\NeedsTeXFormat{LaTeX2e}
\documentclass[a4paper]{amsart}
\usepackage[T1]{fontenc}
\usepackage{lmodern}
\usepackage{amssymb}
\usepackage[all]{xy}
\usepackage[inline,shortlabels]{enumitem}
\usepackage{nicefrac,stmaryrd}
\usepackage{mathtools}
\usepackage[british]{babel}
\usepackage{microtype}

\usepackage{tikz-cd, tikz}
\usepackage[pdftitle={Topological correspondences-I},
pdfauthor={Rohit Dilip Holkar},
  pdfsubject={}
]{hyperref}
\usepackage[lite]{amsrefs}
\usepackage{amsthm, amsfonts}
\usepackage{nicefrac, color}
\usepackage{caption}

\numberwithin{equation}{section}
\theoremstyle{plain}
\newtheorem{theorem}[equation]{Theorem}
\newtheorem{lemma}[equation]{Lemma}
\newtheorem{proposition}[equation]{Proposition}

\newtheorem{corollary}[equation]{Corollary}

\theoremstyle{definition}
\newtheorem{definition}[equation]{Definition}

\theoremstyle{remark}
\newtheorem{remark}[equation]{Remark}

\newtheorem{example}[equation]{Example}


\newcommand*{\defeq}{\mathrel{\vcentcolon=}}

\newcommand{\R}{\mathbb R}
\newcommand{\C}{\mathbb C}

\newcommand*{\base}[1][H]{{#1}^{(0)}}
\newcommand{\nb}{\nobreakdash} 
\newcommand{\7}{\backslash}
\newcommand{\inverse}{^{-1}}
\newcommand{\supp}{\textup{supp}}
\newcommand{\HH}{\mathrm H}
\newcommand{\CC}{\mathrm C}
\newcommand{\homeo}{\approx}
\newcommand{\iso}{\simeq}

 \newcommand*{\dd}{\textup d}

\newcommand{\inpro}[2]{\left\langle#1 \mathbin, #2 \right\rangle}

\newcommand*{\Star}{*\nb-}
\newcommand*{\Bound}{\mathbb B}
\newcommand*{\Comp}{\mathbb K}
\newcommand*{\Cst}{\textup C^*}
\newcommand*{\Cred}{\textup C^*_\textup r}
\newcommand*{\Hils}[1][H]{\mathcal{#1}}
\newcommand*{\Mult}{\mathcal M}
\newcommand{\Ltwo}{\mathcal L^2}

\hyphenation{group-oid group-oids hyper-group-oid hyper-group-oid
  hyper-group hyper-groups Anan-thara-man}

\title[Topological correspondences]{Topological construction of $C^*$\nobreakdash-correspondences for groupoid $C^*$\nobreakdash-algebras}
\author{Rohit Dilip Holkar}
\email{rohit.d.holkar@gmail.com}
\address{Department of Mathematics, Federal University of Santa Catarina, 88.\,040-900, Florianop\'olis, SC, Brazil}

\begin{document}

\maketitle{}

\begin{abstract}
Let $(G,\alpha)$ and $(H,\beta)$ be locally compact groupoids with Haar systems. We define a topological correspondence from $(G,\alpha)$ to $(H,\beta)$ to be a $G$-$H$\nb-bispace $X$ on which $H$ acts properly, and $X$ carries a continuous family of measures which is $H$\nb-invariant and each measure in the family is $G$\nb-quasi invariant. We show that  a topological correspondence produces a $\Cst$\nb-correspondence from $\Cst(G,\alpha)$ to $\Cst(H,\beta)$. We give many examples of topological correspondences. 
\end{abstract}

\tableofcontents{}

\section*{Introduction}
        
A \emph{$\Cst$\nb-algebraic correspondence} $H$ from a 
$\textup{C}^*$\nb-algebra $A$ to~$B$ is an 
$A$-$B$-bimodule which is a Hilbert $B$\nb-module and $A$ acts on
$H$ via the adjointable operators in a non-degenerate fashion. Let $A=\Cst(G,\alpha)$ and $B=\Cst(H,\beta)$ where the ordered pairs $(G,\alpha)$ and $(H,\beta)$ consist of a locally compact groupoid and a Haar system for it. Given a $G$-$H$\nb-bispace~$X$ carrying an $H$\nb-invariant family of measures such that each measure in the family is $G$\nb-quasi-invariant, we show that if the $H$\nb-action is proper, then $C_c(X)$ can be completed into a $\Cst$\nb-correspondence from $\Cst(G,\alpha)$ to $\Cst(H,\beta)$. This work is an extension of some part of my thesis~\cite{mythesis}, where we worked with Hausdorff typologies with certain countability assumptions. In the present work we get rid of the Hausdorffness and the countability hypotheses.

Morita equivalence of $\Cst$\nb-algebras is defined by the
existence of an imprimitivity bimodule, a special kind of
$\Cst$\nb-correspondence. In the well-known
result (\cite{Muhly-Renault-Williams1987Gpd-equivalence}) that a Morita equivalence between two locally compact
groupoids with Haar systems induces a Morita equivalence between the
groupoid $\Cst$\nb-algebras the imprimitivity module is constructed directly from a bispace giving the Morita equivalence of the two groupoids. The Hausdorff case of Morita equivalence between locally compact groupoids is discussed in~\cite{Muhly-Renault-Williams1987Gpd-equivalence}, and a much general and non-Hausdorff situation is studied in~\cite{Renault1985Representations-of-crossed-product-of-gpd-Cst-Alg}*{Definition 5.3}. Which extra structure or conditions are needed for a bispace to give only a $\Cst$\nb-algebraic correspondence instead of a Morita equivalence?

In general, we need a family of measures on the bispace as an extra structure to get started. In the Morita equivalence case, a family of measures on the bispace appears automatically (See Example~\ref{exa:ME-is-top-corr}). The the family of measures must be invariant for the right action and each measure of the family must be quasi-invariant for the left action.  We also need that the right action is proper.

We use the \Star{}category of a locally compact groupoids introduced in~\cites{Renault1985Representations-of-crossed-product-of-gpd-Cst-Alg, Holkar-Renault2013Hypergpd-Cst-Alg} to prove that certain actions and a bilinear form are well-defined (Equations~\ref{def:left-right-action} and~\ref{def:inner-product}). The process of constructing a $\Cst$\nb-correspondence from a topological correspondence is divided into two main parts: the first part is to construct the
Hilbert module and the second one is to define the representation of the left groupoid $\Cst$\nb-algebra on this Hilbert module. For the first part, we use
the representation theory of groupoids and the transverse measure
theory introduced by Renault
in~\cite{Renault1985Representations-of-crossed-product-of-gpd-Cst-Alg}. 
In the second part, our motivation and techniques are derived from the theory of quasi-invariant measures for locally compact groups (\cite{Folland1995Harmonic-analysis-book}*{Section 2.6}). 

As a $\Cst$\nb-correspondence from a $\Cst$\nb-algebra $A$ to $B$ induce a representation of $B$ to that of $A$, a topological correspondence from a locally compact groupoid with a Haar system $(G,\alpha)$ to $(H,\beta)$ induce a representation of $H$ to that of $G$. Renault proves this in~\cite{Renault2014Induced-rep-and-Hpgpd}.
\smallskip

A locally compact, Hausdorff space is a locally compact groupoid with a Haar system, and so is a locally compact group. A well-known fact about groupoid equivalence is that two spaces are equivalent if and only if they are homeomorphic and two groups are equivalent if and only if they are isomorphic. But since any continuous map between spaces gives a topological correspondence and so does a group homomorphism, a topological correspondence is far more general than an equivalence.

We give many examples of topological correspondences, most of which are the analogues of the standard examples of $\Cst$\nb-correspondences.

In Example~\ref{exa:cont-function-as-corr}, we show that a continuous map $f\colon X\to Y$ between spaces gives a topological correspondence from $Y$ to $X$. Example~\ref{exa:gp-homo-as-corr} shows that a continuous group homomorphism $\phi\colon G\to H$ gives a topological correspondences from $G$ to $H$.  Theorem~\ref{thm:mcorr-gives-ccorr} gives that the topological correspondences, the one from $Y$ to $X$ and  the one from $G$ to $H$, produce $\Cst$\nb-correspondences from $C_0(Y)$ to $C_0(X)$ and $\Cst(G)$ to $\Cst(H)$, respectively. It is easy to see that the $\Cst$\nb-correspondence from $C_0(Y)$ to $C_0(X)$ is exactly the one give by the \Star{}homomorphism $f^*\colon  C_0(Y) \to \Mult(C_0(X))$ as in the theory of commutative $\Cst$\nb-algebras. However, it is not equally easy to see that the $\Cst$\nb-correspondence given by the group homomorphism $\phi$ agrees with the one which is induced by the \Star{}homomorphism $\phi_*\colon \Cst(G)\to \Cst(H)$. 

 Example~\ref{exa:proper-gp-homo-as-corr} shows that if the group homomorphism in Example~\ref{exa:gp-homo-as-corr} is a proper map, then we get a topological correspondence from $G$ to $H$.

Let $E^0$ and $E^1$ be locally compact, Hausdorff and second countable spaces,
and let $s,r\colon E^{1}\to E^0$ be continuous maps. Let 
$\lambda=\{\lambda_e\}_{e\in E^0}$ be a continuous family of measures along
$s$. By applying the definition of a topological correspondence it is
 straightforward to check that $s,r$ and $\lambda$ give a
topological correspondence from $E^0$ to itself.  Muhly and
Tomforde (\cite{Muhly-Tomforde-2005-Topological-quivers}*{Definition 3.1}) call this correspondence a \emph{topological quiver}. They construct a $\Cst$\nb-correspondence associated to a topological quiver in
\cite{Muhly-Tomforde-2005-Topological-quivers}*{Section 3.1} and the
construction in~\cite{Muhly-Tomforde-2005-Topological-quivers} is exactly the construction of a $\Cst$\nb-correspondence from a topological correspondence. Muhly and Tomforde define the $\Cst$\nb-algebra associated to a topological quiver (\cite{Muhly-Tomforde-2005-Topological-quivers}*{Definition 3.17}) which includes a vast class of $\Cst$\nb-algebras: graph $\Cst$\nb-algebras, $\Cst$\nb-algebras of topological graphs, $\Cst$\nb-algebras of branched coverings, $\Cst$\nb-algebras associated with topological relations are all associated to a topological quiver \cite{Muhly-Tomforde-2005-Topological-quivers}*{Section 3.3}.

Topological quivers justify our use of families of measures in the definition of a topological correspondence. At a first glance, the families of measures and their quasi-invariance for the left action might look artificial. However, as discussed on page~\pageref{para:use-of-adjo-funct}, the quasi-invariance of families of measures is natural to ask for.

We show that the notion of correspondences introduced by Macho Stadler and O'uchi (\cite{Stadler-Ouchi1999Gpd-correspondences}), the generalised morphisms introduced by Buneci and Stachura (\cite{Buneci-Stachura2005Morphisms-of-lc-gpd}) are topological correspondences. See Examples~\ref{exm:Stadler-Ouchi-correspondence} and~\ref{exa:buneci-stachura}, respectively.

 In~\cite{Tu2004NonHausdorff-gpd-proper-actions-and-K}, Tu defines locally proper generalised homomorphism for locally compact groupoids with Hausdorff space of units. Example~\ref{exm:Stadler-Ouchi-correspondence}, which shows that a correspondence in the sense of Macho Stadler and O'uchi is a topological, also shows that a locally proper generalised homomorphism is a topological correspondence. However, Tu proves that a locally proper generalised homomorphism induces a $\Cst$\nb-correspondence between the reduced $\Cst$\nb-algebras of the groupoids (\cite{Tu2004NonHausdorff-gpd-proper-actions-and-K}*{Proposition 2.28}). Whereas our result involves the full $\Cst$\nb-algebras of the groupoids. We know that the result of Tu holds for topological correspondences when the bispace involved in the topological correspondence is Hausdorff and second countable, and the right action is amenable~\cite{mythesis}*{Proposition 2.3.4}.
\smallskip

Following is the sectionwise description of the contents.

\paragraph{Section 1 (Preliminaries):}
In this section we mention our conventions, notation, and rewrite some standard definitions and results.

A notion of cohomology for Borel groupoids is introduced in~\cite{Westman1969Gpd-cohomology} by Westman. In~\cite{Renault1980Gpd-Cst-Alg}*{Chapter 1, page 14}, Renault discusses a continuous version of the same cohomology. We need a groupoid equivariant continuous version of this cohomology. For this purpose we define action of a groupoid on another groupoid and then define the equivariant cohomology for Borel and continuous groupoids.
\smallskip

\paragraph{Section 2 (Topological correspondences):}
This section contains the main construction. Immediately after the definition of topological correspondence (Definition~\ref{def:correspondence}) we discuss the role of the adjoining function.

 Let $(X,\lambda)$ be a topological correspondence from $(G,\alpha)$ to $(H,\beta)$. Then we write the formulae of the actions of $C_c(G)$ and $C_c(H)$ on $C_c(X)$. These actions make $C_c(X)$ into a $C_c(G)$-$C_c(H)$\nb-bimodule. We also define the formula of a $C_c(H)$\nb-valued bilinear map on $C_c(X)$ which, we latter prove, is a positive bilinear map.
We complete this setup to get a $\Cst$\nb-correspondence. The process, as mentioned earlier, is divided into two parts:
constructing a $\Cst(H,\beta)$\nb-Hilbert module $\Hils(X)$ and defining a
representation of $\Cst(G,\alpha)$ on this Hilbert module. 

We advise the reader to jump to Section~\ref{cha:examples-theory} after the discussion that follows Definition~\ref{def:correspondence} to have a look at some examples.
\smallskip

\paragraph{Section 4 Examples:}

This section contains examples of topological
correspondences.

\section{Preliminaries}
\label{cha:loc-cpt-hausdorff-gpd}
\subsection{Groupoids}
\label{sec:gpd-prelim}

 The reader should be familiar with the theory of locally compact groupoids (\cites{Anantharaman-Renault2000Amenable-gpd,Renault1980Gpd-Cst-Alg,Renault1985Representations-of-crossed-product-of-gpd-Cst-Alg,PatersonA1999Gpd-InverseSemigps-Operator-Alg} and~\cite{Tu2004NonHausdorff-gpd-proper-actions-and-K}). 

A groupoid $G$ is a small category in which every arrow is invertible. Except a few instances, we denote the space of arrows of groupoid $G$ by the letter $G$ itself, rather than the more precise symbol $G^{(1)}$. We denote the space of units of $G$ by $\base[G]$. The source and range maps are denoted by $s_G$ and $r_G$, respectively. The inverse of an element $\gamma\in G$ is denoted by $\gamma\inverse$. There are instances when we need to write $\gamma\mapsto\gamma\inverse$ as a function $G\to G$ and then we denote the function by $\textup{inv}_G$.

A pair $(\gamma,\gamma')\in G\times G$ is called composable if $s_G(\gamma)=r_G(\gamma')$. Sometimes we abuse the language by saying `$\gamma,\gamma'\in G$ are composable' by which we mean that the pair $(\gamma,\gamma')$ is composable. For $A,B\subseteq \base[G]$ define
\begin{align*}
G^A&=\{\gamma\in G: r_G(\gamma)\in A \} = r_G\inverse(A),\\
G_A&=\{\gamma\in G: s_G(\gamma)\in A\} = s_G\inverse(A) \textnormal{ and }\\
G^A_B&=G^A\cap G_B=\{\gamma\in G: r_G(\gamma)\in A \textup{ and }
  s_G(\gamma)\in B\}.
\end{align*}
When $A=\{u\}$ and $B=\{v\}$ are singletons, we write $G^u$, $G_v$ and
$G^u_v$ instead of $G^{\{u\}}$, $G_{\{v\}}$ and $G^{\{u\}}_{\{v\}}$, 
respectively. For $u\in\base[G]$, $G^u_u$ is a group. It is 
called the \emph{isotropy} group at $u$.

A topological groupoid and measurable groupoid have their standard meanings (\cite{Anantharaman-Renault2000Amenable-gpd}).

We call a subset $A\subseteq X$ of a topological space $X$ \emph{quasi-compact} if every open cover of $A$ has a finite subcover. And $A$ is called \emph{compact} if it is quasi-compact and Hausdorff. The space $X$ is called locally compact if every point $x\in X$ has a compact neighbourhood. All the spaces considered in this article are locally compact. For a locally compact space $X$ by $C_c(X)_0$ we denote the set of functions $f$ on $X$ such that $f\in C_c(V)$ where $V\subseteq X$ is open Hausdorff, and $f$ is extended outside $V$ by $0$ (See \cite{Tu2004NonHausdorff-gpd-proper-actions-and-K}*{Section 4}).\label{page:Cc} By $C_c(X)$ we denote the linear span of functions in $C_c(X)_0$. The functions in $C_c(X)$ need not be continuous on $X$ but they are Borel. Furthermore, if $\mu$ is a positive $\sigma$\nb-finite Radon measure on $X$, then $C_c(X)\subseteq \Ltwo(X,\mu)$ is dense.

As in~\cite{Renault1985Representations-of-crossed-product-of-gpd-Cst-Alg}, we call a topological groupoid $G$ locally compact if $G$ is a locally compact topological space and $\base[G]\subseteq G$ is Hausdorff. In this case $r_G\inverse(u)\subseteq G$ (and equivalently, $s_G\inverse(u)\subseteq G$) are Hausdorff for all $u\in\base[G]$.

If $X$ and $Y$ are topological spaces, $X\homeo Y$ means $X$ and $Y$ are homeomorphic. If $G$ and $H$ are group(oids)s, then $G\iso H$ means $G$ and $H$ are isomorphic via a group(oid) homomorphism. In group case, this means $G$ and $H$ are isomorphic. All the measures we work with are positive, Radon and $\sigma$\nb-finite.

 For groupoid actions we do not assume that the momentum maps are open
 or surjective. However, it is well-known that for a locally compact groupoid with a Haar system the source map (equivalently the range map) is automatically open. We need that each measure in a family of measures along a continuous open map $f\colon  X\to Y$ is non-zero, but it need not have full support.

\subsection{Proper actions and families of measures}
\label{sec:prop-act-and-measures}

Since we shall not come across any case where there are more than one different left (or right) action of a groupoid~$G$ on a space $X$, we denote the momentum map by $r_X$ (respectively, $s_X$). When we write `$X$ is a left (or right) $G$\nb-space'  without specifying the momentum map, the above convention will be tacitly assumed and then in such instances the momentum map is $r_X$ (respectively, $s_X$).

 Let $A\subseteq \base[G]$. For a left $G$\nb-space $X$ and a right $G$\nb-space $Y$ we define $X^A, X^u, Y_A$ and $Y_u$ similar to $G^A, G^u, G_A$ and $G_u$.

Let $X, Y$ and $Z$ be spaces, and let $f\colon X\to Z$ and $g\colon Y\to Z$ be maps. We denote the fibre product of $X$ and $Y$ over $Z$ by $X\times_{f,Z,g}Y$. When there is no confusion about the maps $f$ and $g$, we simply write $X\times_{Z}Y$ instead of $X\times_{f,Z,g}Y$.

 Let $G$ be a groupoid and $X$ a left $G$\nb-space. By $G\ltimes X$ we denote the transformation groupoid. Its space of arrows is $G\times_{s_G,\base[G],r_X}X$, which we prefer denoting by $G\times_{\base[G]}X$. Recall that a Haar system on $G$ induces a Haar system on $G\ltimes X$.

 Let $G$ be a locally compact groupoid with open range map and $X$ a locally compact $G$\nb-space. Then the quotient map $X\to X/G$ is open. If the action of $G$ is proper, then $X/G$ is locally compact. If $X$ is Hausdorff (or Hausdorff and second countable) the $X/G$ is also Hausdorff (Hausdorff and second countable, respectively).

Let $X$ be a left $G$\nb-space. For subsets $K\subseteq G$ and $A\subseteq X$ with $s_G(K)\cap r_X(A) \neq \emptyset$, define $K\!A = \{\gamma x: \gamma\in K, x\in A \textup{ and } (\gamma, x)\in G\times_{\base[G]}X\}$. If $s_G(K)\cap r_X(A) = \emptyset$, then we define $AK=\emptyset$. By an abuse of notation, for $x\in X$ we write $Kx$ instead of $K\{x\}$. The meaning of $\gamma A$ for $\gamma\in G$ is similar. For a right action, we define $A\!K$, $xK$ and $A\gamma$ similarly.

Let $G$ and $H$ be groupoids and $X$ be a space on which $G$ acts from the left and $H$ from the right. If for all $\gamma\in G$ , $x\in X$ and $\eta\in H$ with $s_G(\gamma)=r_X(x)$ and $s_X(x)=r_H(\eta)$ we have $s_X(\gamma x)=s_X(x)$, $r_X(x\eta)=r_X(x)$, and \[(\gamma x)\eta=\gamma(x\eta), \] then we call $X$ a $G$-$H$\nb-bispace.

\begin{definition}[Invariant continuous family of measures]\label{def:gpd-invariant-measure-family}
  Let $H$ be a locally compact groupoid, and let $X$ and $Y$ be locally compact right $H$\nb-spaces. Let $\pi\colon  X\to Y$ be an $H$\nb-equivariant continuous map. An
  \emph{$H$\nb-invariant continuous family of measures} along $\pi$ is a family of
  Radon measures $\lambda=\{\lambda_y\}_{y\in Y}$ such
  that:
  \begin{enumerate}[label=\roman*)]
  \item each $\lambda_y$ is defined on $\pi\inverse(y)$;
  \item (invariance) for all composable pairs $(y,\eta) \in
    Y\times_{\base}H$, the condition $\lambda_{y}\eta=\lambda_{y\eta}$ holds;
  \item (continuity condition) for $f\in C_c(X)$ the function 
    $\Lambda(f)(y) \defeq \int_{\pi\inverse(y)}
    f\,\dd\lambda_{y}$ on $Y$ is continuous.
  \end{enumerate}
\end{definition}
We clarify that in the above definition the measure $\lambda_{y}\eta$ is given by
$\int f\,\dd \lambda_{y}\eta=\int f(x\eta)\,\dd\lambda_y(x)$ for $f\in C_c(X)$.

If for each $y\in Y$, $\supp(\lambda_y) =\pi\inverse(y)$, we say the family of measures $\lambda$ has \emph{full support}. If there is
a continuous function $f$ on $X$ with  $\Lambda(f)=1$ on
$\pi(X)$, we say that $\lambda$ is \emph{proper}. Lemma 1.1.2 in
\cite{Anantharaman-Renault2000Amenable-gpd} says that, in the continuous
case, $\lambda$ is proper if and only if $\lambda_y\neq 0$ for all
$y\in Y$. Thus if $\lambda$ is continuous and has full support, then
$\lambda$ is proper. In the rest of the article we assume that every family of measures we work with is proper.

Let $\textup{Pt}$ be the trivial point group(oid). If $X$ and $Y$ are spaces and $\pi\colon  X\to Y$ is a continuous map,
then $\pi$ is a $\textup{Pt}$\nb-equivariant map between
$\textup{Pt}$\nb-spaces. A continuous $\textup{Pt}$\nb-invariant
family of measures along $\pi$ is simply called a \emph{continuous family of measures along} $\pi$.

We denote families of measures by small Greek letters. For a
given family of measures, the corresponding integration function that appears
in the continuity condition in
Definition~\ref{def:gpd-invariant-measure-family} will be denoted by
the Greek upper case letter used to denote the family of measures. For
$\alpha$, $\beta$ and $\mu$ it will be $A$, $B$ and $M$,
respectively. 
\begin{definition}
\label{def:haar-system-and-more}
\begin{enumerate}
  \item Let $H$ be a groupoid, $X$ a left $H$\nb-space. An
    $H$\nb-invariant continuous family of measures along the momentum
    map $r_X$ is called a \emph{left $H$\nb-invariant continuous
      family of measures on $X$}. A \emph{right $H$\nb-invariant
      continuous family of measures on $X$} is defined analogously.
  \item For a groupoid $H$, a \textit{Haar system} on $H$ is a left
    $H$\nb-invariant continuous family of
    measures with full support on $H$ for the \textit{left multiplication} action of $H$ on itself.
  \end{enumerate}
\end{definition}

\begin{lemma}
  \label{cor:lemma:r-systems-makes-r-open-1}
Let $H$ be a groupoid, let $\pi\colon  X\to Y$ be a continuous
  $H$\nb-map between the $H$\nb-spaces $X$ and $Y$ and let $\lambda$
  be a continuous family of measures along $\pi$. If $\lambda_y$ has
  full support for all $y\in \pi(X)$, then $\pi$ is an open
  map onto its image.
\end{lemma}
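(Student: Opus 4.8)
The plan is to prove the stronger local statement: for every open $U\subseteq X$ and every $x_0\in U$ the image $\pi(U)$ contains a neighbourhood of $y_0\defeq\pi(x_0)$ that is relatively open in $\pi(X)$. Granting this, every point of $\pi(U)$ has a relatively open neighbourhood inside $\pi(U)$, so $\pi(U)$ is open in $\pi(X)$ and $\pi$ is open onto its image. The engine of the argument is to manufacture a nonnegative test function $f\in C_c(X)$ supported inside $U$ whose fibrewise integral $\Lambda(f)$ is strictly positive at $y_0$; the continuity of $\Lambda(f)$ then spreads this positivity to a whole neighbourhood of $y_0$, while the full support hypothesis lets me read positivity of $\Lambda(f)(y)$ back as the existence of a point of $U$ in the fibre over $y$. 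The $H$\nb-equivariance plays no role and can be ignored throughout.

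First I would construct $f$. Since $X$ is locally compact, $x_0$ has a compact, hence Hausdorff, neighbourhood; intersecting its interior with $U$ gives an open, locally compact, Hausdorff neighbourhood $W$ with $x_0\in W\subseteq U$. Urysohn's lemma on $W$ yields a continuous $g\colon W\to[0,1]$ with $g(x_0)=1$ and compact support $K\subseteq W$; extending $g$ by zero produces $f\in C_c(X)_0\subseteq C_c(X)$ with $f\ge 0$, $f(x_0)=1$, and $\{f\neq 0\}\subseteq K\subseteq U$. To see $\Lambda(f)(y_0)>0$, observe that $\{f>\tfrac12\}$ is open in $X$, being an open subset of the chart $W$, and that it meets $\pi\inverse(y_0)$ in a nonempty set relatively open in $\pi\inverse(y_0)$ and containing $x_0$. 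Since $x_0\in\supp(\lambda_{y_0})=\pi\inverse(y_0)$ by the full support hypothesis, this set has positive $\lambda_{y_0}$\nb-measure, so $\Lambda(f)(y_0)=\int_{\pi\inverse(y_0)}f\,\dd\lambda_{y_0}\ge\tfrac12\,\lambda_{y_0}\bigl(\{f>\tfrac12\}\cap\pi\inverse(y_0)\bigr)>0$.

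Next I would set $V\defeq\{y\in Y:\Lambda(f)(y)>0\}$, which is open by the continuity condition of Definition~\ref{def:gpd-invariant-measure-family} and contains $y_0$. I claim $V\cap\pi(X)\subseteq\pi(U)$: if $y\in V\cap\pi(X)$, then $\int_{\pi\inverse(y)}f\,\dd\lambda_y>0$ with $f\ge 0$ forces $\lambda_y(\{f>0\})>0$, so there is some $x\in\pi\inverse(y)$ with $f(x)>0$; then $x\in\{f\neq 0\}\subseteq U$, whence $y=\pi(x)\in\pi(U)$. Thus $V\cap\pi(X)$ is the sought relatively open neighbourhood of $y_0$ inside $\pi(U)$, which finishes the proof.

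I expect the only delicate point to be bookkeeping around the non\nb-Hausdorff topology of $X$: functions in $C_c(X)$ are merely Borel, not globally continuous, so one must check that the bump function $f$ is genuinely continuous on the open Hausdorff chart $W$ carrying it, and that a superlevel set such as $\{f>\tfrac12\}$ is honestly open in $X$---which holds precisely because $W$ is open and $f$ restricts continuously there. Apart from this, local compactness is used only to extract the chart $W$ and the full support hypothesis only at the single point $y_0$, so requiring full support merely for $y\in\pi(X)$ is exactly what the argument needs.
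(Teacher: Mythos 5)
Your argument is correct and is essentially the same as the paper's: the paper simply refers to Proposition 2.2.1 of Paterson's book, whose proof is exactly this bump-function argument (positivity of $\Lambda(f)$ at $\pi(x_0)$ via full support, continuity of $\Lambda(f)$ to get an open set $V$, and $V\cap\pi(X)\subseteq\pi(U)$ since $f$ vanishes off $U$). Your extra care about continuity of $f$ on an open Hausdorff chart in the possibly non-Hausdorff setting is a worthwhile addition that the citation glosses over.
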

\begin{proof}
  Consider the map $\pi\colon  X\to \pi(X)$ and then the proof is similar to the one of Proposition 2.2.1 in \cite{PatersonA1999Gpd-InverseSemigps-Operator-Alg}.
\end{proof}

\begin{corollary}
\label{cor:range-source-maps-for-haar-gpd-open}
  If $(H,\beta)$ is a groupoid with a Haar system, then the range and
  source maps are open.
\end{corollary}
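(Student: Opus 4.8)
The plan is to deduce this directly from Lemma~\ref{cor:lemma:r-systems-makes-r-open-1}, whose hypotheses the notion of a Haar system was tailored to meet. First I would unwind the definitions. By Definition~\ref{def:haar-system-and-more}, $\beta=\{\beta^u\}_{u\in\base[H]}$ is a continuous family of measures with full support for the left multiplication action of $H$ on itself; the momentum map of that action is the range map $r_H$. Hence each $\beta^u$ is a measure on $r_H\inverse(u)=H^u$ with $\supp(\beta^u)=r_H\inverse(u)$, and $\beta$ is precisely a continuous family of measures along $r_H\colon H\to\base[H]$. Viewing $H$ as a left $H$\nb-space by left multiplication and $\base[H]$ as a left $H$\nb-space in the canonical way, the identity $r_H(\gamma\eta)=r_H(\gamma)=\gamma\cdot r_H(\eta)$ shows that $r_H$ is a continuous $H$\nb-map, so the ambient structure required by the lemma is in place.

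Next I would record that $r_H$ is surjective: every unit $u\in\base[H]$ is itself an arrow with $r_H(u)=u$, so $r_H(H)=\base[H]$; in particular $\beta^u$ has full support for every $u\in r_H(H)$. Applying Lemma~\ref{cor:lemma:r-systems-makes-r-open-1} with $X=H$, $Y=\base[H]$, $\pi=r_H$ and $\lambda=\beta$ then yields that $r_H$ is open onto its image, and since that image is all of $\base[H]$, the range map $r_H$ is open.

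Finally, for the source map I would reduce to the range map through inversion. Since $s_H=r_H\circ\textup{inv}_H$ and $\textup{inv}_H\colon H\to H$ is a homeomorphism (it is continuous and equal to its own inverse), the map $s_H$ is the composition of a homeomorphism with an open map and is therefore open. One could instead push $\beta$ forward along $\textup{inv}_H$ to obtain a continuous full-support family of measures along $s_H$ and invoke Lemma~\ref{cor:lemma:r-systems-makes-r-open-1} a second time, but the inversion argument is shorter.

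I expect no genuine obstacle here, as the statement is essentially a specialisation of the preceding lemma. The only points that demand care are verifying that a Haar system is literally a full-support continuous family of measures along $r_H$, and noting the surjectivity of $r_H$, which is what upgrades the lemma's conclusion from \emph{open onto its image} to \emph{open}.
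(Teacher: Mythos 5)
Your proof is correct and follows the same route as the paper: apply Lemma~\ref{cor:lemma:r-systems-makes-r-open-1} to the Haar system viewed as a full-support continuous family of measures along $r_H$, note surjectivity of $r_H$ to pass from ``open onto its image'' to ``open'', and deduce openness of $s_H$ from $s_H=r_H\circ\textup{inv}_H$ with $\textup{inv}_H$ a homeomorphism. The paper's own proof is just a terser version of exactly this argument.
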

\begin{proof}
  Lemma~\ref{cor:lemma:r-systems-makes-r-open-1} implies that the
  range map $r_H$ is open. Since $s_H=r_H\circ \textup{inv}_H$ and $\textup{inv}_H$ is a
  homeomorphism, $s_H$ is open.
\end{proof}
 We need the following two lemmas from~\cite{Renault1985Representations-of-crossed-product-of-gpd-Cst-Alg}.
\begin{lemma}[Lemme 1.1,
  \cite{Renault1985Representations-of-crossed-product-of-gpd-Cst-Alg}]
\label{lemma:french-1.1}
  Let $X$ and $Y$ be spaces, let $\pi\colon  X\to Y$ be an open surjection
  and let $\lambda$ be a family of measures with full support along $\pi$. For every open
  $U\subseteq X$ and for a non-negative function $g \in C_c(\pi(U))$,
  there is a non-negative function $f\in C_c(U)$ with $\Lambda(f) =g$. 
\end{lemma}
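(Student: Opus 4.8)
The plan is to reduce the global statement to a local one and then patch it together with a normalisation. First I would establish a \emph{local positivity} step. Fix $y_0\in\supp(g)\subseteq\pi(U)$ and, using surjectivity, pick $x_0\in U$ with $\pi(x_0)=y_0$. Since $X$ is locally compact, $x_0$ has an open Hausdorff neighbourhood $V$ with $x_0\in V\subseteq U$, so a Urysohn argument inside $V$ yields a non-negative $h\in C_c(V)_0\subseteq C_c(U)$ with $h(x_0)>0$. Because $\lambda$ has full support, $\supp(\lambda_{y_0})=\pi\inverse(y_0)$, whence the open set $\{h>0\}\cap\pi\inverse(y_0)$, which contains $x_0$, has positive $\lambda_{y_0}$\nb-measure; therefore $\Lambda(h)(y_0)=\int_{\pi\inverse(y_0)}h\,\dd\lambda_{y_0}>0$. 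By the continuity condition $\Lambda(h)$ is continuous, so $W_{y_0}\defeq\{y:\Lambda(h)(y)>0\}$ is an open neighbourhood of $y_0$.

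Next I would globalise by compactness. The sets $\{W_{y_0}\}_{y_0\in\supp(g)}$ form an open cover of the compact set $\supp(g)$; extracting a finite subcover $W_{y_1},\dots,W_{y_n}$ with associated functions $h_1,\dots,h_n$, set $H\defeq\sum_{i=1}^n h_i\in C_c(U)$, which is non-negative. Then $\Lambda(H)=\sum_i\Lambda(h_i)$ is continuous and non-negative, and on the open set $W\defeq\bigcup_i W_{y_i}\supseteq\supp(g)$ at least one summand is strictly positive, so $\Lambda(H)>0$ on $W$.

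The final step is the normalisation. On $W$ the function $g/\Lambda(H)$ is continuous because the denominator does not vanish, and it vanishes on the open set $\{g=0\}$; since these two open sets cover $Y$ and agree on their overlap, they glue to a continuous function $\psi\in C_c(Y)$ with $\supp(\psi)\subseteq\supp(g)\subseteq\pi(U)$. I would then define $f\defeq H\cdot(\psi\circ\pi)$. Multiplying the compactly supported $H$ by the continuous bounded function $\psi\circ\pi$ keeps $f$ non-negative and in $C_c(U)$, and pulling the scalar $\psi(y)$ out of the fibre integral gives $\Lambda(f)(y)=\psi(y)\,\Lambda(H)(y)=g(y)$ for $y\in W$, while both sides vanish off $\supp(g)$; hence $\Lambda(f)=g$.

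The step I expect to be the main obstacle is verifying that $\psi$, and hence $f$, genuinely lie in the spaces $C_c(Y)$ and $C_c(U)$ in the non\nb-Hausdorff sense used here, where these are built from $C_c(\,\cdot\,)_0$ on open Hausdorff charts. One must check that the gluing of $g/\Lambda(H)$ across the boundary of $W$ is legitimate and that multiplying a generator of $C_c(U)$ supported in a Hausdorff chart $V_i$ by $\psi\circ\pi$ stays within that same chart, so that the product remains in $C_c(V_i)_0$. The full\nb-support hypothesis is precisely what powers the strict inequality $\Lambda(h)(y_0)>0$ in the local step, and the openness of $\pi$, which makes $\pi(U)$ open, is what renders the assertion about $C_c(\pi(U))$ meaningful in the first place.
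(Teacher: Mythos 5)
The paper does not actually prove this lemma: it is imported verbatim as Lemme~1.1 of \cite{Renault1985Representations-of-crossed-product-of-gpd-Cst-Alg}, so the only proof to compare yours with is Renault's, and your argument is essentially that one — local positivity of $\Lambda(h)$ at a point of $\supp(g)$ forced by full support, a compactness argument producing $H\in C_c(U)$ with $\Lambda(H)>0$ on an open $W\supseteq\supp(g)$, and the normalisation $f=H\cdot\bigl((g/\Lambda(H))\circ\pi\bigr)$, with $\Lambda(f)=g$ following because the normalising factor is constant on fibres. Two small points: the preimage $x_0\in U$ of $y_0$ comes from $y_0\in\supp(g)\subseteq\pi(U)$ rather than from surjectivity of $\pi$; and the set you call $\{g=0\}$ need not be open, since in this paper's framework elements of $C_c(\pi(U))$ need not be continuous on $\pi(U)$ — the open set you want is $\pi(U)\setminus\supp(g)$, which together with $W$ covers $\pi(U)$, and this repair costs nothing.

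The obstacle you single out at the end is the genuine one, and it is worth separating the two cases. When $X$ and $Y$ are Hausdorff your proof is complete as written: $\Lambda(H)$ is globally continuous by the continuity condition in Definition~\ref{def:gpd-invariant-measure-family} and strictly positive on the compact set $\supp(g)$, so $\psi$ is continuous and bounded and $H\cdot(\psi\circ\pi)$ lies in $C_c(U)$. In the non-Hausdorff setting actually used here, where $C_c(U)$ is the span of the spaces $C_c(V)_0$ over open Hausdorff charts $V\subseteq U$, the product $h_i\cdot(\psi\circ\pi)$ is supported in the chart $V_i$ of $h_i$, but its continuity on $V_i$ requires $\psi$ to be continuous on $\pi(V_i)$, and this does not follow formally when $g$ is only a finite sum of chart-supported functions whose individual supports may leave $W$ (they can cancel outside $\supp(g)$). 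That verification is precisely the content of the non-Hausdorff $C_c$ calculus in the cited source; you should either carry it out or explicitly defer to the citation, which is what the paper itself does by stating the lemma without proof.
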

\begin{lemma}[Lemme 1.2,
  \cite{Renault1985Representations-of-crossed-product-of-gpd-Cst-Alg}]
\label{lemma:french-1.2}
  Let $X$, $Y$ and $Z$ be spaces, let $\pi$ and $\tau$ be open surjections
  from $X$ and  $Y$ to $Z$, respectively. Let $\pi_2$ denote the
  projection  from the fibre product $X\times_ZY$ onto the second factor 
  $Y$. Assume that for each $z\in Z$, there is a measure $\lambda_z$
  on $\pi\inverse(z)$. For each $y\in Y$ define the measure
  ${\lambda_2}_y=\lambda_{\tau(y)}\times \delta_y$, where
  $\delta_y$ is the point-mass at $y$. Then $\lambda$ is
  continuous if and only if\; $\lambda_2$ is continuous.
\end{lemma}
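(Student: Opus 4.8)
The plan is to reduce both implications to the behaviour of the integration function of $\lambda_2$ on product functions. First I would unravel the definitions: since $\pi_2\inverse(y)=\pi\inverse(\tau(y))\times\{y\}$ and ${\lambda_2}_y=\lambda_{\tau(y)}\times\delta_y$, the integration function $\Lambda_2$ attached to $\lambda_2$ is
\[
\Lambda_2(g)(y)=\int_{\pi\inverse(\tau(y))} g(x,y)\,\dd\lambda_{\tau(y)}(x),\qquad g\in C_c(X\times_Z Y).
\]
Writing $(f\otimes h)(x,y)=f(x)h(y)$ for $f\in C_c(X)$, $h\in C_c(Y)$ and restricting to the fibre product (which lands in $C_c(X\times_Z Y)$, as one checks on the $C_c(\cdot)_0$ generators), the key identity I would record is
\[
\Lambda_2(f\otimes h)(y)=h(y)\,\Lambda(f)(\tau(y)).
\]
Both directions are then read off this formula, the reverse one being the easier.

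For ``$\lambda_2$ continuous $\Rightarrow$ $\lambda$ continuous'' I would fix $f\in C_c(X)$ and show $\Lambda(f)$ is continuous on $Z$. Around any $y_0\in Y$ I pick $h\in C_c(Y)$ with $h(y_0)\neq 0$; on the open set $\{h\neq 0\}$ the identity exhibits $\Lambda(f)\circ\tau$ as the quotient of the continuous function $\Lambda_2(f\otimes h)$ by $h$, so $\Lambda(f)\circ\tau$ is continuous on all of $Y$. Then I would invoke that $\tau$ is a continuous open surjection: for open $U\subseteq\C$ one has $\Lambda(f)\inverse(U)=\tau\bigl((\Lambda(f)\circ\tau)\inverse(U)\bigr)$ by surjectivity, and this set is open because $\tau$ is an open map, whence $\Lambda(f)$ is continuous. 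Here surjectivity of $\tau$ is exactly what lets me find a suitable $h$ over every point of $Z$, and openness is what pushes continuity down from $Y$ to $Z$.

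The forward implication ``$\lambda$ continuous $\Rightarrow$ $\lambda_2$ continuous'' is where the work lies, and I expect it to be the main obstacle: the integrand $g(\,\cdot\,,y)$, the fibre $\pi\inverse(\tau(y))$ and the measure $\lambda_{\tau(y)}$ all move with $y$ and live on disjoint fibres for distinct values of $\tau(y)$, so no direct comparison across fibres is available. My plan is to approximate a general $g$ by the self\nb-adjoint algebra $\mathcal A$ of finite sums of restricted products $f\otimes h$. After reducing by linearity to a $g$ supported on a Hausdorff compact set $K$, with coordinate projections $K_X\subseteq X$ and $K_Y\subseteq Y$, I would choose cutoffs $\chi_X\in C_c(X)$, $\chi_Y\in C_c(Y)$ equal to $1$ near $K_X$ and $K_Y$, so that $\chi=\chi_X\otimes\chi_Y\in\mathcal A$ serves as a local unit for $g$. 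Since $\mathcal A$ separates points, a Stone--Weierstrass argument on a compact Hausdorff neighbourhood of $K$ yields $a_n\in\mathcal A$ with $\chi a_n\to g$ uniformly and $\supp(\chi a_n)\subseteq\supp\chi$; each $\Lambda_2(\chi a_n)$ is continuous by the product formula.

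Finally I would transfer continuity across this limit using the hypothesis on $\lambda$ quantitatively. With $L\subseteq X$ the projection of $\supp\chi$ to $X$ and $\phi\in C_c(X)$ chosen with $\phi\geq 0$ and $\phi\geq 1$ on $L$, the bound
\[
\abs{\Lambda_2(\chi a_n)(y)-\Lambda_2(g)(y)}\leq\Bigl(\sup\abs{\chi a_n-g}\Bigr)\,\Lambda(\phi)(\tau(y))
\]
holds because the integrand is supported in $L$ in the $x$\nb-variable. As $\lambda$ is continuous, $\Lambda(\phi)$ is continuous, hence bounded on $\tau(C)$ for each compact $C\subseteq Y$; therefore $\Lambda_2(\chi a_n)\to\Lambda_2(g)$ uniformly on $C$, and a uniform limit of continuous functions is continuous. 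Since $Y$ is locally compact, this yields continuity of $\Lambda_2(g)$ everywhere. The only delicate bookkeeping I foresee, beyond the Stone--Weierstrass step, is keeping the cutoffs and approximants inside the Hausdorff charts used to define $C_c$ in the non\nb-Hausdorff setting, which is routine given the reduction to $C_c(\cdot)_0$.
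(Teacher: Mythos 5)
The paper does not actually prove this lemma --- it is imported verbatim from Renault's article (Lemme 1.2) with no argument given --- so there is no internal proof to compare against. Your argument is correct and is essentially the standard proof behind Renault's lemma: the identity $\Lambda_2(f\otimes h)(y)=h(y)\,\Lambda(f)(\tau(y))$, division by a non-vanishing $h$ together with openness and surjectivity of $\tau$ for the backward direction, and density of finite sums of product functions in the inductive-limit topology (Stone--Weierstrass plus a dominating function $\phi\geq 1$ on the relevant compact set) for the forward direction. The one step I would tighten is the construction of the cutoffs: since elements of $C_c(X)$ need not be continuous in this non-Hausdorff setting, a function in $C_c(X)$ identically $1$ on a neighbourhood of the quasi-compact projection $K_X$ need not exist unless $K_X$ lies in a single Hausdorff chart, and the projection of a Hausdorff compact set need not be Hausdorff. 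So before choosing $\chi_X\otimes\chi_Y$ you should further split $g$, via a partition of unity on the Hausdorff chart carrying $\supp g$, so that its support sits in a product-type box $(W_X\times W_Y)\cap (X\times_Z Y)$ with $W_X\subseteq X$ and $W_Y\subseteq Y$ open Hausdorff; after that refinement everything you write goes through.
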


Let $H$ be a locally compact groupoid, and let $X$ and $Y$ be locally compact right $H$\nb-spaces. For $x\in X$ the equivalence class of $x$ in $X/H$ is denoted by $[x]$. The map $\pi$ induces a map from $X/H$ to $Y/H$, which we denote by $[\pi]$. Let $\pi\colon X\to Y$ be an $H$\nb-equivariant map and $\lambda$ an $H$\nb-invariant continuous family of measures along $\pi$. Then $\lambda$ induces a continuous family of measures $[\lambda]$ along $[\pi]\colon X/H\to Y/H$. The proof is similar to the proof of Proposition 2.15 in~\cite{mythesis}. For $f\in C_c(X/H)$ and $[y]\in Y/H$ the measure $[\lambda]^{[y]}$ is defined as
\begin{equation}
  \label{eq:quotient-family-of-measures}
\int_{X/H} f\,\dd[\lambda]^{[y]} \defeq \int_{X} f([x])\,\dd\lambda^y(x).  
\end{equation}
\begin{example}
  \label{ex:def-of-beta-x}
  Let $X$ be a proper right $H$\nb-space. Let $\beta$ be a Haar system on $H$. Then Lemma~\ref{lemma:french-1.2} says that $\beta_1=\{\delta_x\times\beta_{s_X(x)}\}_{x\in X}$ is a continuous family of measures along the projection map $\pi_1\colon X\times_{\base}H\to X$. Take the quotient by the action of $H$ to get a continuous family of measure  $[\pi_1]$ along the map $[\pi_1]\colon (X\times_{\base}H)/H\to X/H$. Identifying $(X\times_{\base}H)/H= X$ gives that $[\pi_1]$ is the quotient map, and a small computation gives that for $f\in C_c(X)$ and for $[x]\in X/H$,\[\int_X f\,\dd[\beta_1]^{[x]}=\int_{H^{s_X(x)}} f(x\eta)\,\dd\beta^{s_X(x)}(\eta).\]
 In the rest of the article we write $\beta_X$ instead of $[\beta_1]$.
\end{example}

\subsection{Cohomology for groupoids}\label{sec_Cohomology_for_groupoids}

 In this section, the groupoids and maps are
assumed to be Borel. The whole
discussion goes through when the Borel properties are replaced by the 
continuous properties. 
\begin{definition}[Action of a groupoid on another groupoid]\label{def:action-of-gpd-on-gpd}
A left action of a groupoid $G$ on another groupoid $H$ is given by
maps $r_{H,G}\colon  H\to\base[G]$ and $a\colon  G\times_{s_G,\base[G],r_{H,G}}H\to H$ which satisfy the following conditions:

    \begin{enumerate}[label=\roman*)]
    	\item if $\eta,\eta'\in H$ are composable, $\gamma\in G$
          with $s_G(\gamma)=r_{H,G}(\eta)=r_{H,G}(\eta')$, then
          $a(\gamma,\eta),a(\gamma,\eta')\in H$ are composable and 
\[a(\gamma,\eta)a(\gamma,\eta')=a(\gamma, \eta\eta');\]
     \item if $u\in\base[G]$, then $a(u, \eta)=\eta$ for all
          $\eta\in r_{H,G}\inverse(u)\subseteq H$;
	\item if $\gamma,\gamma'\in G$ are composable, then $(\gamma,
          a(\gamma',\eta))\in G\times_{s_G,\base[G],r_{H,G}}H$ and
          \[a(\gamma\gamma',\eta) = a(\gamma, a(\gamma',\eta)).\]  
    \end{enumerate}
\end{definition}
To simplify the notation, we write $\gamma\cdot\eta$ or simply
$\gamma\eta$ for $a(\gamma,\eta)$, and $G\times_{\base[G]}H$ for $G\times_{s_G,\base[G],r_{H,G}}H$. Then (i) and (ii) above read
$\gamma\cdot(\eta\eta')=(\gamma\cdot\eta)(\gamma\cdot\eta')$ and
$(\gamma\gamma')\cdot\eta=\gamma\cdot(\gamma'\cdot\eta)$,
respectively. We call the map $r_{H,G}$ the momentum map for the
action and $a$ the action map. When the momentum and action
maps are continuous (or Borel) the action is called continuous 
(or Borel, respectively).

It is not hard to see that Definition~\ref{def:action-of-gpd-on-gpd} gives an action of $G$ on $H$ via invertible functors. When $G$ is a group, our definition matches Definition 1.7 in~\cite{Renault1980Gpd-Cst-Alg}*{Chapter 1}, which is the action of a group on a groupoid by invertible functors. A proof of this fact is below.

\begin{lemma}
\label{lemma:gpd-to-gpd-action-implies-gp-to-gp-action}
When $G$ is a group, an action of $G$ on $H$ as in
Definition~\ref{def:action-of-gpd-on-gpd} above is the same as the
action in~\cite{Renault1980Gpd-Cst-Alg}*{Definition 1.7, Chapter 1},  that is, there is a homomorphism $\phi\colon  G\to \textup{Aut}(H)$ which gives the action. Here $\textup{Aut}(H)$ is the set of all invertible functors from
$H$ to itself.
\end{lemma}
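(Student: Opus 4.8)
The plan is to exhibit the homomorphism $\phi$ directly from the action map and then to check that the two notions determine each other. Since $G$ is a group, its unit space $\base[G]$ is a single point, so the constraint $s_G(\gamma)=r_{H,G}(\eta)$ defining the fibre product $G\times_{\base[G]}H$ is automatically satisfied; hence the action map $a$ is in fact defined on all of $G\times H$. For each $\gamma\in G$ I would set $\phi(\gamma)\defeq a(\gamma,\cdot)\colon H\to H$, $\eta\mapsto\gamma\cdot\eta$, and the goal is to show that each $\phi(\gamma)$ is an invertible functor of $H$ and that $\gamma\mapsto\phi(\gamma)$ is a group homomorphism. The converse, producing an action from a homomorphism into $\textup{Aut}(H)$, is then immediate by reversing the reading of the axioms.

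First I would verify that each $\phi(\gamma)$ is a functor. Preservation of composition and of composability is exactly condition (i) of Definition~\ref{def:action-of-gpd-on-gpd}. The remaining functoriality data --- preservation of the unit space, of the range and source maps, and of inverses --- are not postulated in the definition, so I would deduce them from (i) using the standard fact that the units of a groupoid are precisely its idempotents. Concretely, if $\eta$ is a unit then $\eta\eta=\eta$, so by (i) $(\gamma\cdot\eta)(\gamma\cdot\eta)=\gamma\cdot(\eta\eta)=\gamma\cdot\eta$, whence $\gamma\cdot\eta$ is idempotent and therefore a unit; thus $\phi(\gamma)$ maps $\base[H]$ into itself. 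Applying (i) to the identity $r_H(\eta)\,\eta=\eta$ shows that the unit $\gamma\cdot r_H(\eta)$ is composable on the left with $\gamma\cdot\eta$, which forces $r_H(\gamma\cdot\eta)=\gamma\cdot r_H(\eta)$; the analogous computation with $\eta\,s_H(\eta)=\eta$ handles the source map, and applying (i) to $\eta\,\eta\inverse=r_H(\eta)$ gives $(\gamma\cdot\eta)\inverse=\gamma\cdot\eta\inverse$.

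It then remains to establish invertibility and the homomorphism property, which both flow from axiom (iii). Reading (iii) as $\phi(\gamma\gamma')=\phi(\gamma)\circ\phi(\gamma')$ gives the homomorphism property at once, and combining it with axiom (ii), which says $\phi(u)=\textup{id}_H$ for the unit $u\in\base[G]$, yields $\phi(\gamma)\circ\phi(\gamma\inverse)=\phi(\gamma\inverse)\circ\phi(\gamma)=\textup{id}_H$; hence each $\phi(\gamma)$ is invertible with inverse $\phi(\gamma\inverse)$ and so lands in $\textup{Aut}(H)$. For the converse I would, given $\phi\colon G\to\textup{Aut}(H)$, take $r_{H,G}$ to be the unique constant map $H\to\base[G]$ and set $a(\gamma,\eta)\defeq\phi(\gamma)(\eta)$; then (i), (ii) and (iii) read off respectively from the functoriality of each $\phi(\gamma)$, from $\phi(u)=\textup{id}_H$, and from the multiplicativity of $\phi$. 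I expect the only genuinely non-routine point to be the middle paragraph: the functor axioms concerning units and the source and range maps are not part of the hypotheses and must be extracted from the single multiplicativity condition (i) through the idempotent description of units.
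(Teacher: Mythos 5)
Your proposal is correct and follows essentially the same route as the paper's own proof: define $\phi(\gamma)=a(\gamma,\cdot)$, use the idempotent characterisation of units together with condition (i) to get functoriality, and use conditions (ii) and (iii) to get the homomorphism property and invertibility, with the converse being routine. The only difference is that you spell out preservation of the range, source and inverse maps explicitly, which the paper leaves implicit after checking units and multiplicativity.
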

\begin{proof}
Definition~\ref{def:action-of-gpd-on-gpd} implies~\cite{Renault1980Gpd-Cst-Alg}*{Definition 1.7, Chapter 1}: 
For $\gamma\in G$ define $ \phi(\gamma)(\eta)=\gamma\cdot\eta$.
We first prove that each $\phi(\gamma)$ is a functor from $H$ to
itself.

Note that an element $u$ in a groupoid is a unit if and only if $u$
is composable with itself and $u^2=u$. If
$u\in \base[G]$, then $\phi(\gamma)(u)=\phi(\gamma)(uu)=
\phi(\gamma)(u) \phi(\gamma)(u)=(\phi(\gamma)(u))^2$. Hence for each unit $u\in \base$,
$\phi(\gamma)(u)\in H$ is a unit. Condition (i) of Definition~\ref{def:action-of-gpd-on-gpd} gives that for each $\gamma\in G$, 
$\phi(\gamma)(\eta\eta')=\phi(\gamma)(\eta)\phi(\gamma)(\eta')$. This
proves that $\phi(\gamma)$ is functor for each $\gamma\in G$.
\smallskip 

Now we show that each of the $\phi(\gamma)$ is invertible. Condition (iii) of
Definition~\ref{def:action-of-gpd-on-gpd} gives that $\gamma\mapsto \phi(\gamma)$ is a homomorphism. Use
(iii) of Definition~\ref{def:action-of-gpd-on-gpd} to see
that $\phi(\gamma)$ is invertible:
\[
\phi(\gamma)\phi(\gamma\inverse)(\eta)=\phi(\gamma\gamma\inverse)(\eta)=\phi(r_G(\gamma))(\eta)=\eta=\textup{Id}_H(\eta).
\]

Similarly, $\phi(\gamma\inverse)\phi(\gamma)=\textup{Id}_H$. Thus
$\phi(\gamma)\inverse = \phi(\gamma\inverse)$. Hence $\phi(\gamma)\in
\textup{Aut}(H)$.

~\cite{Renault1980Gpd-Cst-Alg}*{Definition 1.7, Chapter 1} implies Definition~\ref{def:action-of-gpd-on-gpd} implies: Proof of this part is routine checking of the conditions in Definition~\ref{def:action-of-gpd-on-gpd}.
\end{proof}

A continuous (and Borel) version of Lemma~\ref{lemma:gpd-to-gpd-action-implies-gp-to-gp-action} can be proved along same lines merely by adding continuity (or Borelness) of the action map and the
momentum map and the continuity (Borelness) of the group homomorphism $\phi$.

\begin{example}
\label{exa:gp-gpd-action}
Let $G$ be a groupoid and $H$ a space. Then an action of $G$ on
$H$ is the same as an action of $G$ on $H$ viewed as a groupoid. In this case,
condition (i) in Definition~\ref{def:action-of-gpd-on-gpd} is irrelevant and 
then the definition matches the usual one.
\end{example}

\begin{example}\label{exm:bispace-gives-gpd-action-on-gpd}
Let $G$ and $H$ be groupoids and let $X$ be a $G$-$H$\nb-bispace. Define an
action of $G$ on the transformation groupoid $X\rtimes H$ by $\gamma (x, \eta)
\defeq (\gamma x, \eta)$. The momentum map for this action
is $(x,\eta) \mapsto r_X(x) \in \base[G]$. Let $(x\eta, \eta'),
(x,\eta)\in X\times_{\base} H$ be composable elements then $(\gamma
x\eta, \eta')(\gamma x,\eta)$ are composable and \[\gamma \cdot(x,\eta) \cdot\gamma\,(x\eta, \eta')=(\gamma x,\eta) (\gamma x\eta, \eta')= (\gamma x, \eta\eta') = \gamma(x, \eta\eta').\]
This verifies (i) of Definition~\ref{def:action-of-gpd-on-gpd}. The
other conditions are easy to check. Thus
 $H$ acts on the groupoid $G\ltimes X$ in our sense. This is an important example for us.
\end{example}

Let $H$ be a Borel groupoid and assume that $H$ acts on a Borel groupoid
$G$. Let $\base[G]$ and $G^{(1)}$ have the usual meaning. For $n= 2, 3, \ldots$ define \[G^{(n)} = \{(\gamma_0, \dots,\gamma_{n-1}) \in \underbrace{G\times
G\times\dots\times G}_{\text{$n$-times}}: s_G(\gamma_i) = r_G(\gamma_{i+1})
\text{ for } 0\leq i< n-2\}.\]
\begin{definition}
\label{def:cochain-complex}
 Let $G$, $H$ be Borel groupoids, let $A$ be an abelian Borel group
 and let $H$ act on $G$. The $A$\nb-valued $H$\nb-invariant
 Borel cochain complex $(B\mathcal{C}_H^{\bullet}(G; A), d^{\bullet})$
 is defined as follows:
 	\begin{enumerate}[label=\textit{\roman*})]
		\item The abelian groups $B\CC_H^n$ are:
			\begin{enumerate}
		  	\item $B\CC_H^0(G; A) \defeq \{ f: \base[G]
                          \to A: f \;\textnormal{ is an }\;H\text{-invariant Borel map}\}$;
			\item for $n>0$ $ B\CC_H^n(G; A) \defeq$ $\{
                          f\colon  G^{(n)} \rightarrow A$ : $f$ is an~$H$\nb-invariant Borel map and
                          $f(\gamma_0,\dots,\gamma_{n-1})= 0$
                          if $\gamma_i \in \base[G]$ for some $0\leq i\leq n-1\}$,
			\end{enumerate}
		\item the coboundary map $d$ is:
			\begin{enumerate}
				\item $d^0\colon  B\CC_H^0(G; A) \rightarrow B\CC_H^1(G; A)$ is $d^0(f)(\gamma) = f(s_G(\gamma)) - f(r_G(\gamma))$,
				\item for $n>0$, $d^n\colon  B\CC_H^n(G; A)
                                  \rightarrow B\CC_H^{n+1}(G; A)$ is 
\begin{multline*}
d^n(f)((\gamma_0,\dots,\gamma_{n}))= f(\gamma_1,\dots,\gamma_{n})\\
+\sum\limits_{i=1}^n (-1)^i f(\gamma_0,\dots,
\gamma_{i-1}\gamma_i,\dots,\gamma_{n}) + (-1)^{n+1}
f(\gamma_0,\dots,\gamma_{n-1}).
\end{multline*} 
                       \end{enumerate}
	\end{enumerate}
\end{definition}

The $n$-th cohomology group of this complex is the $n$-th
$H$-invariant \emph{Borel cohomology} of~$G$ for $n\geq 0$, and it is
denoted by $\HH_{\mathrm{Bor}, H}^n(G; A)$. By adding the action of
$H$ to all the maps and spaces, one can make sense of the machinery and the results in~\cite{Westman1969Gpd-cohomology}*{\S 1 and \S 2}.
\begin{remark}
   \label{rem:char-of-low-dim-cocycle}
Any $H$\nb-invariant Borel function $f$ on $\base[G]$ is a $0$-cochain. A cochain $f\in B\CC_H^0(G; A)$ is a cocycle \emph{iff} $d^0(f) = 0$ which is true \emph{iff} $f$ is constant on the orbits of $\base[G]$.
A cochain $k \in B\CC_H^1(G; A)$ is a cocycle \emph{iff} $k(\gamma_0)-
k(\gamma_0\gamma_1)+k(\gamma_1) = 0$ for all composable $\gamma_0$ and
$\gamma_1$, which is equivalent to saying that $k$ is an $H$\nb-invariant Borel groupoid homomorphism.
\end{remark}
We drop the suffixes $B$ and $\mathrm{Bor}$ and write merely $\CC_H^0(G; A)$ and $\HH_H^n(G; A)$. 
\begin{remark}
  \label{rem:relation-between-good-zero-cocycles}
Let $b,b'\in \CC_H^0(G; A)$ with $d^0(b)=d^0(b')$. Then $c=b-b'$ is a 0\nb-cocycle since $d^0(c)=d^0(b)-d^0(b')=0$. Remark~\ref{rem:char-of-low-dim-cocycle} now gives that $c$ is constant on the orbits of $\base[G]$.
Thus $c$ is a function on $\base[G]/G$.
\end{remark}

\subsection{Quasi-invariant measures}
\label{sec:quasi-invar-meas}

Let $(G,\alpha)$ be a locally compact groupoid with a Haar
system. Using $\alpha$ we get a right invariant family of
measures~$\alpha\inverse$ along the source map. For $f\in C_c(G)$ set $\int
f(\gamma)\,\dd\alpha\inverse_u(\gamma)=\int f(\gamma\inverse)\,\dd\alpha^u(\gamma)$. Let $X$ be a left $G$\nb-space and let $\mu$ be a measure on $X$. We define a measure $\mu\circ\alpha\inverse$ on the space $G\times_{\base[G]}X$ by
\[
\int_{G\times_{\base[G]}X}f\,\dd(\mu\circ\alpha\inverse)=\int_X\int_{G^{r_X(x)}} f(\gamma\inverse,x)\,\dd\alpha^{r_X(x)}(\gamma)\,\dd\mu(x)
\]
for $f\in C_c(G\times_{\base[G]}X)$. Similarly we define the measure $\mu\circ\alpha$.
\begin{definition}[Quasi-invariant measure]\label{def:quasi-invariant-measure-on-gpd}
  Let $(G,\alpha)$ be a groupoid with a Haar system and $X$ a $G$\nb-space. A measure $\mu$ on $X$ is
  called $(G,\alpha)$\nb-\emph{quasi-invariant} if $\mu\circ\alpha$ and
  $(\mu\circ\alpha)\circ\textup{inv}_{G\ltimes X}$ are equivalent.
\end{definition}

In the above definition, $\textup{inv}_{G\ltimes X}$ is the inverse
function on the transformation groupoid $G\ltimes X$. Thus for $f\in
C_c(G\times_{\base[G]}X)$
\[(\mu\circ\alpha)\circ\textup{inv}_{G\ltimes X}(f)=\int_X\int_{G^{r_X(x)}} f(\gamma,\gamma\inverse x)\,\dd\alpha^{r_X(x)}(\gamma)\dd\mu(x).\]
When the groupoid $(G,\alpha)$ with a Haar measure in the discussion is fixed and there is no possibility of confusion, we write `$\mu$ is a quasi-invariant measure'
\begin{remark}\label{rem:mod-fun-for-action-gpd-is-ae-homomorphism}
As in~\cite{Anantharaman-Renault2000Amenable-gpd} or
\cite{Buneci-Stachura2005Morphisms-of-lc-gpd}, it can be shown that the
Radon-Nikodym derivative
$\dd(\mu\circ\alpha)/\dd(\mu\circ\alpha\inverse)$ is a
$\mu\circ\alpha$\nb-almost everywhere a groupoid
homomorphism from the transformation groupoid $G\ltimes X$ to
$\R^*_+$.
\end{remark}
\begin{remark}
   A $(G,\alpha)$\nb-quasi-invariant
   measure $\mu$ is $G$\nb-invariant if and only if the Radon-Nikodym
   derivative $\dd(\mu\circ\alpha)/\dd(\mu\circ\alpha\inverse)=1$ 
  $\mu\circ\alpha$\nb-almost everywhere on $G\times_{\base[G]}X$.
\end{remark}
A \emph{measured groupoid} is a triple $(G,\alpha,\mu)$ where $(G,\alpha)$ is a locally compact groupoid with a Haar system and $\mu$ is a $(G,\alpha)$\nb-quasi-invariant measure on $\base[G]$.
\begin{definition}[Modular function]\label{def:modular-function}
 The \emph{modular function} of a measured groupoid $(G,\alpha,\mu)$ is the
  Radon-Nikodym derivative $\dd(\mu\circ\alpha)/\dd(\mu\circ\alpha\inverse)$.
\end{definition}
The reason to use the article \textit{the} for the modular function is
that if $\Delta$ and $\Delta'$ are two modular, then $\Delta
= \Delta'$ $\mu\circ\alpha$-almost everywhere.

\subsection{\texorpdfstring{$\Cst$}{C*}-correspondences}
\label{sec:some-more-definitios}

 We shall use the theory of Hilbert modules and assume that the reader is
familiar with the basics of the theory (For example,~\cite{Lance1995Hilbert-modules}).
\begin{definition}\label{def:Cst-corr}
  Let $A$ and $B$ be $\Cst$ algebras. A
  \emph{$\Cst$\nb-correspondence} from $A$ to $B$ is a Hilbert
  $B$\nb-module $\Hils$ with a non-degenerate *\nb-representation $A\to \Bound_{B}(\Hils)$.
\end{definition}
Here $\Bound_{B}(\Hils)$ denotes the $\Cst$\nb-algebra of adjointable operators on $\Hils$. A Hilbert $B$\nb-module $\Hils$ is \emph{full} if the linear span of the image of $\Hils\times\Hils$ under the inner product map is dense in $B$. We call a $\Cst$\nb-correspondence $\Hils$ from $A$ to $B$ a
\emph{proper} correspondence if $A$ acts on
$\Hils$ by compact operators, that is, the action of $A$ is given by a
non-degenerate *\nb-representation $A\to\Comp_B(\Hils)$.
\begin{definition}
  An \emph{imprimitivity bimodule} from $A$ to $B$ is an $A$-$B$\nb-bimodule $\Hils$ such that
  \begin{enumerate}[label=\roman*)]
  \item $\Hils$ is a full left Hilbert $A$\nb-module with an inner product $\prescript{}{*}{\inpro{}{}}$;
  \item $\Hils$ is a full right Hilbert $B$\nb-module with an inner product $\inpro{}{}_*$;
  \item $(\Hils, \prescript{}{*}{\inpro{}{}})$ is a correspondence from $B$ to $A$;
  \item $(\Hils, \inpro{}{}_*)$ is a correspondence from $A$ to $B$;
  \item for $a,b,c\in\Hils$ $a\inpro{b}{c}_* =\, \prescript{}{*}{\inpro{a}{b}}c$.
  \end{enumerate}
\end{definition}
 Our notion of $\Cst$\nb-correspondence (Definition~\ref{def:Cst-corr}) is wider, in the sense that many authors demand that the Hilbert module involved in a $\Cst$\nb-correspondence is \emph{full}, or for some authors a $\Cst$\nb-correspondence is what we call a \emph{proper} correspondence.

In~\cite{Rieffel1974Induced-rep}, Rieffel shows that an $A$-$B$\nb-imprimitivity bimodule induces an isomorphism between the representation
categories of $B$ and $A$. In general, if $\Hils$ is a $\Cst$\nb-correspondence from $A$ to $B$, then $\Hils$ induces a
functor from $\textup{Rep}(B)$, the representation category of $B$, to $\textup{Rep}(A)$.

\section{Topological correspondences}
\label{cha:meas-corr}

\begin{definition}[Topological correspondence]
  \label{def:correspondence}
A \emph{topological correspondence} from a locally compact groupoid $G$ with a Haar
system $\alpha$ to a locally compact groupoid $H$ equipped with a Haar system $\beta$ is a pair $(X, \lambda)$ where:
	\begin{enumerate}[label=\textit{\roman*})]
		\item $X$ is a locally compact $G$-$H$-bispace,
                \item the action of $H$ is proper,
              	\item $\lambda = \{\lambda_u\}_{u\in\base}$ is an
                  $H$\nb-invariant proper continuous family of measures along the momentum map $s_X\colon  X\to\base$,
		\item  $\Delta$ is a continuous function $\Delta: G
                  \times_{\base[G]} X \rightarrow \R^+$ such that for each $u
                  \in \base$ and $F\in C_c(G\times_{\base[G]}X)$,
                  \begin{multline*}
                    \int_{X_{u}} \int_{G^{r_X(x)}} F(\gamma\inverse,
                    x)\, \dd\alpha^{r_X(x)}(\gamma)\, \dd\lambda_{u}(x) \\=
                    \int_{X_{u}} \int_{G^{r_X(x)}} F(\gamma,
                    \gamma\inverse x)\, \Delta(\gamma, \gamma\inverse
                    x) \, \dd\alpha^{r_X(x)}(\gamma)
                    \,\dd\lambda_{u}(x). 
                \end{multline*}
	\end{enumerate}
\end{definition}

If $\Delta'$ is another function that satisfies condition (iv)
in Definition~\ref{def:correspondence}, then $\Delta = \Delta'$ 
$\lambda_u\circ\alpha$-almost everywhere for each $u\in\base$. As
both $\Delta$ and $\Delta'$ are continuous, we get $\Delta=\Delta'$. We
call the function $\Delta$ \emph{the adjoining function} of the
correspondence $(X,\lambda)$.
\begin{remark}
\label{rem:specialities-of-corr}
  Note that we do not need that the momentum maps $s_X$ and $r_X$ are
  open surjections. We also do not demand that the family of measures
  $\lambda$ has full support, hence the Hilbert
  module in the resulting $\Cst$\nb-correspondence need not be full.
  The resulting $\Cst$\nb-correspondence need not be proper, as well.
\end{remark}

\begin{remark}
\label{rem:quasi-inv-of-lambda}
Referring to Definition~\ref{def:quasi-invariant-measure-on-gpd},
we can see that Condition (iv) in Definition~\ref{def:correspondence} says that
the measure $\alpha\times\lambda_u$ on $G\times_{\base[G]}X_u$ is
$(G,\alpha)$\nb-quasi-invariant for each $u\in \base$ where the measure $\alpha\times\lambda_u$ is defined as follows: for $f\in C_c(G\times_{\base[G]}X_u)$,
\[\int_{G\times_{\base[G]}X_u} f\,\dd(\alpha\times\lambda_u)=\int_{X_{u}} \int_{G^{r_X(x)}} f(\gamma\inverse,x)\, \dd\alpha^{r_X(x)}(\gamma)\, \dd\lambda_{u}(x).\]
\end{remark}

In short, ``\emph{A topological correspondence from $G$ to $H$ is a pair $(X,\lambda)$ where $X$ is a $G$-$H$\nb-bispace and $\lambda$ is an $H$\nb-invariant family of measures  on $X$ indexed by $\base$ and each measure in $\lambda$ is $G$\nb-quasi-invariant.}''

\begin{remark}\label{rem:Delta-is-ae-homomorphism}
 As in~\cite{Anantharaman-Renault2000Amenable-gpd} or \cite{Buneci-Stachura2005Morphisms-of-lc-gpd}, it can be shown that $\Delta$ restricted to $G\times_{\base[G]}X_u$ is $\alpha\times \lambda_{u}$\nb-almost everywhere a groupoid homomorphism for all $u\in\base$. So the function $\Delta$ in (iv) of Definition~\ref{def:correspondence} is a continuous 1\nb-cocycle on the groupoid $G\ltimes X$. We shall use this fact in many computations.
\end{remark}

\begin{remark}\label{rem:Delta-is-right-invariant}
Example~\ref{exm:bispace-gives-gpd-action-on-gpd} gives a right
action of $H$ on $G\ltimes
X$. In~\cite{Buneci-Stachura2005Morphisms-of-lc-gpd}, Buneci and
Stachura use an adjoining function exactly like us. The topological correspondence Buneci and Stachura define is a special case of our construction
(Example~\ref{exa:buneci-stachura}). They show that the adjoining
function in their case is $H$\nb-invariant (see~\cite{Buneci-Stachura2005Morphisms-of-lc-gpd}*{Lemma 11}). In the similar fashion
we may prove that $\Delta$ is $H$\nb-invariant under the right action of $H$, that is,\[\Delta(\gamma, x\eta) =\Delta(\gamma, x) \] 
\noindent for all composable triples $(\gamma, x, \eta) \in G\times_{s_G,\base[G],r_X}X\times_{s_X,\base,r_H} H$. Thus, in
fact, $\Delta\colon  G\ltimes X/H\to\R^*_+$. Now
Remark~\ref{rem:Delta-is-ae-homomorphism} can be made finer by saying
that $\Delta$ is an $H$\nb-invariant continuous 1-cocycle on the groupoid $G\ltimes X$.

\end{remark}
\paragraph{Use of the family of measures and the adjoining function:}
\label{para:use-of-adjo-funct}
In the following discussion we explain the role of the adjoining function.
Let $(X,\lambda)$ be a topological correspondence from
$(G,\alpha)$ to $(H,\beta)$ with $\Delta$ as the adjoining
function.  We make $C_c(X)$ into a $C_c(H)$-module using the
same formula as in~\cite{Muhly-Renault-Williams1987Gpd-equivalence}
or~\cite{Stadler-Ouchi1999Gpd-correspondences}. 
To make $C_c(X)$ into a $\Cst(H,\beta)$\nb-pre-Hilbert module, we need to define a
$C_c(H)$\nb-valued inner product on $C_c(X)$. The formula for
this inner product cannot be copied directly from
either~\cite{Muhly-Renault-Williams1987Gpd-equivalence}
or~\cite{Stadler-Ouchi1999Gpd-correspondences}. This formula has to be modified,
and it uses the family of measures $\lambda$. 

Talking about the left action, for $\phi\in C_c(G)$ and $f\in C_c(X)$
\cite{Muhly-Renault-Williams1987Gpd-equivalence} and~\cite{Stadler-Ouchi1999Gpd-correspondences} define $\phi\cdot f \in C_c(X)$ by
\begin{equation}
  \label{eq:disc-classical-left-action}
(\phi\cdot f)(x) = \int_G \phi(\gamma) f(\gamma\inverse x) \, \dd \alpha^{r_X(x)}(\gamma).  
\end{equation}

For our definition of topological correspondence, the action of $C_c(G)$ on the $\Cst(H,\beta)$\nb-pre-Hilbert module $C_c(X)$ defined by Formula~\ref{eq:disc-classical-left-action} is not necessarily an action by adjointable operators. For $\phi$ and~$f$ as above we define the left action by
\begin{equation}
  \label{eq:disc-our-left-action}
(\phi\cdot f)(x) \defeq \int_G \phi(\gamma) f(\gamma\inverse x) \,
\Delta^{1/2}(\gamma, \gamma\inverse x) \; \dd
\alpha^{r_X(x)}(\gamma).  
\end{equation}
We shall see that the adjoining function gives a nice scaling factor for the action of $C_c(G)\subseteq \Cst(G,\alpha)$ and makes this action a \Star{}homomorphism. This is the reason we call $\Delta$ the \emph{adjoining} function. 

Two examples of topological correspondences are: an equivalence between groupoids (See~\cite{Muhly-Renault-Williams1987Gpd-equivalence} or Definition~\ref{def:gpd-equivalence}) and the correspondence of Marta Stadler and O'uchi (See~\cite{Stadler-Ouchi1999Gpd-correspondences} or Example~\ref{exm:Stadler-Ouchi-correspondence}). For equivalences and Macho Stadler-O'uchi correspondences the adjoining function is the constant function $1$, and then formulae~\eqref{eq:disc-classical-left-action} and~\eqref{eq:disc-our-left-action} match. To understand the role of $\Delta$ the reader may have a look at Lemma~\ref{lemma:left-action-self-adj}.

To support the necessity of the adjoining function, consider a toy example: Let $G$ be a locally compact group and let it act on a locally compact (possibly Hausdorff) space $X$ carrying a measure $\lambda$. The left  multiplication action of $C_c(G)$ on $C_c(X)\subseteq\Ltwo(X,\lambda)$ defined by Equation~\eqref{eq:disc-classical-left-action} is not necessarily bounded. To make this action bounded, it is sufficient that $\lambda$ is $G$\nb-quasi-invariant, which brings the adjoining function into the picture. Then the left action of $C_c(G)$ given by Equation~\eqref{eq:disc-our-left-action} becomes a \Star{}representation. This motivated us to introduce Condition (iv) in Definition~\ref{def:correspondence}. Buneci and Stachura~\cite{Buneci-Stachura2005Morphisms-of-lc-gpd} also use the adjoining function.

For the left multiplication action of $G$ on $G/K$, where $K$ is a closed subgroup of $G$,  the space $G/K$ always carries a   $G$-quasi-invariant measure $\lambda$. Hence there is a representation of $G$ on $\Ltwo(G/K, \lambda)$. Quasi-invariant measures and the corresponding \emph{adjoining functions} are studied very well in the group case, for example, see Section 2.6 of~\cite{Folland1995Harmonic-analysis-book}.

At this point, readers may peep into Section~\ref{cha:examples-theory} to see
some examples of adjoining functions.
\smallskip

We start with the main construction now. For $\phi \in C_c(G)$, $f \in C_c(X)$ and $\psi \in C_c(H)$ define functions $\phi\cdot f$ and $f\cdot \psi$ on $X$ as follows:

\begin{equation}\label{def:left-right-action}
 \left\{\begin{aligned}
(\phi\cdot f)(x) &\defeq \int_{G^{r_X(x)}} \phi(\gamma) f(\gamma\inverse x) \,
\Delta^{1/2}(\gamma, \gamma\inverse x) \; \dd \alpha^{r_X(x)}(\gamma),\\
(f\cdot\psi )(x) &\defeq \int_{H^{s_X(x)}} f(x\eta)\psi({\eta}\inverse) \; \dd
\beta^{s_X(x)}(\eta).
  \end{aligned}\right.
\end{equation}

For $f, g \in C_c(X)$ define the function $\langle f, g \rangle $ on $H$ by
\begin{align}\label{def:inner-product}
\langle f, g \rangle (\eta) &\defeq \int_{X_{r_H(\eta)}} \overline{f(x)} g(x\eta) \; \dd
\lambda_{r_H(\eta)}(x).
\end{align}
Most of the times we write $\phi f$ and $f\psi$ instead of $\phi\cdot f$
and $f\cdot \psi$. 
\begin{lemma}\label{lemma: cntty-of-lr-action}
Among the functions $\phi f$, $f\psi$ and $\inpro{f}{g}$ defined above, the first two are in $C_c(X)$ and the last one is in $C_c(H)$.
\end{lemma}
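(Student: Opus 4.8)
The plan is to verify the three assertions separately, in each case first reducing to elementary generators and then splitting the work into a support estimate and a continuity estimate. Since $C_c(G)$, $C_c(H)$ and $C_c(X)$ are by definition linear spans of functions in $C_c(\cdot)_0$, and since the operations are linear in $\phi$, linear in $\psi$, and sesquilinear in $(f,g)$, it suffices to treat the case where $\phi\in C_c(G)_0$, $\psi\in C_c(H)_0$ and $f,g\in C_c(X)_0$ are supported on open Hausdorff subsets of $G$, $H$ and $X$ respectively. The essential technical input is the $*$\nb-category formalism of~\cites{Renault1985Representations-of-crossed-product-of-gpd-Cst-Alg, Holkar-Renault2013Hypergpd-Cst-Alg}, which is designed precisely to show that convolution-type integrals of such generators stay inside $C_c(\cdot)$ even though the ambient spaces are non\nb-Hausdorff and the momentum maps $r_X,s_X$ are not assumed open; concretely, I would phrase each of the three formulae as a composition in this category.

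For the support estimates I would argue pointwise. If $(\phi\cdot f)(x)\neq 0$ then there is $\gamma\in\supp(\phi)$ with $\gamma\inverse x\in\supp(f)$, whence $x\in\supp(\phi)\,\supp(f)$, which is the image of the closed subset $(\supp(\phi)\times\supp(f))\cap(G\times_{\base[G]}X)$ of the quasi\nb-compact set $\supp(\phi)\times\supp(f)$ under the continuous action map, hence quasi\nb-compact. The estimate $\supp(f\cdot\psi)\subseteq\supp(f)\,\supp(\psi)$ is obtained the same way, using that inversion is a homeomorphism of $H$. For the inner product, $\inpro{f}{g}(\eta)\neq 0$ forces the existence of $x\in\supp(f)$ with $x\eta\in\supp(g)$; the set of such pairs $(x,\eta)$ is the preimage of the quasi\nb-compact set $\supp(f)\times\supp(g)$ under the map $X\times_{\base}H\to X\times X$, $(x,\eta)\mapsto(x,x\eta)$, and here I would invoke condition (ii) of Definition~\ref{def:correspondence}: properness of the $H$\nb-action makes this map proper, so the preimage is quasi\nb-compact and hence so is its projection $\supp(\inpro{f}{g})$ onto $H$. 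This is the one place where properness of the right action is genuinely used.

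For continuity (and membership in $C_c$) I would realise each operation as the integration of an honest $C_c$\nb-function against a continuous family of measures over a fibre product. For $f\cdot\psi$ put $F(x,\eta)=f(x\eta)\psi(\eta\inverse)$, which lies in $C_c(X\times_{\base}H)$ since the action map and inversion are continuous on the relevant Hausdorff pieces; then $(f\cdot\psi)(x)=\int F\,\dd(\delta_x\times\beta^{s_X(x)})$ is the integral against the family $\beta_1$ of Example~\ref{ex:def-of-beta-x}, whose continuity is established there, so continuity of $f\cdot\psi$ on the Hausdorff pieces of $X$ follows from condition (iii) of Definition~\ref{def:gpd-invariant-measure-family}. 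The argument for $\phi\cdot f$ is identical, integrating $(\gamma,x)\mapsto\phi(\gamma)f(\gamma\inverse x)\Delta^{1/2}(\gamma,\gamma\inverse x)$ against the $\alpha$\nb-family over $G\times_{\base[G]}X$; the extra factor $\Delta^{1/2}$ causes no trouble because $\Delta$ is continuous by Definition~\ref{def:correspondence}(iv). For $\inpro{f}{g}$ I would integrate $F(x,\eta)=\overline{f(x)}\,g(x\eta)\in C_c(X\times_{\base}H)$ over the fibres of the projection to $H$ against the family $\{\lambda_{r_H(\eta)}\times\delta_\eta\}$, reading off continuity from condition (iii) of Definition~\ref{def:gpd-invariant-measure-family} for $\lambda$.

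The main obstacle is precisely this last continuity step in the stated generality: since we do not assume $s_X$ or $r_X$ to be open (Remark~\ref{rem:specialities-of-corr}), the fibre\nb-product family used for $\inpro{f}{g}$ is not immediately continuous by a direct appeal to Lemma~\ref{lemma:french-1.2}, whose hypotheses require open surjections. I would resolve this on a single open Hausdorff chart at a time by approximating $F$ uniformly by finite sums $\sum_i\phi_i(\eta)f_i(x)$ with $f_i\in C_c(X)$ and $\phi_i\in C_c(H)$; each summand integrates to $\phi_i(\eta)\,\Lambda(f_i)(r_H(\eta))$, which is continuous by the continuity condition for $\lambda$, and passage to the uniform limit is controlled because the masses $\lambda_u(K)$ are locally bounded in $u$ (dominate $K$ by a bump function and use continuity of $\Lambda$). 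This is exactly the point at which the $*$\nb-category bookkeeping of~\cite{Holkar-Renault2013Hypergpd-Cst-Alg} does the heavy lifting, allowing one to conclude that $\phi f$ and $f\psi$ land in $C_c(X)$ and $\inpro{f}{g}$ lands in $C_c(H)$.
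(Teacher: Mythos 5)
Your strategy is sound and genuinely different from the paper's. The paper disposes of the lemma in one stroke: it identifies $X$ with $(X\times_{\base}H)/H$, views $f,g$ as arrows in the \Star{}category $\mathfrak{C}_c(H)$ of~\cite{Holkar-Renault2013Hypergpd-Cst-Alg}, observes that $\inpro{f}{g}=f^*\ast_\lambda g$ is a composition of arrows, and cites the closure of $C_c(\cdot)$ under such compositions (Renault's Lemme~3.1); the other two operations are handled by the same dictionary. You instead give the direct argument --- reduction to generators, a support estimate, and continuity via integration against continuous families of measures --- which is more self-contained and makes visible exactly where properness of the $H$\nb-action and condition (iii) of Definition~\ref{def:gpd-invariant-measure-family} enter. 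Your support estimates, and your identification of the non-open momentum maps as the obstruction to a naive appeal to Lemma~\ref{lemma:french-1.2}, are both correct.

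The one step you must tighten is the uniform-limit argument. In the non-Hausdorff setting, $C_c(H)$ as defined on page~\pageref{page:Cc} is \emph{not} closed under uniform limits with supports in a fixed quasi-compact set --- its elements need not even be continuous on $H$ --- so showing that $\inpro{f}{g}$ is a uniform limit of the functions $\eta\mapsto\phi_i(\eta)\Lambda(f_i)(r_H(\eta))$ controls its values but does not by itself place it in $C_c(H)$. The approximation argument becomes conclusive only after you first decompose the integrand $\overline{f(x)}\,g(x\eta)$, by a partition of unity subordinate to a finite cover of its quasi-compact support by sets of the form $(U\times W)\cap(X\times_{\base}H)$ with $U\subseteq X$ and $W\subseteq H$ open Hausdorff, so that each piece integrates to a function compactly supported in a single Hausdorff chart $W$ of $H$, where uniform limits of continuous functions are again continuous; the same remark applies to landing $\phi f$ and $f\psi$ in $C_c(X)$ rather than merely in the uniform closure of $C_c(X)$. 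That decomposition is precisely the content of Renault's Lemme~3.1 and of the \Star{}category bookkeeping you invoke at the end; as written, your proof assumes it at the decisive moment rather than supplying it.
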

\begin{proof}
The proof follows from ~\cite{Renault1985Representations-of-crossed-product-of-gpd-Cst-Alg}*{Lemme 3.1} and the \Star{}category $C_c(H)$ used in~\cite{Renault1985Representations-of-crossed-product-of-gpd-Cst-Alg} or~\cite{Holkar-Renault2013Hypergpd-Cst-Alg}. Detail computations can be found in~\cite{mythesis}*{Section 3.3.1} in which Tables~{3.1} and~{3.2} list the equivalence of the operations in Definitions~\ref{def:left-right-action} and~\ref{def:inner-product} and those in the {\Star}category $C_c(H)$.

We prove that $\inpro{f}{g}\in C_c(H)$ and the remaining claims can be proved similarly. Let $\mathfrak{C}_c(H)$ denote the \Star{}category for $H$. Then $(H,\beta)$ and $(X,\lambda)$ are objects in $\mathfrak{C}_c(H)$. We identify $X$ with $(X\times_{\base}H)/H$ and then think of $f,g\in C_c((X\times_{\base}H)/H)$ as arrows from $(X,\lambda)$ to $(H,\beta)$. Then $\inpro{f}{g}=f^**_\lambda g$, where the latter is in $C_c((H\times_{r_H,\base,r_H}H)/H)$. Now identify $C_c((H\times_{r_H,\base,r_H}H)/H)$ with $C_c(H)$ to conclude the proof. 
\end{proof}
Both $C_c(G)$ and $C_c(H)$ are \Star{}algebras. Denote
the convolution product on them by $*$.
\begin{lemma}\label{lemma:bhel}
  Let $\phi , \phi^{\prime} \in C_c(G)$, $\psi , \psi^{\prime} \in C_c(H)$ and
$f, g, g' \in C_c(X)$. Then
  \begin{align}
  (\phi \ast \phi^{\prime})f& = \phi (\phi^{\prime} f), \label{eq:bhel-1}\\
  f(\psi \ast \psi^{\prime})& = (f \psi )\psi^{\prime},  \label{eq:bhel-2}\\
(\phi f)\psi &= \phi(f\psi), \label{eq:bhel-3}\\
   \langle f, g+g' \rangle & = \langle f, g \rangle + \langle f, g' \rangle,\label{eq:bhel-4}\\  
  \langle f\,, g \rangle ^{\ast}& = \langle g\,, f \rangle, \label{eq:bhel-5}\\
   \langle f\,, g \rangle* \psi& = \langle f\,, g\psi
                                \rangle, \label{eq:bhel-6}\\
\langle \phi f\,, g \rangle& =\langle f\,,\phi^*g \rangle. \label{eq:bhel-7}
  \end{align}
\end{lemma}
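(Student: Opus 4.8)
The plan is to verify the seven identities one at a time by unfolding the definitions in~\eqref{def:left-right-action} and~\eqref{def:inner-product}, applying Fubini's theorem to the resulting iterated integrals, and reducing each equality to one of a few structural facts: left-invariance of the Haar systems $\alpha$ and $\beta$, invariance of the family $\lambda$ (condition (ii) of Definition~\ref{def:gpd-invariant-measure-family}), and the properties of the adjoining function $\Delta$ recorded in Remarks~\ref{rem:Delta-is-ae-homomorphism} and~\ref{rem:Delta-is-right-invariant}. The identities fall into three groups, which I would treat in increasing order of difficulty.

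First the bimodule axioms~\eqref{eq:bhel-1}--\eqref{eq:bhel-3}. For~\eqref{eq:bhel-1} I would expand $(\phi\ast\phi')f$, insert the convolution $(\phi\ast\phi')(\gamma)=\int\phi(\gamma_1)\phi'(\gamma_1\inverse\gamma)\,\dd\alpha^{r_X(x)}(\gamma_1)$, and substitute $\gamma=\gamma_1\gamma_2$ using left-invariance of $\alpha$. The one point that makes this work is that $\Delta^{1/2}$ factorizes along the substitution: since $\Delta$ is a continuous $1$-cocycle on $G\ltimes X$ (Remark~\ref{rem:Delta-is-ae-homomorphism}), one has $\Delta^{1/2}(\gamma_1\gamma_2,(\gamma_1\gamma_2)\inverse x)=\Delta^{1/2}(\gamma_1,\gamma_1\inverse x)\,\Delta^{1/2}(\gamma_2,\gamma_2\inverse\gamma_1\inverse x)$, which is exactly the pair of weights produced by the nested left actions on the right-hand side. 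Identity~\eqref{eq:bhel-2} involves no $\Delta$ and is the standard computation that $C_c(X)$ is a right $C_c(H)$-module, using left-invariance of $\beta$. For~\eqref{eq:bhel-3} I would write out $(\phi f)\psi$ and $\phi(f\psi)$ and swap the $\alpha$- and $\beta$-integrals by Fubini; compatibility hinges on the $H$-invariance $\Delta(\gamma,x\eta)=\Delta(\gamma,x)$ from Remark~\ref{rem:Delta-is-right-invariant}, which ensures that the weight $\Delta^{1/2}(\gamma,\gamma\inverse x\eta)$ appearing in $(\phi f)(x\eta)$ equals the weight $\Delta^{1/2}(\gamma,\gamma\inverse x)$ appearing in $\phi(f\psi)$.

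Next the inner-product identities~\eqref{eq:bhel-4}--\eqref{eq:bhel-6}. Linearity~\eqref{eq:bhel-4} is immediate. For symmetry~\eqref{eq:bhel-5} I would compute $\langle f,g\rangle^{\ast}(\eta)=\overline{\langle f,g\rangle(\eta\inverse)}$ and change variables using the invariance $\lambda_{r_H(\eta)}\eta=\lambda_{s_H(\eta)}$ of the family (Definition~\ref{def:gpd-invariant-measure-family}(ii)); note that~\eqref{eq:bhel-5} needs \emph{only} invariance of $\lambda$ and no quasi-invariance, because no left action on $X$ enters. For~\eqref{eq:bhel-6} I would expand $\langle f,g\rangle\ast\psi$ and $\langle f,g\psi\rangle$ and match them by the substitution $\eta_1=\eta\eta_2$ together with left-invariance of $\beta$, which is again the classical Hilbert-module computation.

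The main obstacle is the last identity~\eqref{eq:bhel-7}, $\langle\phi f,g\rangle=\langle f,\phi^{\ast}g\rangle$, which is the only place where condition (iv) of Definition~\ref{def:correspondence} (equivalently, quasi-invariance of $\lambda_u\circ\alpha$, Remark~\ref{rem:quasi-inv-of-lambda}) is used and where the exponent $1/2$ in~\eqref{def:left-right-action} is forced. I would expand $\langle\phi f,g\rangle(\eta)$ into a double integral over $X_{r_H(\eta)}\times G^{r_X(x)}$ with integrand $\overline{\phi(\gamma)}\,\overline{f(\gamma\inverse x)}\,\Delta^{1/2}(\gamma,\gamma\inverse x)\,g(x\eta)$, write this integrand as $F(\gamma\inverse,x)$, and apply condition (iv) with $u=r_H(\eta)$ to replace it by $F(\gamma,\gamma\inverse x)\,\Delta(\gamma,\gamma\inverse x)$, thereby moving the group variable off $f$. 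The delicate bookkeeping is with the powers of $\Delta$: after the substitution the surviving weight is $\Delta^{1/2}(\gamma\inverse,x)\,\Delta(\gamma,\gamma\inverse x)$, which I would simplify using the cocycle relation together with $\Delta$ sending units to $1$, giving $\Delta(\gamma\inverse,x)=\Delta(\gamma,\gamma\inverse x)\inverse$ and hence $\Delta^{1/2}(\gamma\inverse,x)\,\Delta(\gamma,\gamma\inverse x)=\Delta^{1/2}(\gamma,\gamma\inverse x)$. Recognizing $\overline{\phi(\gamma\inverse)}=\phi^{\ast}(\gamma)$ and invoking the $H$-invariance of $\Delta$ once more (to read $\Delta^{1/2}(\gamma,\gamma\inverse x)$ as the weight generated by $(\phi^{\ast}g)(x\eta)$) then identifies the result with $\langle f,\phi^{\ast}g\rangle(\eta)$. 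The arithmetic $\tfrac12-1=-\tfrac12$ of the $\Delta$-exponents is precisely what makes the two half-powers reassemble correctly, so this is the step I would write out in full; the rest of the lemma is routine.
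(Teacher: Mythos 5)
Your proposal is correct and follows essentially the same route as the paper's proof: the same ingredients are matched to the same identities (the cocycle property of $\Delta$ for~\eqref{eq:bhel-1}, its $H$\nb-invariance for~\eqref{eq:bhel-3}, invariance of $\lambda$ and $\beta$ for~\eqref{eq:bhel-5} and~\eqref{eq:bhel-6}, and condition (iv) of Definition~\ref{def:correspondence} combined with $\Delta(\gamma\inverse,x)=\Delta(\gamma,\gamma\inverse x)\inverse$ for~\eqref{eq:bhel-7}). The exponent bookkeeping $\Delta^{1/2}(\gamma\inverse,x)\,\Delta(\gamma,\gamma\inverse x)=\Delta^{1/2}(\gamma,\gamma\inverse x)$ that you single out is exactly the step the paper carries out implicitly in its change of variables $(\gamma,\gamma\inverse x)\mapsto(\gamma\inverse,x)$.
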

\begin{proof}
Checking most of the equalities above are straightforward computations
involving obvious change of variables, using the invariances of the
families of measures, Fubini's theorem and some properties of the
adjoining function. We write two detail computations and hints for
computing the others.
Let  $\gamma\in G$, $x\in X$ and $\eta\in
H$. 

\noindent Equation~\eqref{eq:bhel-1}: 
\begin{align*}
&  ((\phi*\phi')f)(x)\\
&= \int_{G^{r_X(x)}} (\phi*\phi')(\gamma)f(\gamma^{-1}
  x)\Delta(\gamma, \gamma^{-1} x)^{1/2}\,\dd\alpha^{r_X(x)}(\gamma)\\
&= \int_{G^{r_X(x)}} \int_{G^{r_G(\gamma)}}
  \phi(\zeta)\phi'(\zeta^{-1} \gamma)f(\gamma^{-1} x) \Delta(\gamma, \gamma^{-1} x)^{1/2}\,\dd\alpha^{r_G(\gamma)}(\zeta) \,\dd\alpha^{r_X(x)}(\gamma).
\end{align*}
\noindent First apply Fubini's theorem and then change the variable $\gamma\mapsto \zeta\gamma$ and
use the invariance of $\alpha$ to
see that the last term equals
\begin{align*}
\int_{G^{r_G(\gamma)}}\int_{G^{r_X(x)}}  \phi(\zeta)\phi'(\gamma)f(\gamma^{-1}\zeta^{-1} x) \Delta(\zeta\gamma, \gamma^{-1}\zeta^{-1} x)^{1/2}\,\dd\alpha^{r_X(x)}(\gamma)\,\dd\alpha^{r_G(\gamma)}(\zeta).
\end{align*}

We observe that $(\zeta\gamma, \gamma^{-1}\zeta^{-1} x)=(\zeta,\zeta^{-1} x) (\gamma,\gamma^{-1}\zeta^{-1} x)$ in the transformation groupoid
$G\ltimes X$. This relation,
Remark~\ref{rem:Delta-is-ae-homomorphism} and the associativity of
the left action together allow us to write the previous term as
\begin{align*}
   & \int_{G^{r_G(\gamma)}}\int_{G^{r_X(x)}} 
  \phi(\zeta)\phi'(\gamma)f(\gamma^{-1}\zeta^{-1} x)\\ 
&\qquad\qquad \Delta(\zeta,\zeta^{-1} x)^{1/2}\Delta (\gamma,\gamma^{-1}\zeta^{-1} x)^{1/2} \,\dd\alpha^{r_X(x)}(\gamma)\,\dd\alpha^{r_G(\gamma)}(\zeta)\\
&=\int_{G^{r_G(\gamma)}}\phi(\zeta)\left(\int_{G^{r_X(x)}}\phi'(\gamma)f(\gamma^{-1}\zeta^{-1} x) \Delta (\gamma,\gamma^{-1}\zeta^{-1} x)^{1/2}  \,\dd\alpha^{r_X(x)}(\gamma)\right)\\
&\qquad\qquad \Delta(\zeta,\zeta^{-1} x)^{1/2}\,\dd\alpha^{r_G(\gamma)}(\zeta)\\
&=\int_{G^{r_G(\gamma)}}\phi(\zeta) \,(\phi' f)(\zeta^{-1} x)\,\Delta(\zeta,\zeta^{-1} x)^{1/2}\,\dd\alpha^{r_G(\gamma)}(\zeta)\\
&=(\phi(\phi'f))(x).
\end{align*}
\noindent Equation~\eqref{eq:bhel-2}: This computation is similar to
the above computation for Equation~\eqref{eq:bhel-1} except 
that there is no adjoining function here.

\noindent Equation~\eqref{eq:bhel-3}: This uses Fubini's theorem and
the $H$\nb-invariance of $\Delta$.

\noindent Equation~\eqref{eq:bhel-4}:  This is a direct computation.

\noindent Equation~\eqref{eq:bhel-5}: This involves a change of
variable and the right
invariance of the family of measures $\lambda$.

\noindent Equation~\eqref{eq:bhel-6}: This uses a change of variable,
the left invariance of $\beta$ and Fubini's theorem.

\noindent Equation~\ref{eq:bhel-7}:
\begin{align}\label{eq:left-action-adjble}
\langle \phi f, g\rangle(\eta) &= \int_X \overline{(\phi f)(x)} g(x\eta) \;\dd\lambda_{r_H(\eta)}(x) \\
&= \int_X\int_G \overline{\phi(\gamma)} \,\overline{f(\gamma^{-1} x)} g(x\eta) \; \Delta^{1/2}(\gamma, \gamma^{-1} x) \; \dd \alpha^{r_X(x)}(\gamma)\;\dd\lambda_{r_H(\eta)}(x) \notag\\
&= \int_X\int_G \overline{f(\gamma^{-1} x)}\, \overline{\phi(\gamma)} g(x\eta) \;
\Delta^{1/2}(\gamma, \gamma^{-1} x) \; \dd
\alpha^{r_X(x)}(\gamma)\;\dd\lambda_{r_H(\eta)}(x) .\notag
\end{align}
Make a change of variables $(\gamma, \gamma^{-1} x) \mapsto
(\gamma^{-1}, x)$.  Then we use the fact that $\Delta$ is an almost
everywhere groupoid homomorphism (see Remark~\ref{rem:Delta-is-ae-homomorphism}). Due to Remark~\ref{rem:Delta-is-right-invariant}, we
know that $\Delta$ is $H$-\nb invariant. Taking into account these facts we see that
\begin{align*}
\langle \phi f, g\rangle(\eta) 
&= \int_X\int_G \overline{f(x)}\,\overline{\phi(\gamma^{-1})} g(\gamma^{-1}
  x\eta) \;\Delta^{1/2}(\gamma, \gamma^{-1} x\eta) \; \dd\alpha^{r(x)}(\gamma)\;\dd\lambda_{r_H(\eta)}(x) \\
&= \int_X\int_G \overline{f(x)} (\phi^*g)(x\eta)\;\dd\lambda_{r_H(\eta)}(x) \\
&= \langle f, \phi^* g\rangle(\eta) . \qedhere
\end{align*}
\end{proof}

It can be seen that the left and the right actions and the map $\inpro{\,}{\,}$ are continuous in the inductive limit topology.

Equations~\eqref{eq:bhel-1}, \eqref{eq:bhel-2} and~\eqref{eq:bhel-3} show that $C_c(X)$ is a
$C_c(G)$-\nb$C_c(H)$\nb-bimodule. Equations~\eqref{eq:bhel-4},
\eqref{eq:bhel-5} and~\eqref{eq:bhel-6} show that the map $\langle
\;,\,  \rangle: C_c(X) \times C_c(X) \to C_c(H)$ is
a $C_c(H)$\nb-conjugate bilinear map. And Equation~\eqref{eq:bhel-7}
says that $C_c(G)$ acts on $C_c(X)$ by $C_c(H)$\nb-adjointable operators.

\subsection{The right action---construction of the Hilbert module}
\label{sec:left-side-hil-module}

In this subsection, we describe how to construct a $\Cst(H,\beta)$\nb-Hilbert module $\Hils(X)$, where $X$ is a proper $H$\nb-space and $\lambda$ is an $H$\nb-invariant family of measures. Indeed, writing $\Hils(X,\lambda)$ is more precise than $\Hils(X)$. But we shall not come across any case which involves the same space with different families of measures. Hence we write $\Hils(X)$.

Note that to get $\Hils(X)$ from $C_c(X)$, we only need to prove that the bilinear map is positive. The other required properties of $\inpro{}{}$ are clear from Lemma~\ref{lemma:bhel}. 

The main result of this section, namely, Theorem~\ref{prop:proper-space-gives-hilmod} is a special case of the well-know theorem of Renault~\cite{Renault1985Representations-of-crossed-product-of-gpd-Cst-Alg}*{Corrolaire 5.2}. In~\cite{Renault1985Representations-of-crossed-product-of-gpd-Cst-Alg}, Renault uses a {\Star}category of measures free and proper \(H\)\nb-spaces to prove Corollaire~{5.2}. A remark following the corollaire says that the \emph{freeness} of the action is not a necessary hypothesis; the result holds even if this hypothesis is removed. Based on this remark, a similar category that uses measured proper \(H\)\nb-spaces is constructed in~\cite{Holkar-Renault2013Hypergpd-Cst-Alg}. In~\cite{Renault1985Representations-of-crossed-product-of-gpd-Cst-Alg} and~\cite{Holkar-Renault2013Hypergpd-Cst-Alg}, the main idea of the proof is that the representations of \((H,\beta)\) induce representations of this category. These induced representations are used to complete \(C_c(X)\) into a Hilbert \(\Cst(H,\beta)\)\nb-module. Thus Theorem~\ref{prop:proper-space-gives-hilmod} follows directly from the result concerning the completion of the {\Star}category. The proof below is written for the sake of completeness; the proof uses elementary techniques.
\begin{proposition}\label{prop:inner-product-is-positive}
  Let $(H,\beta)$ be a locally compact groupoid equipped with a Haar system, $X$ a proper left $H$-space and $\lambda$ an invariant family of measures on $X$. Then the bilinear map defined by Equation~\eqref{def:inner-product} is a $C_c(H)$-valued inner product on $C_c(X)$. 
\end{proposition}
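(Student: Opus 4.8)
The plan is to observe that Lemma~\ref{lemma:bhel} already supplies every inner-product axiom except positivity, and then to prove positivity by exhibiting $\inpro{f}{f}$ as a diagonal matrix coefficient of a genuine unitary representation of $H$. Equations~\eqref{eq:bhel-4}, \eqref{eq:bhel-5} and~\eqref{eq:bhel-6} give additivity, the symmetry $\inpro{f}{g}^*=\inpro{g}{f}$ and right $C_c(H)$-linearity, while homogeneity in each variable is immediate from~\eqref{def:inner-product}; setting $g=f$ in~\eqref{eq:bhel-5} shows $\inpro{f}{f}$ is self-adjoint. Thus the entire content of the proposition is that $\inpro{f}{f}$ is a \emph{positive} element of $\Cst(H,\beta)$ for every $f\in C_c(X)$.

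The key structural remark is that the field $\{\Ltwo(X_u,\lambda_u)\}_{u\in\base}$ carries a unitary representation $R$ of $H$: for $\eta\in H$ put $R(\eta)\colon\Ltwo(X_{s_H(\eta)},\lambda_{s_H(\eta)})\to\Ltwo(X_{r_H(\eta)},\lambda_{r_H(\eta)})$, $(R(\eta)\phi)(x)=\phi(x\eta)$. Unitarity of $R(\eta)$ is precisely the invariance $\lambda_{r_H(\eta)}\eta=\lambda_{s_H(\eta)}$ of Definition~\ref{def:gpd-invariant-measure-family}, and $R(\eta\eta')=R(\eta)R(\eta')$ is associativity of the action. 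Writing $f_u\defeq f|_{X_u}$, formula~\eqref{def:inner-product} becomes $\inpro{f}{f}(\eta)=\inpro{f_{r_H(\eta)}}{R(\eta)f_{s_H(\eta)}}$, a diagonal matrix coefficient of $R$, and a one-line manipulation yields the positive-type (Gram) identity $\inpro{f}{f}(\eta_1\inverse\eta_2)=\inpro{R(\eta_1)f_{s_H(\eta_1)}}{R(\eta_2)f_{s_H(\eta_2)}}$ whenever $r_H(\eta_1)=r_H(\eta_2)$.

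To turn this positive-type property into positivity in $\Cst(H,\beta)$ I would pair $\inpro{f}{f}$ against an arbitrary representation $(\mu,\{\mathcal{H}_u\},\pi)$ of $H$, with modular function $\Delta$, and its integrated form $L$ on $\mathcal{H}^{\mathrm{int}}=\Ltwo(\base,\mu;\mathcal{H})$. Substituting the matrix-coefficient formula into the disintegrated expression for $\inpro{L(\inpro{f}{f})\xi}{\xi}$ and applying Fubini identifies it, after recognising $\mathcal{K}=\Ltwo(X,\mu\circ\lambda;s_X^*\mathcal{H})$ as the integrated Hilbert space of the tensor representation $R\otimes\pi$, with $\inpro{PG_f}{G_f}_{\mathcal{K}}$, where $G_f(x)=f(x)\,\xi(s_X(x))$ and
\[
(PG)(x)=\int_{H^{s_X(x)}}\pi(\eta)\,G(x\eta)\,\Delta(\eta)^{-1/2}\,\dd\beta^{s_X(x)}(\eta)
\]
is the integrated form of the constant function $1$ for $R\otimes\pi$, i.e. the averaging operator over $H$-orbits. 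A change of variables using the invariance of $\lambda$ together with $\Delta(\eta\inverse)=\Delta(\eta)\inverse$ shows $P=P^*$, so everything comes down to proving that $P$ is a positive operator.

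I expect the positivity of the averaging operator $P$ to be the main obstacle. Here properness of the $H$-action is decisive: it guarantees that the orbit integrals converge (for $f\in C_c(X)$ the set $\{\eta:x\eta\in\supp f\}$ is relatively compact) and it supplies a cutoff function $c\in C_c(X)^+$ with $\int_{H^{s_X(x)}}c(x\eta)\,\dd\beta^{s_X(x)}(\eta)=1$, which I would use to rewrite $\inpro{PG_f}{G_f}_{\mathcal{K}}$ as an integral of squared norms over $X/H$ and thereby read off $P\ge 0$. Since the excerpt drops second countability, the passage to integrated forms of representations is not automatic; the clean way to make the argument rigorous in this generality is the route indicated before the statement, namely to work in the \Star{}category of Renault and Holkar--Renault, where $\inpro{f}{g}=f^**_\lambda g$ and positivity of $f^**_\lambda f$ is intrinsic---the computation above being the concrete manifestation of that abstract positivity.
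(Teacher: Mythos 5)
Your proposal is correct and follows essentially the same route as the paper: all axioms except positivity are read off from Lemma~\ref{lemma:bhel}, and positivity is tested against an arbitrary disintegrated representation of $(H,\beta)$ and established by factoring through the quotient $X/H$. The paper packages your averaging operator $P$ as the composition $\langle\!\langle f|\circ|f\rangle\!\rangle$ with $|f\rangle\!\rangle$ mapping into $L^2(X/H,\nu,\mathcal{H}^X)$ and $\langle\!\langle f|$ its formal adjoint, so that positivity is automatic from the $T^*T$ form; the ingredient that makes your ``integral of squared norms over $X/H$'' work is not the cutoff function but the invariance of $\nu\circ\beta_X$ together with the cocycle identity $M(x\eta)=M(x)\,\delta(\eta)$ for $M=\dd(\mu\circ\lambda)/\dd(\nu\circ\beta_X)$ (Lemmas~\ref{lemma:positivity-1} and~\ref{lem:M_quasi-invariant}).
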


 To prove the positivity of $\inpro{}{}$ we prove the following fact: for every (non-degenerate) representation $\tilde\pi\colon \Cst(H,\beta)\to \Bound(\Hils[K])$ on a Hilbert space $\Hils[K]$, $\tilde\pi(\inpro{f}{f})\in\Bound(\Hils[K])$ is positive. However, instead of working with the algebraic representations of $
\Cst(H,\beta)$ we work with representation of the groupoid $(H,\beta)$ and prove the positivity result. Since the integration and disintegration of representations establish an equivalence between the representations of $\Cst(H,\beta)$ and $(H,\beta)$, working with the representations of $(H,\beta)$ is enough.

Before proceeding to the proof we set up few notions and notation.
\begin{figure}[htb]
  \centering
\[
\begin{tikzcd}[scale=2]
X\times_{\base}H\arrow{r}{\lambda_1}[swap]{\pi_2} \arrow{d}[swap]{\tilde\beta_X}{\pi_1}
 & H \arrow{d}[swap]{r_H}{\beta} \\
 X \arrow{r}{\lambda}[swap]{s_X} 
& \base.
 \end{tikzcd}
\]
\caption{}
\label{fig:comm-square-for-inducing-measures-on-proper-quotient-3}
\end{figure}
We draw Figure~\ref{fig:comm-square-for-inducing-measures-on-proper-quotient-3} in which $\pi_i$ for $i=1,2$ are the projections on the $i^{\textnormal{th}}$ component, $\lambda_1$ and $\tilde\beta_X$ are families of measures as in Lemma~\ref{lemma:french-1.2}, that is, for $f\in C_c(X\times_{\base}H)$ and $x\in X$ the measure $\tilde\beta^x$ is
\begin{equation}
  \label{eq:tilde-beta-on-X*H}
 \int_{X\times_{\base}H} f\,\dd\tilde\beta^x=\int_{H} f(x,\eta)\,\dd\beta^{s_X(x)}(\eta). 
\end{equation}
And $\lambda_1$ is defined similarly. Clearly, $\beta\circ\lambda_1=\lambda\circ\tilde\beta_X$.
 
 Let $m$ be a transverse measure class on $H$. We take the quotient of each space in Figure~\ref{fig:comm-square-for-inducing-measures-on-proper-quotient-3} and the corresponding induced maps and families of measures. We do the following identifications: $(X\times_{\base}H)/H\homeo X, H/H\homeo \base, [\pi_2]=s_X$ and $[\pi_1]$ is the quotient map $X\to X/H$.
\begin{enumerate}[label=\roman*)]
\label{rem:equival-of-measure-in-transv-class}
\item The coherence of $m$ gives  
$m(\beta)\circ[\lambda_1]=m(\lambda)\circ[\tilde\beta_X]$, where $m(\beta)$ and $m(\lambda)$ are the measure classes induced by $m$ on $\base$ and $X/H$, respectively. 
\item A straight forward computation gives that $[\lambda_1]=\lambda$.
\item A computation as in (ii) above gives that
$[\tilde\beta_X]=\beta_X$, where $\beta_X$ is as in
Example~\ref{ex:def-of-beta-x}. 
\item Observations (i), (ii) and (iii) above together say that  
$m(\beta)\circ\lambda=m(\lambda)\circ\beta_X$.
\end{enumerate}
Hence if $\mu\in m(\beta)$ and $\nu\in m(\lambda)$, then
\begin{equation}
  \label{eq:equi-of-meausures}
\mu\circ\lambda\sim\nu\circ\beta_X.  
\end{equation}
 The symbol $\sim$ denotes the equivalence of measures.

Reader may refer to~\cite{mythesis}*{Section 1.3} for the computations in (ii) and (iii) above.

Now Proposition~\ref{prop:inner-product-is-positive} follows from Lemma~\ref{lemma:positivity-3} and Lemma~\ref{lemma:positivity-4} below. In the following discussion, we shall write $\inpro{f}{f}_{C_c(H)}$ instead of $\inpro{f}{f}$ for $f\in C_c(X)$.

Let $(m,\Hils,p_{\Hils},\pi)$ be a representation of $(H,\beta)$ where $m$ is a transverse measure class for $H$, $p_{\Hils}\colon\Hils\to\base$ is a measurable $H$\nb-Hilbert bundle which has separable fibres and $\pi$ is the action of $H$ on fibres of $\Hils$. 

By definition, the transverse measure class $m$ induces a measure
class $m(\lambda)$ on $X/H$. We fix $\mu\in m(\beta)$ and $\nu\in m(\lambda)$, that is, $\mu$ is a measure on~$H^0$ and~$\nu$ is a measure on~$X/H$.
Furthermore, let $\Lambda\colon C_c(X)\to C_c(H^0)$ and $B_X\colon C_c(X)\to C_c(X/H)$ be the integration operators for the families of measures $\{\lambda_u\}_{u\in H^0}$ and $\{\beta_X^{[x]}\}_{[x]\in X/H}$. Equation~\ref{eq:equi-of-meausures} shows that $\nu\circ\beta_X$ and $\mu\circ\lambda$ are equivalent measures on~$X$. Let $\tilde\beta_X$ be the family of measures
along the projection map $\pi_1\colon X\times_{\base}H\to X$, $\pi_1(x,h)= x$, as in Equation~\eqref{eq:tilde-beta-on-X*H}
\begin{lemma}
\label{lemma:positivity-1}
  The measure~$\nu\circ\beta_X$ on~$X$ is $H$-invariant.
\end{lemma}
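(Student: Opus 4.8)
The plan is to establish $H$\nb-invariance of $\omega\defeq\nu\circ\beta_X$ using the definition of invariance employed throughout the paper. Write $\omega\circ\beta$ for the measure on the arrow space $X\times_{\base}H$ of the transformation groupoid $X\rtimes H$ given by $\int F\,\dd(\omega\circ\beta)=\int_X\int_{H^{s_X(x)}}F(x,\eta)\,\dd\beta^{s_X(x)}(\eta)\,\dd\omega(x)$; then invariance of $\omega$ means precisely that $\omega\circ\beta=(\omega\circ\beta)\circ\textup{inv}_{X\rtimes H}$, equivalently that its modular function equals $1$ almost everywhere. Thus it suffices to prove, for every $F\in C_c(X\times_{\base}H)$, that
\begin{align*}
\int_X\int_{H^{s_X(x)}}F(x,\eta)\,\dd\beta^{s_X(x)}(\eta)\,\dd\omega(x)
&=\int_X\int_{H^{s_X(x)}}F(x\eta,\eta\inverse)\,\dd\beta^{s_X(x)}(\eta)\,\dd\omega(x).
\end{align*}

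First I would unfold both sides by substituting $\omega=\nu\circ\beta_X$ and using the defining formula for $\beta_X$ from Example~\ref{ex:def-of-beta-x}, turning each side into an iterated integral over $[x]\in X/H$, a variable $\zeta\in H^{s_X(x)}$ sweeping the orbit via $x\mapsto x\zeta$, and the Haar variable $\eta$. Before doing so I would record the fact (already implicit in the observations preceding the lemma) that the inner integral $\int_X g\,\dd\beta_X^{[x]}$ depends only on the orbit $[x]$: replacing $x$ by $x\zeta_0$ and applying the left\nb-invariance of $\beta$ returns the same value, so that integration against $\nu$ on $X/H$ is legitimate.

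The core step is to transport the right\nb-hand side onto the left. In the unfolded right\nb-hand integrand I would substitute $\eta'=\zeta\eta$ in the innermost Haar integral and invoke the left\nb-invariance of $\beta$ (which replaces $\beta^{s_H(\zeta)}$ by $\beta^{s_X(x)}$), then apply Fubini to interchange the two $H$\nb-variables, and finally substitute once more using left\nb-invariance of $\beta$ to recognise the inner measure as $\beta^{s_X(x\eta')}$. Relabelling the variables then reproduces exactly the unfolded left\nb-hand side, yielding the desired equality. Conceptually, the computation merely expresses that each fibre measure $\beta_X^{[x]}$ is the image of the left Haar measure on the orbit and is therefore invariant along that orbit, and that averaging such invariant orbit measures against an arbitrary measure $\nu$ on $X/H$ produces an $H$\nb-invariant measure on $X$.

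The only genuine obstacle is the bookkeeping of momentum maps and composability constraints across the two changes of variables: one must keep track that $\zeta\in H^{s_X(x)}$, that $s_X(x\zeta)=s_H(\zeta)$, and that each substitution respects the relevant range/source condition, so that the left\nb-invariance of $\beta$ applies in the stated form. The identity itself uses only this left\nb-invariance together with Fubini; properness of the action, the family $\lambda$, and the transverse measure class $m$ play no role here, entering only through the prior existence of the quotient $X/H$ and the family $\beta_X$.
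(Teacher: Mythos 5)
Your proposal is correct and follows essentially the same route as the paper: unfold $\nu\circ\beta_X$ into a triple integral over $X/H$ and two Haar variables via the definition of $\beta_X$, then use left-invariance of $\beta$ together with an interchange of the two $H$-variables to identify the integral of $F$ with that of $F\circ\textup{inv}_{X\rtimes H}$. The paper phrases the middle step as a symmetry of the double Haar integral under swapping the two variables, whereas you phrase it as Fubini bracketed by two left-invariance substitutions, but these are the same computation.
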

\begin{proof}
  We must prove that the measure $\nu\circ\beta_X\circ\tilde\beta_X$ on $X\times_{\base}H$ defined by
  \[
  f\mapsto
  \int_{X/H} \int_{H^{s_X(x)}} \int_{H^{s_H(\eta)}} f(x\eta, h)
  \,{\dd}\beta^{s_H(\eta)}(h) \,{\dd}\beta^{s_X(x)}(\eta)
  \,{\dd}\nu[x]
  \]
  for $f\in C_c(X\times_{\base   } H)$ is invariant under the inversion map
  $(x,h)\mapsto (xh,h^{-1})$.  For this, we substitute $\eta^{-1}
  h$ for~$h$ and write
  \[
  \nu\circ\beta_X\circ\tilde\beta_X(f) = \int_{X/H} \int_{H^{s_X(x)}}
  \int_{H^{s_X(x)}} f(x\eta, \eta^{-1} h) \,{\dd}\beta^{s_X(x)}(h)
  \,{\dd}\beta^{s_X(x)}(\eta) \,{\dd}\nu[x];
  \]
  now replacing~$f$ by $f\circ \textup{inv}_{X\rtimes H}$ replaces $f(x\eta,
  \eta^{-1}h)$ by $f(x\eta \eta^{-1} h, (\eta^{-1} h)^{-1}) = f(x
  h, h^{-1} \eta)$.  The substitution that switches
  $h\leftrightarrow \eta$ shows that the integrals over $f$ and
  $f\circ \textup{inv}_{X\rtimes H}$ are the same.
\end{proof}

Since $\mu\circ\lambda$ is equivalent to $\nu\circ\beta_X$, this
measure on~$X$ must also be quasi-invariant.  We compute its
Radon-Nikodym derivative.  Let $f\in
C_c(X\times_{\base} H)$, then we get
\begin{align*}
  \mu\circ\lambda\circ\tilde\beta_X (f)
  &= \int_{H^0} \int_{X_u} \int_{H^u} f(x,h) \,{\dd}\beta^u(h)
  \,{\dd}\lambda_u(x) \,{\dd}\mu(u)
  \\&= \int_{H^0} \int_{H^u} \int_{X_u} f(x,h) \,{\dd}\lambda_u(x)
  \,{\dd}\beta^u(h) \,{\dd}\mu(u)
\end{align*}
by Fubini's Theorem.  When we replace $f$ by $f\circ
\textup{inv}_{X\rtimes H}$ and use the $H$-invariance of~$\lambda$, we get
\begin{align*}
  \mu\circ\lambda\circ\tilde\beta_X (f\circ\textup{inv}_{X
\rtimes H})
  &= \int_{H^0} \int_{H^u} \int_{X_u} f(x h,h^{-1})
  \,{\dd}\lambda_u(x)
  \,{\dd}\beta^u(h) \,{\dd}\mu(u)
  \\&= \int_{H^0} \int_{H^u} \int_{X_{s_H(h)}} f(x,h^{-1})
  \,{\dd}\lambda_{s_H(h)}(x)
  \,{\dd}\beta^u(h) \,{\dd}\mu(u)
  \\&= \int_{H^0} \int_{H_u} \int_{X_{r_H(h)}} f(x,h)
  \,{\dd}\lambda_{r_H(h)}(x)
  \,{\dd}\beta^{-1}_u(h) \,{\dd}\mu(u),
\end{align*}
where the last step uses the substitution $h\mapsto h^{-1}$. In terms of the integration operator $\Lambda_1\colon C_c(X\times_{\base} H)\to C_c(H)$ along~$\pi_2$, we may rewrite this as $\mu\circ\beta^{-1}(\Lambda_1(f))$, whereas
$\mu\circ\lambda\circ\tilde\beta_X (f) = \mu\circ\beta(\Lambda_1(f))$.
Thus the Radon-Nikodym derivative is
\[
\frac{{\dd}\,\textup{inv}_{X\rtimes H}^*(\mu\circ\lambda\circ\tilde\beta_X)}
{{\dd}(\mu\circ\lambda\circ\tilde\beta_X)} (x,h)
= \frac{{\dd}(\mu\circ\beta^{-1})} {{\dd}(\mu\circ\beta)}
(h).
\]
Now let
\[
M(x) = \frac{{\dd} (\mu\circ\lambda)}
{{\dd} (\nu\circ\beta)}.
\]
\begin{lemma}
\label{lemma:positivity-2}
  \label{lem:M_quasi-invariant}
  Let $x\in X$ and $h\in H$ satisfy $s_X(x)=r_H(h)$.  Then
  \[
  M(x h) = M(x) \frac{{\dd}(\mu\circ\beta^{-1})}
  {{\dd}(\mu\circ\beta)} (h).
  \]
\end{lemma}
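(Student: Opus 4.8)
The plan is to extract the asserted cocycle identity directly from the two facts already in hand: the factorisation $\mu\circ\lambda = M\cdot(\nu\circ\beta_X)$ by the definition of $M$, and the $H$\nb-invariance of $\nu\circ\beta_X$ proved in Lemma~\ref{lemma:positivity-1}. Throughout I work in the transformation groupoid $X\rtimes H$, whose arrows are the pairs $(x,h)$ with $s_X(x)=r_H(h)$, with range $x$, source $xh$ and inversion $\textup{inv}_{X\rtimes H}(x,h)=(xh,h^{-1})$; all equalities below are understood $\mu\circ\lambda\circ\tilde\beta_X$\nb-almost everywhere, which is harmless since $M$, being a Radon--Nikodym derivative between equivalent measures, is almost everywhere strictly positive.

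First I would induce the factorisation up along $\tilde\beta_X$ to the total space $X\times_{\base}H$. Because $M$ is a function of the range variable $x$ alone, a direct computation with Equation~\eqref{eq:tilde-beta-on-X*H} gives
\[
\mu\circ\lambda\circ\tilde\beta_X \;=\; (M\circ\pi_1)\cdot(\nu\circ\beta_X\circ\tilde\beta_X),
\]
so that the Radon--Nikodym derivative of $\tilde P:=\mu\circ\lambda\circ\tilde\beta_X$ with respect to $\tilde Q:=\nu\circ\beta_X\circ\tilde\beta_X$ at a point $(x,h)$ is $M(x)$.

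Next I would apply $\textup{inv}_{X\rtimes H}^*$ and use the chain rule for pull-backs, $\textup{inv}_{X\rtimes H}^*(\varphi\cdot\nu)=(\varphi\circ\textup{inv}_{X\rtimes H})\cdot\textup{inv}_{X\rtimes H}^*\nu$. Since the range becomes the source under inversion, $M\circ\pi_1$ pulls back to the function $(x,h)\mapsto M(xh)$; and since $\tilde Q$ is $\textup{inv}_{X\rtimes H}$\nb-invariant by Lemma~\ref{lemma:positivity-1}, one gets
\[
\textup{inv}_{X\rtimes H}^*\tilde P \;=\; (M\circ s)\cdot\tilde Q \;=\; \frac{M(xh)}{M(x)}\,\tilde P .
\]
Thus the Radon--Nikodym derivative $\dd(\textup{inv}_{X\rtimes H}^*\tilde P)/\dd\tilde P$ equals $M(xh)/M(x)$ at $(x,h)$. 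Comparing with the paragraph immediately preceding the lemma, where this very derivative was identified with $\dd(\mu\circ\beta^{-1})/\dd(\mu\circ\beta)(h)$, and multiplying through by $M(x)$, yields the claim.

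I expect the only delicate point to be the interchange of range and source under $\textup{inv}_{X\rtimes H}$: it is exactly the identity $r\circ\textup{inv}_{X\rtimes H}(x,h)=xh=s(x,h)$ that turns $M(x)$ into $M(xh)$ and so manufactures the cocycle relation, and reversing the two would give a spurious reciprocal. A secondary matter is to phrase everything as almost-everywhere equalities and to invoke positivity of $M$ before dividing; no analytic difficulty arises beyond this bookkeeping.
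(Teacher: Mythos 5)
Your argument is correct and is essentially the paper's own proof: both lift the Radon--Nikodym factorisation along $\tilde\beta_X$ to $X\times_{\base}H$ (using that $M$ depends only on the first coordinate), invoke the $\textup{inv}_{X\rtimes H}$\nb-invariance of $\nu\circ\beta_X\circ\tilde\beta_X$ from Lemma~\ref{lemma:positivity-1}, and identify the resulting derivative $M(xh)/M(x)$ with $\dd(\mu\circ\beta^{-1})/\dd(\mu\circ\beta)(h)$ via the computation preceding the lemma. The only difference is cosmetic: you phrase the steps as pull-backs of measures and the chain rule for densities, where the paper integrates a test function $g$ and reads off the same identity.
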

\begin{proof}
  Let $g\in C_c(X\times_{\base}H)$ and let $f = \tilde\beta_X(g)$, that is, $f(x) = \int_{H^{s_X(x)}} f(x,h)\,{\dd}\beta^{s_X(x)}(h)$. By definition of the Radon-Nikodym derivative, we have
  \[
  \int_X f(x) \,{\dd}(\nu\circ\beta)(x)
  = \int_X f(x) M(x)^{-1} \,{\dd}(\mu\circ\lambda)(x).
  \]
  Thus
  \[
  \int_{X\times_{\base}H} g(x,h) \,{\dd}(\nu\circ\beta\circ\tilde\beta_X)(x,h)
  = \int_{X\times_{\base}H} g(x,h) M(x)^{-1} \,{\dd}(\mu\circ\lambda\circ\tilde\beta_X)(x,h).
  \]
  Since the measure $\nu\circ\beta$ is $H$-invariant, the left
  hand side is invariant under replacing $g$
  by~$g\circ\textup{inv}_{X\rtimes H}$. Hence so is the right-hand side, that
  is,
  \begin{align*}
    &\int_X g(x,h) M(x)^{-1}
    \,{\dd}(\mu\circ\lambda\circ\tilde\beta_X)(x,h)\\
    &= \int_X g(xh,h^{-1}) M(x)^{-1} 
    \,{\dd}(\mu\circ\lambda\circ\tilde\beta_X)(x,h)
    \\&= \int_X g(xh,h^{-1}) M(x h)^{-1} \frac{M(x h)}{M(x)}
    \,{\dd}(\mu\circ\lambda\circ\tilde\beta_X)(x,h).
  \end{align*}
  Letting $g'(x,h) = g(x,h) M(x)^{-1}$, we see that $M(x
  h)/M(x)$ has to be the Radon-Nikodym derivative
  \[
  \frac{M(x h)}{M(x)} =
  \frac{{\dd}(\textup{inv}_{X\rtimes H}^* \mu\circ\lambda\circ\tilde\beta_X)}
  {{\dd}(\mu\circ\lambda\circ\tilde\beta_X)}(x,h)
  = \frac{{\dd}(\mu\circ\beta^{-1})}
  {{\dd}(\mu\circ\beta)}(h).\qedhere
  \]
\end{proof}

Recall that $\mathcal{H}=(\mathcal{H}_x)_{x\in X}$ is a $\mu$-measurable
field of Hilbert spaces over~$\base$ equipped with a
representation~$\pi$ of~$H$.  The Hilbert space
$L^2(\base,\mu,\mathcal{H})$ consists of all $\mu$-measurable
sections $\xi\colon \base\to \mathcal{H}$ such that
\[
\int_{\base} \lVert\xi(u)\rVert_{\mathcal{H}_u}^2 \,{\dd}\mu(u) <\infty.
\]
This norm comes from an inner product on~$L^2(\base,\mu,\mathcal{H})$,
of course.

We pull back~$\mathcal{H}$ to a field~$s_X^*\mathcal{H}$ of Hilbert
spaces over~$X$ along $s_X\colon X\to \base$.  Then we take the
induced field of Hilbert spaces~$\mathcal{H}^X$ over~$X/H$ whose
$\mu\circ \lambda$-measurable sections are those sections~$\zeta$
of~$s^*\mathcal{H}$ that satisfy $\pi_h(\zeta(x h)) = \zeta(x)$
for all $x\in X$, $h\in H$ with $s_X(x)=r_H(h)$.  For~$\nu$ as
above, we define the Hilbert space $L^2(X/H,\nu,\mathcal{H}^X)$ to
consist of those sections~$\zeta$ of~$\mathcal{H}^X$ with
\[
\int_{X/H} \lVert \zeta(x)\rVert^2_{\mathcal{H}_{s_X(x)}}
\,{\dd}\nu[x]<\infty.
\]
The function $\lVert\zeta(x)\rVert^2_{\mathcal{H}_{s_X(x)}}$ is constant on
$H$-orbits and thus descends to~$X/H$ because $\pi_h(\zeta(x h))
= \zeta(x)$ and the operators~$\pi_h$ are unitary.  The norm
defining $L^2(X/H,\nu,\mathcal{H}^X)$ comes from an obvious inner
product.  Notice that an element of $L^2(X/H,\nu,\mathcal{H}^X)$ is
\emph{not} a function on~$X/H$.

Now we define the operator $|f\rangle\!\rangle$ from
$L^2(\base,\mu,\mathcal{H})$ to $L^2(X/H,\nu,\mathcal{H}^X)$ and its
adjoint~$\langle\!\langle f|$.  Let $\xi \in
L^2(\base,\mu,\mathcal{H})$ and $\zeta \in
L^2(X/H,\nu,\mathcal{H}^X)$.  Computations by Renault which are
discussed in~\cite{Holkar-Renault2013Hypergpd-Cst-Alg} or~\cite{mythesis}*{Section 3.3.1} lead to the following formulae for
$\langle\!\langle f|$ and $|f\rangle\!\rangle$:
\begin{align}
  \label{eq:llf}
  (|f\rangle\!\rangle\xi)([x]) &=
  \int_{H^{s_X(x)}} f(x \eta) \pi_\eta(\xi(s_H(\eta)))
  \sqrt{M(x \eta)} \,{\dd}\beta^{s_X(x)}(\eta),\\
  (\langle\!\langle f|\zeta)(u) &=
  \int_{X_u} \overline{f(x)}\zeta(x)
  \frac{1}{\sqrt{M(x)}} \,{\dd}\lambda_u(x).
\end{align}
Notice that
\[
\pi_h(|f\rangle\!\rangle\xi(x h))
= \int_{H^{s_X(x)}} f(x h \eta) \pi_{h \eta}(\xi(s_H(\eta)))
\sqrt{M(x h \eta)} \,{\dd}\beta^{s_X(x)}(\eta)
= |f\rangle\!\rangle\xi(x)
\]
by the substitution $h\eta\mapsto \eta$ because~$\beta$ is
left-invariant.  Thus $|f\rangle\!\rangle\xi$ is a section
of~$\mathcal{H}^X$. Let $C_c(X;\Hils)_0$ and $C_c(X;\Hils)$ have similar meanings as $C_c(X)_0$ and $C_c(X)$, respectively, as on page~\pageref{page:Cc}. If we pick $\xi,\zeta\in C_c(X;\Hils)$, then $|f\rangle\!\rangle\xi\in C_c(X/H)$ and $\langle\!\langle f|\zeta\in C_c(\base)$.  Hence our operators $|f\rangle\!\rangle$ and $\langle\!\langle f|$
are well-defined on dense subspaces.

\begin{lemma}
  \label{lemma:positivity-3}
 Let $\xi,\zeta\in C_c(X;\Hils)$. Then $\langle
  \zeta, |f\rangle\!\rangle\xi\rangle = \langle\, \langle\!\langle
  f|\zeta, \xi\rangle$, that is, $\langle\!\langle f|$ is formally
  adjoint to $|f\rangle\!\rangle$.
\end{lemma}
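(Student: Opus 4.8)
The plan is to expand both sides against their defining $L^2$\nobreakdash-inner products and show that each is computed by one and the same integral over~$X$. I would start from the left-hand side $\langle\zeta, |f\rangle\!\rangle\xi\rangle$, which is the inner product in $L^2(X/H,\nu,\mathcal{H}^X)$, namely $\int_{X/H}\inpro{\zeta(x)}{(|f\rangle\!\rangle\xi)([x])}_{\mathcal{H}_{s_X(x)}}\dd\nu[x]$ (this is legitimate because the pointwise inner product of two sections of $\mathcal{H}^X$ is $H$\nobreakdash-invariant and so descends to $X/H$). Substituting Formula~\eqref{eq:llf} and pulling the Hilbert-space inner product through the $\beta^{s_X(x)}$\nobreakdash-integral, the scalar factors $f(x\eta)$ and $\sqrt{M(x\eta)}$ come out of the second slot (by linearity in the second argument), leaving $\inpro{\zeta(x)}{\pi_\eta(\xi(s_H(\eta)))}$ inside.

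The first key move uses that $\zeta$ is a section of $\mathcal{H}^X$, so $\pi_\eta(\zeta(x\eta)) = \zeta(x)$, together with the unitarity of $\pi_\eta$, to rewrite $\inpro{\zeta(x)}{\pi_\eta(\xi(s_H(\eta)))} = \inpro{\zeta(x\eta)}{\xi(s_H(\eta))}$. Since $s_H(\eta) = s_X(x\eta)$, the entire integrand then depends only on the point $y = x\eta \in X$ through the single function $\Phi(y) = f(y)\inpro{\zeta(y)}{\xi(s_X(y))}\sqrt{M(y)}$, and the iterated integral $\int_{X/H}\int_{H^{s_X(x)}}\Phi(x\eta)\,\dd\beta^{s_X(x)}(\eta)\,\dd\nu[x]$ is exactly integration of $\Phi$ against $\nu\circ\beta_X$, by the description of $\beta_X$ in Example~\ref{ex:def-of-beta-x}.

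The second key move converts the measure. By Equation~\eqref{eq:equi-of-meausures} and the definition of $M$ as $\dd(\mu\circ\lambda)/\dd(\nu\circ\beta_X)$, I replace $\dd(\nu\circ\beta_X)$ by $M^{-1}\dd(\mu\circ\lambda)$; the factor $\sqrt{M}$ sitting in $\Phi$ then combines with $M^{-1}$ to give $M^{-1/2}$. This matching of exponents is precisely what makes the two operators adjoint, and it is the reason the square roots were inserted into~\eqref{eq:llf}. Disintegrating $\mu\circ\lambda$ along $\lambda$ turns the integral into $\int_{\base}\int_{X_u} f(x)\inpro{\zeta(x)}{\xi(u)}M(x)^{-1/2}\,\dd\lambda_u(x)\,\dd\mu(u)$ (using $s_X(x)=u$ on $X_u$). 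Moving the scalar $\overline{f(x)}M(x)^{-1/2}$ into the first slot of the inner product and recognizing the inner $\lambda_u$\nobreakdash-integral as $(\langle\!\langle f|\zeta)(u)$ from the second displayed formula gives $\int_{\base}\inpro{(\langle\!\langle f|\zeta)(u)}{\xi(u)}\,\dd\mu(u) = \langle\, \langle\!\langle f|\zeta, \xi\rangle$, as required.

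Throughout, interchanges in the order of integration are justified by Fubini, which applies since $\xi,\zeta\in C_c(X;\mathcal{H})$ and $f\in C_c(X)$ force a compactly supported, bounded integrand against Radon families of measures. The main obstacle I anticipate is bookkeeping rather than anything conceptual: keeping the conjugation and linearity conventions of the inner product consistent when scalars pass in and out of the two slots (so that $f$ picks up a bar only in the final step, while the positive factors $\sqrt{M}$ do not), and verifying that $\sqrt{M}\cdot M^{-1}$ reproduces exactly the $M^{-1/2}$ that appears in $\langle\!\langle f|$. The invariance identity $\pi_\eta(\zeta(x\eta)) = \zeta(x)$ with unitarity of $\pi$ is what legitimizes the first step, and everything downstream rests on the measure identity $\mu\circ\lambda\sim\nu\circ\beta_X$ already established in Equation~\eqref{eq:equi-of-meausures}.
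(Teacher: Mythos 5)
Your proposal is correct and follows essentially the same route as the paper's proof: expand against the $L^2$\nobreakdash-inner products, use $\pi_\eta(\zeta(x\eta))=\zeta(x)$ with unitarity to move $\zeta$ along the orbit, recognize the iterated integral as integration against $\nu\circ\beta_X$, and convert to $\mu\circ\lambda$ via the Radon--Nikodym derivative $M$ so that $\sqrt{M}$ and $\sqrt{M}^{-1}$ match up. The only cosmetic difference is that the paper reduces both sides separately to the common integral $\int_X \langle\zeta(x),\xi(s_X(x))\rangle f(x)\sqrt{M(x)}\,\dd(\nu\circ\beta)(x)$, whereas you transform the left side into the right in a single chain.
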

\begin{proof}
On the one hand,
  \begin{align*}
    \langle \zeta, |f\rangle\!\rangle\xi\rangle
    &= \int_{X/H} \langle \zeta(x),|f\rangle\!\rangle\xi(x)\rangle
    \,{\dd}\nu(x)
    \\&= \int_{X/H} \int_{H^{s_X(x)}} \langle \zeta(x), f(x\eta)
    \pi_\eta\xi(s_H(\eta))\rangle \sqrt{M(x\eta)}
    \,{\dd}\beta^{s_X(x)}(\eta) \,{\dd}\nu[x]
    \\&= \int_{X/H} \int_{H^{s_X(x)}} \langle \zeta(x\eta),
    \xi(s_H(\eta))\rangle f(x\eta) \sqrt{M(x\eta)}
    \,{\dd}\beta^{s_X(x)}(\eta) \,{\dd}\nu[x]
    \\&= \int_{X/H} \int_{H^{s_X(x)}} \langle \zeta(x),
    \xi(s_X(x))\rangle f(x) \sqrt{M(x)}
    \,{\dd}(\nu\circ\beta)(x),
  \end{align*}
  where we used $\pi_h\zeta(xh) = \zeta(x)$, the unitarity
  of~$\pi_h$, and the definition of the measure $\nu\circ\beta$
  on~$X$.  On the other hand,
  \begin{align*}
    \langle\, \langle\!\langle f|\zeta, \xi\rangle
    &= \int_{\base} \langle\, \langle\!\langle f|\zeta(u), \xi(u)\rangle
    \,{\dd}\mu(u)
    \\&= \int_{\base} \int_{X_u} \langle
    \overline{f(x)}\zeta(x)\sqrt{M(x)}^{-1}, \xi(u)\rangle
    \,{\dd}\lambda_u(x) \,{\dd}\mu(u)
    \\&= \int_X \langle \zeta(x), \xi(s_X(x))\rangle f(x) \sqrt{M(x)}^{-1}
    \,{\dd}(\mu\circ \lambda)(x)
    \\&= \int_X \langle \zeta(x), \xi(s_X(x))\rangle f(x) \sqrt{M(x)}^{-1}
    \frac{{\dd}(\mu\circ \lambda)}
    {{\dd}(\nu\circ\beta)}(x)
    \,{\dd}(\nu\circ\beta)(x).
  \end{align*}
  Now the definition of~$M$ shows that this is the same as the
  previous integral.
\end{proof}

The convolution algebra $C_c(H)$ acts on
$L^2(\base,\mu,\mathcal{H})$ by
\[
L(f)\xi(u) = \int_{H^u} f(\eta) \pi_\eta\xi(s_H(\eta))
\sqrt{\frac{{\dd}(\mu\circ\beta^{-1})}
{{\dd}(\mu\circ\beta)}}(\eta) \,{\dd}\beta^u(\eta).
\]
This is a $^*$-representation.
\begin{lemma}
\label{lemma:positivity-4}
  Let~$\xi\in C_c(X;\Hils)$.  Then $\langle\!\langle f|\circ
  |f\rangle\!\rangle(\xi) = L(\langle f,f\rangle_{C_c(H)})(\xi)$.
  Hence $\langle\!\langle f|\circ |f\rangle\!\rangle$ extends to a
  bounded operator with norm at most $\lVert\langle f,f\rangle
  \rVert_{C^*(H)}$.  It follows that $|f\rangle\!\rangle$ and
  $\langle\!\langle f|$ extend to bounded operators between the
  Hilbert spaces $L^2(\base,\mu,\mathcal{H})$ and
  $L^2(X/H,\nu,\mathcal{H}^X)$ which are adjoints of one another.
\end{lemma}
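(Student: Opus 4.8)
The plan is to prove the operator identity $\LLRR{f}{f}(\xi)=L(\inpro{f}{f}_{C_c(H)})(\xi)$ by a direct computation on the dense subspace of sections coming from $C_c(X;\mathcal{H})$, and then to deduce every boundedness and adjointness claim from this identity together with Lemma~\ref{lemma:positivity-3}.

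First I would evaluate the left-hand side at a unit $u\in\base$. Using the formula for $\LL{f}$ applied to the section $\RR{f}\xi$ and substituting~\eqref{eq:llf} for $(\RR{f}\xi)([x])$, one obtains the iterated integral over $x\in X_u$ and $\eta\in H^{s_X(x)}$ of $\overline{f(x)}\,f(x\eta)\,\pi_\eta(\xi(s_H(\eta)))\,\sqrt{M(x\eta)}\,M(x)^{-1/2}$ against $\dd\beta^{s_X(x)}(\eta)\,\dd\lambda_u(x)$. The decisive algebraic step is to collapse the two modular factors: on the domain of integration one has $s_X(x)=r_H(\eta)$, so Lemma~\ref{lemma:positivity-2} gives $M(x\eta)/M(x)=\frac{\dd(\mu\circ\beta^{-1})}{\dd(\mu\circ\beta)}(\eta)$, and hence $\sqrt{M(x\eta)}\,M(x)^{-1/2}=\sqrt{\frac{\dd(\mu\circ\beta^{-1})}{\dd(\mu\circ\beta)}(\eta)}$, which is exactly the weight occurring in $L$.

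Next, since $x\in X_u$ forces $s_X(x)=u$, the inner integral runs over $H^u$ against $\beta^u$; Fubini's theorem (valid because $f$ and $\xi$ are compactly supported) then exchanges the two integrations, giving an outer integral over $\eta\in H^u$ and an inner one over $x\in X_{r_H(\eta)}=X_u$. The inner integral $\int_{X_u}\overline{f(x)}f(x\eta)\,\dd\lambda_u(x)$ is $\inpro{f}{f}(\eta)$ by~\eqref{def:inner-product}, and the entire expression is now the defining formula of $L(\inpro{f}{f})(\xi)(u)$. This proves the identity.

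The remaining statements are then soft. Since $L$ is a $^*$\nb-representation of $C_c(H)$ one has $\lVert L(g)\rVert\le\lVert g\rVert_{\Cst(H)}$, so $\LLRR{f}{f}=L(\inpro{f}{f})$ is bounded with norm at most $\lVert\inpro{f}{f}\rVert_{\Cst(H)}$. For $\RR{f}$ itself, take $\xi\in C_c(X;\mathcal{H})$ and apply Lemma~\ref{lemma:positivity-3} with $\zeta=\RR{f}\xi$ to get $\lVert\RR{f}\xi\rVert^2=\inpro{\LL{f}\RR{f}\xi}{\xi}=\inpro{L(\inpro{f}{f})\xi}{\xi}$, which by the Cauchy--Schwarz inequality together with the bound $\lVert L(\inpro{f}{f})\rVert\le\lVert\inpro{f}{f}\rVert_{\Cst(H)}$ is at most $\lVert\inpro{f}{f}\rVert_{\Cst(H)}\,\lVert\xi\rVert^2$. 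Thus $\RR{f}$ is bounded on the dense subspace and extends to a bounded operator $L^2(\base,\mu,\mathcal{H})\to L^2(X/H,\nu,\mathcal{H}^X)$; Lemma~\ref{lemma:positivity-3} then identifies the corresponding extension of $\LL{f}$ with its Hilbert-space adjoint, so the two operators are mutually adjoint. The only genuine work is the first computation, and within it the matching of the product $\sqrt{M(x\eta)}\,M(x)^{-1/2}$ with the Radon--Nikodym weight of $L$ via Lemma~\ref{lemma:positivity-2}; the measure-theoretic bookkeeping (Fubini and measurability of the integrands in the non-Hausdorff $C_c$ setting) is routine given the compact supports.
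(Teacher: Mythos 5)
Your proposal is correct and follows essentially the same route as the paper's proof: substitute the formula for $\RR{f}\xi$ into that for $\LL{f}$, use Lemma~\ref{lemma:positivity-2} to replace $\sqrt{M(x\eta)}\,M(x)^{-1/2}$ by the Radon--Nikodym weight $\sqrt{\delta(\eta)}$ appearing in $L$, apply Fubini, and identify the inner integral with $\inpro{f}{f}(\eta)$. The boundedness and adjointness deductions via Lemma~\ref{lemma:positivity-3} are also exactly the paper's argument.
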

\begin{proof}
  We compute
  \begin{align*}
    \langle\!\langle f|\circ |f\rangle\!\rangle(\xi) (u)
    &= \int_{X_u} \overline{f(x)} |f\rangle\!\rangle(\xi) (x)
    \sqrt{M}^{-1}(x) \,{\dd}\lambda_u(x)
    \\&= \int_{X_u} \int_{H^u} \overline{f(x)} f(x\eta)
    \pi_\eta(\xi(s_H(\eta))) \sqrt{M}(x\eta) \sqrt{M}^{-1}(x)
    \,{\dd}\beta^u(\eta) \,{\dd}\lambda_u(x)
  \end{align*}
  Now we use Lemma~\ref{lem:M_quasi-invariant} to identify
  $M(x\eta)/M(x)$ with the function
  \[
  \delta(\eta) =
  \frac{{\dd}(\mu\circ\beta^{-1})}
  {{\dd}(\mu\circ\beta)} (\eta).
  \]
  Then we use Fubini's Theorem and continue the computation:
  \begin{align*}
    \langle\!\langle f|\circ |f\rangle\!\rangle(\xi) (u)
    &= \int_{H^u} \int_{X_u} \overline{f(x)} f(x\eta)
    \pi_\eta(\xi(s_H(\eta))) \sqrt{\delta(\eta)}
    \,{\dd}\lambda_u(x) \,{\dd}\beta^u(\eta)
    \\&= \int_{H^u} \langle f,f\rangle_{C_c(H)}(\eta)
    \pi_\eta(\xi(s_H(\eta))) \sqrt{\delta(\eta)}
    \,{\dd}\beta^u(\eta)
    = L(\langle f,f\rangle_{C_c(H)})(\xi).
  \end{align*}
  Since $L(\langle f,f\rangle_{C_c(H)})$ is bounded, it follows that
  $\langle\!\langle f|\circ |f\rangle\!\rangle$ extends to a bounded
  operator on $L^2(\base,\mu,\mathcal{H})$.  Let~$C>0$ be its norm.
  Then
  \[
  \lVert |f\rangle\!\rangle\xi \rVert^2 =
  \lvert\langle \xi, \langle\!\langle f|\circ
  |f\rangle\!\rangle\xi\rangle\rvert
  \le C \lVert\xi\rVert^2
  \]
  by Lemma~\ref{lemma:positivity-3} for all $\xi\in C_c(X,\Hils)$.  Hence~$|f\rangle\!\rangle$ extends to a bounded
  operator from $L^2(\base,\mu,\mathcal{H})$ to
  $L^2(X/H,\nu,\mathcal{H}^X)$.  A similar estimate shows that
  $\langle\!\langle f|$ extends to a bounded operator from
  $L^2(X/H,\nu,\mathcal{H}^X)$ to $L^2(\base,\mu,\mathcal{H})$.
\end{proof}

\begin{proof}[Proof of
  Proposition~\ref{prop:inner-product-is-positive}]
Follows from Lemma~\ref{lemma:positivity-4}.\hspace{5cm}\qedhere
\end{proof}

The last proposition shows that  $C_c(X)$ is a $\Cst(H,\beta)$\nb-pre-Hilbert module. Let
$\Hils(X)$ denote the $\Cst(H,\beta)$\nb-Hilbert module obtained by
completing $C_c(X)$.
\begin{theorem}[See~\cite{Renault1985Representations-of-crossed-product-of-gpd-Cst-Alg}*{Corrolaire 5.2}, or the discussion above]\label{prop:proper-space-gives-hilmod}
Let $(H,\beta)$ be a locally compact groupoid with a Haar system and let $X$ be a locally compact proper right $H$\nb-space carrying an $H$\nb-invariant continuous family of measures $\lambda$. Then using Formulae~\eqref{def:left-right-action} and~\eqref{def:inner-product} the right inner product $C_c(H)$\nb-module over the pre-\(\Cst\)\nb-algebra $C_c(X)$ can be completed to a $\Cst(H,\beta)$\nb-Hilbert module $\Hils(X)$.
\end{theorem}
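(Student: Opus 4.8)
The plan is to observe that all the structure required of a Hilbert module has already been assembled in the preceding results, and then to invoke the standard completion procedure for semi-inner-product modules, the one genuinely new ingredient being the passage from $C_c(H)$ to the full algebra $\Cst(H,\beta)$. First I would record the algebraic skeleton supplied by Lemma~\ref{lemma:bhel}: Equations~\eqref{eq:bhel-2} and~\eqref{eq:bhel-4} make $C_c(X)$ a right $C_c(H)$-module on which $\inpro{\,}{\,}$ is additive and $C_c(H)$-linear in the second variable, while Equations~\eqref{eq:bhel-5} and~\eqref{eq:bhel-6} give the conjugate-symmetry $\inpro{f}{g}^*=\inpro{g}{f}$ and the compatibility $\inpro{f}{g}*\psi=\inpro{f}{g\psi}$. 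Regarding $C_c(H)$ as a dense $*$-subalgebra of $\Cst(H,\beta)$, this exhibits $(C_c(X),\inpro{\,}{\,})$ as a semi-inner-product $C_c(H)$-module in the sense of~\cite{Lance1995Hilbert-modules}.

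The only substantial hypothesis of the definition is positivity, and this is exactly the content of Proposition~\ref{prop:inner-product-is-positive}: for every $f\in C_c(X)$ the element $\inpro{f}{f}$ is positive in $\Cst(H,\beta)$. With positivity in hand, the $\Cst$-module Cauchy–Schwarz inequality $\inpro{f}{g}^*\inpro{f}{g}\le \norm{\inpro{f}{f}}\,\inpro{g}{g}$ holds automatically, so $\norm{f}\defeq \norm{\inpro{f}{f}}^{1/2}$ is a seminorm and the null space $N=\{f:\inpro{f}{f}=0\}$ is a $C_c(H)$-submodule of $C_c(X)$ on which the form vanishes by Cauchy–Schwarz. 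Forming the quotient $C_c(X)/N$ and completing it in this norm yields a Banach space $\Hils(X)$; the form descends to a definite inner product on $C_c(X)/N$ and extends continuously to $\Hils(X)$, again by Cauchy–Schwarz.

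The remaining point, and the one I would treat most carefully, is that the right action must be promoted from $C_c(H)$ to all of $\Cst(H,\beta)$. For $\psi\in C_c(H)$ the identity $\inpro{f\psi}{f\psi}=\psi^**\inpro{f}{f}*\psi$, a direct consequence of Equations~\eqref{eq:bhel-5} and~\eqref{eq:bhel-6}, combined with submultiplicativity of the $\Cst$-norm gives $\norm{f\psi}=\norm{\inpro{f\psi}{f\psi}}^{1/2}\le \norm{\psi}_{\Cst(H,\beta)}\,\norm{f}$. Hence $f\mapsto f\psi$ descends to a bounded operator on $C_c(X)/N$, extends to $\Hils(X)$, and by density of $C_c(H)$ in $\Cst(H,\beta)$ the action extends to the whole algebra while preserving associativity and the compatibility $\inpro{f\psi}{g}=\inpro{f}{g\psi^*}$. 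This is precisely where the topology of $\Cst(H,\beta)$, rather than merely the $*$-algebra $C_c(H)$, enters, so it constitutes the genuine content of the completion; everything else is a transcription of Lemma~\ref{lemma:bhel} and Proposition~\ref{prop:inner-product-is-positive} through the standard machinery of~\cite{Lance1995Hilbert-modules}.
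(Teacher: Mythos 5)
Your argument is correct and takes essentially the same route as the paper: the paper likewise combines Lemma~\ref{lemma:bhel} (for the module structure, symmetry and compatibility of $\inpro{\,}{\,}$) with Proposition~\ref{prop:inner-product-is-positive} (for positivity), then declares $C_c(X)$ a pre-Hilbert $\Cst(H,\beta)$-module and defines $\Hils(X)$ as its completion, leaving the quotient-by-null-vectors and the extension of the right action from $C_c(H)$ to $\Cst(H,\beta)$ to the standard machinery of~\cite{Lance1995Hilbert-modules}. You have simply written out those standard steps explicitly, and your derivation of $\inpro{f\psi}{f\psi}=\psi^**\inpro{f}{f}*\psi$ from~\eqref{eq:bhel-5} and~\eqref{eq:bhel-6} is the correct way to obtain the boundedness needed for that extension.
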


\subsection{The left action and construction of the \texorpdfstring{$\Cst$}{C*}-correspondence}
\label{sec:left-act-main-thm}

Now we turn our attention to the left action. We wish to extend the action of
$C_c(G)$ on $C_c(X)$ to an action of $\Cst(G)$ on $\Hils(X)$. For a
groupoid equivalence the adjoining function vanishes (see Example~\ref{exa:ME-is-top-corr}), that is, it becomes the constant
function $1$, and the formulae for the left actions in
Definition~\ref{def:left-right-action} and~\cite{Muhly-Renault-Williams1987Gpd-equivalence} match. In this case, $C_c(G)$ acts on $C_c(X)$ by $\Cst(H,\beta)$\nb-adjointable operators. Our proof for the non-free case runs along the same lines
as in~\cite{Muhly-Renault-Williams1987Gpd-equivalence}.
\begin{lemma}\label{lemma:left-action-self-adj}
The action of\; $C_c(G)$ on $C_c(X)$ defined by
Equation~\ref{def:left-right-action} extends to a non-degenerate 
\Star{}homomorphism from~$\Cst(G,\alpha)$ to $\Bound_{\Cst(H)}(\Hils(X))$.
\end{lemma}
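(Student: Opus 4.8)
The plan is to follow the representation\nb-theoretic route already used to prove Proposition~\ref{prop:inner-product-is-positive}. By Lemma~\ref{lemma:bhel}, in particular Equation~\eqref{eq:bhel-7}, we already know that $C_c(G)$ acts on $C_c(X)$ by \emph{formally} adjointable operators, with $\phi^{*}$ implementing the adjoint of $\phi$. Two things remain: boundedness of this action for the Hilbert module norm (so that it extends to $\Bound_{\Cst(H)}(\Hils(X))$ and, being contractive for the $\Cst(G,\alpha)$\nb-norm, passes to $\Cst(G,\alpha)$), and non\nb-degeneracy. By the universal property of $\Cst(G,\alpha)$ it suffices to produce, for each representation of $(H,\beta)$, a bound $\lVert\phi f\rVert\le\lVert\phi\rVert_{\Cst(G,\alpha)}\lVert f\rVert$; I would obtain this by realising the left action inside a genuine groupoid representation of $G$.

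First I would fix a representation $(m,\mathcal H,p_{\mathcal H},\pi)$ of $(H,\beta)$, together with $\mu\in m(\beta)$ and $\nu\in m(\lambda)$, producing the Hilbert spaces $L^2(\base,\mu,\mathcal H)$ and $L^2(X/H,\nu,\mathcal H^X)$, the \Star{}representation $L$ of $C_c(H)$, and the operator $\RR{f}$ with $\RR{f}^{*}\RR{f}=L(\inpro{f}{f})$ from Lemma~\ref{lemma:positivity-4}. The bispace axioms give $s_X(\gamma x)=s_X(x)$, so the left $G$\nb-action preserves the $s_X$\nb-fibres, commutes with $H$, descends to $X/H$, and acts trivially on the fibres of $\mathcal H^X$ because $\mathcal H^X_{[x]}=\mathcal H_{s_X(x)}$. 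Integrating Condition~(iv) of Definition~\ref{def:correspondence} over~$\mu$, and using that $G$ moves points only inside the fibres $X_u$, shows that $\mu\circ\lambda$ is $(G,\alpha)$\nb-quasi\nb-invariant on $X$ with modular function the adjoining function $\Delta$; hence the equivalent measure $\nu\circ\beta$, and therefore $\nu$ on $X/H$, is $(G,\alpha)$\nb-quasi\nb-invariant. By the $H$\nb-invariance of $\Delta$ (Remark~\ref{rem:Delta-is-right-invariant}) and Lemma~\ref{lemma:positivity-2} its modular function descends to a continuous cocycle on $G\ltimes(X/H)$, so the canonical $\Delta^{1/2}$\nb-symmetrised left\nb-translation formula defines a \Star{}representation $L_X$ of $C_c(G)$ on $L^2(X/H,\nu,\mathcal H^X)$. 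Being the integrated form of a groupoid representation over a quasi\nb-invariant measure, $L_X$ is $I$\nb-norm bounded, whence $\lVert L_X(\phi)\rVert\le\lVert\phi\rVert_{\Cst(G,\alpha)}$.

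The crux is the intertwining identity $L_X(\phi)\,\RR{f}=\RR{\phi f}$ for $\phi\in C_c(G)$ and $f\in C_c(X)$. This I would check by direct computation from the formulae~\eqref{eq:llf} for $\RR{f}$ and~\eqref{def:left-right-action} for the left action, the point being that the $H$\nb-invariance of $\Delta$ together with the transformation rule $M(x\eta)/M(x)=\delta(\eta)$ of Lemma~\ref{lemma:positivity-2} makes the factor $\Delta^{1/2}$ coming from the action and the factor $\sqrt{M}$ present in $\RR{f}$ assemble into the factors required by $L_X$ and $\RR{\phi f}$. Granting this, Lemma~\ref{lemma:positivity-4} applied to $\phi f$ yields the operator inequality
\[
L(\inpro{\phi f}{\phi f})=\RR{\phi f}^{*}\RR{\phi f}=\RR{f}^{*}L_X(\phi)^{*}L_X(\phi)\RR{f}\le\lVert\phi\rVert_{\Cst(G,\alpha)}^{2}\,\RR{f}^{*}\RR{f}=\lVert\phi\rVert_{\Cst(G,\alpha)}^{2}\,L(\inpro{f}{f})
\]
on $L^2(\base,\mu,\mathcal H)$. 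Taking the supremum of the norms over all representations of $(H,\beta)$, which computes the $\Cst(H,\beta)$\nb-norm, gives $\lVert\inpro{\phi f}{\phi f}\rVert_{\Cst(H)}\le\lVert\phi\rVert_{\Cst(G,\alpha)}^{2}\,\lVert\inpro{f}{f}\rVert_{\Cst(H)}$, that is, $\lVert\phi f\rVert_{\Hils(X)}\le\lVert\phi\rVert_{\Cst(G,\alpha)}\,\lVert f\rVert_{\Hils(X)}$. Thus each $\phi$ acts by a bounded, adjointable operator (adjoint $\phi^{*}$, by Equation~\eqref{eq:bhel-7}), and $\phi\mapsto(f\mapsto\phi f)$ is a \Star{}homomorphism $C_c(G)\to\Bound_{\Cst(H)}(\Hils(X))$ contractive for the $\Cst(G,\alpha)$\nb-norm; it therefore extends to all of $\Cst(G,\alpha)$.

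For non\nb-degeneracy I would take an approximate identity $(\phi_i)\subseteq C_c(G)$ for $\Cst(G,\alpha)$ supported near $\base[G]$ with $A(\phi_i)\to1$ and verify, by a routine inductive\nb-limit\nb-topology estimate, that $\phi_i f\to f$ in $\Hils(X)$ for every $f\in C_c(X)$; this gives $\overline{C_c(G)\cdot C_c(X)}=\Hils(X)$. The hardest part is the second and third paragraphs: establishing that $\nu$ is $(G,\alpha)$\nb-quasi\nb-invariant on $X/H$ with a modular function descending to $G\ltimes(X/H)$, so that $L_X$ really is a groupoid representation bounded by the \emph{full} $\Cst(G,\alpha)$\nb-norm, and then the measure\nb-theoretic bookkeeping in the intertwining identity, where $\Delta$, $M$ and $\delta$ must fit together exactly.
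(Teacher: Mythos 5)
Your proposal is correct in outline but takes a genuinely different route from the paper for the crucial boundedness step, and one of your intermediate claims needs more care than you give it. The paper never constructs the induced representation $L_X$ on $L^2(X/H,\nu,\mathcal{H}^X)$: it localises the pre-Hilbert module at an arbitrary state $\epsilon$ of $\Cst(H,\beta)$, observes that the resulting left action of $C_c(G)$ on the Hilbert space $\Hils(X)_\epsilon$ is non-degenerate, continuous in the inductive limit topology and involutive (the last by Equation~\eqref{eq:bhel-7}), and then invokes Proposition~4.2 of~\cite{Renault1985Representations-of-crossed-product-of-gpd-Cst-Alg} to conclude that this representation is automatically bounded by the full $\Cst(G,\alpha)$\nb-norm; letting $\epsilon$ vary gives $\inpro{\zeta f}{\zeta f}\le\norm{\zeta}^2_{\Cst(G)}\inpro{f}{f}$. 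Your route instead transfers the bound through the intertwining identity $L_X(\phi)\,\RR{f}=\RR{\phi f}$, which is essentially the induction-of-representations picture and does check out: the bookkeeping with $\Delta$, $M$ and $\delta$ closes up precisely because $\Delta$ is $H$\nb-invariant and $M(x\eta)=M(x)\delta(\eta)$. What it buys is a concrete reuse of the machinery of Section~2.1; what it costs is the step you pass over too quickly. As you define it, $L_X$ is naturally a representation of the transformation groupoid $G\ltimes(X/H)$, whose unit space is $X/H$ rather than $\base[G]$, so the assertion that it is ``the integrated form of a groupoid representation'' of $G$ and hence satisfies $\lVert L_X(\phi)\rVert\le\lVert\phi\rVert_{\Cst(G,\alpha)}$ requires either disintegrating $\nu$ along $[r_X]\colon X/H\to\base[G]$ to exhibit $L_X$ on a Hilbert bundle over $\base[G]$ with a quasi-invariant measure, or verifying the hypotheses of Renault's Proposition~4.2 for $L_X$ directly --- which is exactly the appeal the paper makes, so it cannot be avoided. (Note also that $I$\nb-norm boundedness alone does not give the full-norm bound, and that $M$ is only a measurable Radon--Nikodym derivative, so your cocycle on $G\ltimes(X/H)$ is measurable rather than continuous; that suffices.) Finally, your approximate-identity argument for non-degeneracy is workable, but the paper argues differently and more robustly for non-Hausdorff $G$: it builds a continuous family of measures with full support along $\pi_2\colon G\ltimes X\to X$ weighted by $\Delta^{1/2}$, applies Lemma~\ref{lemma:french-1.1} to get surjectivity of the associated integration map onto $C_c(X)$, and concludes density of $C_c(G)\cdot C_c(X)$ via Stone--Weierstrass.
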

\begin{proof}
Equation~\ref{eq:bhel-7} in Lemma~\ref{lemma:bhel} shows that $C_c(G)$ acts by on $C_c(X)$ by adjointable operators. We need to prove that this representation is non-degenerate. For this purpose we show that there is continuous family of measures with full support $\tilde\alpha=\{\tilde\alpha_x\}_{x\in X}$ along the projection map $\pi_2\colon G\ltimes X\to X$. Then Lemma~\ref{lemma:french-1.1} gives that $\tilde A\colon C_c(G\times_{\base[G]}X)\to C_c(X)$ is a surjection. Since, due to the theorem of Stone-Weiersta{\ss} the set $\{g\otimes h: g\in C_c(G), h\in C_c(X)\}\subseteq C_c(G\times_{\base[G]}X)$ is dense, the claim of the current lemma will be proved.
 
  Let $f\in C_c(G\times_{\base[G]}X)$ is given. For $x\in X$ define the measure $\tilde\alpha_x$ on $\pi_2\inverse(x)=G^{r_X(x)}\times\{x\}$ by \[\int_{\pi_2\inverse(x)} f\,\dd\tilde\alpha^{x}=\int_{G^{r_X(x)}} f(\eta\inverse,x)\Delta^{1/2}(\eta\inverse,x)\,\dd\alpha^{r_X(x)}(\gamma).\] Since $\alpha^{r_X(x)}$ has full support and $\Delta$ is non-zero, $\tilde\alpha^{s_{G\times_{\base[G]}X}}$ has full support. Using an argument similar to Lemma~\ref{lemma:french-1.2} we may infer that $\tilde\alpha\defeq\{\tilde\alpha_x\}_{x\in X}$ is continuous.

Finally, we check that the action is bounded. Once we prove this, then the action extends to $\Cst(G,\alpha)$. Let $\epsilon$ be a state on $\Cst(H,\beta)$. Then $\epsilon(\inpro{}{})$ makes $\Hils(X)$ into a Hilbert space, say
$\Hils(X)_{\epsilon}$. Let $V_{\epsilon}\subseteq \Hils(X)_\epsilon$ be the dense subspace space generated by $\{\zeta f: \zeta \in C_c(G),\, f\in
C_c(X)\}$. Define a representation $L$ of $C_c(G)$ on $V_{\epsilon}$ by $L(\zeta)f = \zeta f$. 
\begin{enumerate}[label=\textit{\roman*})]
  \item The representation $L$ is a non-degenerate representation of
    $C_c(G)$ on $V_{\epsilon}$. Non\nb-degenerate means that the set
    $\{\zeta f: \zeta \in C_c(G),\, f\in C_c(X)\}$ is dense in
    $V_{\epsilon}$.
   \item The continuity of the operations in Lemma~\ref{lemma:bhel} in
     the inductive limit topology implies that $L$ is continuous: for
     $f,g\in C_c(X)$, $L_{f, g}(\zeta)= \langle f, L(\zeta)g
     \rangle$ is a continuous functional on $C_c(G)$ when $C_c(G)$ is given the
     inductive limit topology. 
     \item $L$ preserves the involution, that is, $\langle {\zeta}f,
       g\rangle =\langle f, \zeta^* g\rangle$. This is proved in
       Equation~\ref{eq:bhel-7} in Lemma~\eqref{lemma:bhel}.
\end{enumerate}
Proposition $4.2$
of~\cite{Renault1985Representations-of-crossed-product-of-gpd-Cst-Alg}
says that $L$ is a representation of $G$ on $V_{\epsilon}$. Hence $L$ is bounded with respect to the norm on $\Cst(G)$. Thus $\epsilon(\inpro{\zeta f}{\zeta f}=\epsilon(\inpro{L(\zeta) f}{L(\zeta) f} \leq |\!|\zeta|\!|_{\Cst(G)}\,
\epsilon(\langle \, f, f \rangle)$ for all $f \in C_c(X)$ and $\zeta
\in C_c(G)$. The state $\epsilon$ was arbitrary. Hence for all $f \in C_c(X)$ and $\zeta \in C_c(G)$ we get
\[
\langle \,\zeta f,\zeta f \rangle \leq |\!|\zeta|\!|_{\Cst(G)}\,  \langle \, f, f \rangle.
\]
\noindent This shows that the action of $C_c(G)$ on $C_c(X)$ is bounded in the
topology induced by the norm of the inner product $\langle
\,,\rangle$. Hence the action can be extended to $\Cst(G)$.
\end{proof}

Now we are ready to state the main theorem.
\begin{theorem}\label{thm:mcorr-gives-ccorr}
Let $(G, \alpha)$ and $(H, \beta)$ be locally compact groupoids with Haar systems. Then a topological correspondence $(X, \lambda)$ from $(G, \alpha)$ to $(H, \beta)$ produces a\; $\Cst$\nb-correspondence $\Hils(X)$ from $\Cst(G,\alpha)$ to $\Cst(H,\beta)$.
\end{theorem}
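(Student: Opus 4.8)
The plan is to assemble the two constructions developed in the preceding two subsections, since the statement asks for precisely the data of Definition~\ref{def:Cst-corr}: a Hilbert $\Cst(H,\beta)$\nb-module together with a non-degenerate \Star{}representation of $\Cst(G,\alpha)$ by adjointable operators. Both ingredients are already in hand, so at this stage the proof reduces to verifying that the objects constructed earlier fit the definition.

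First I would produce the Hilbert module. Conditions (i)--(iii) of Definition~\ref{def:correspondence} say exactly that $X$ is a locally compact proper right $H$\nb-space carrying an $H$\nb-invariant continuous family of measures $\lambda$ along $s_X$; these are precisely the hypotheses of Theorem~\ref{prop:proper-space-gives-hilmod}. Hence the right inner-product module $C_c(X)$, equipped with the form of Equation~\eqref{def:inner-product} and the right action of Equation~\eqref{def:left-right-action}, completes to a $\Cst(H,\beta)$\nb-Hilbert module $\Hils(X)$. The substantive content here is the positivity of the inner product, which is Proposition~\ref{prop:inner-product-is-positive}; the remaining inner-product-module axioms are supplied by the conjugate-bilinearity and $C_c(H)$\nb-compatibility relations \eqref{eq:bhel-4}, \eqref{eq:bhel-5} and~\eqref{eq:bhel-6} of Lemma~\ref{lemma:bhel}, while the bimodule identities \eqref{eq:bhel-1}, \eqref{eq:bhel-2} and~\eqref{eq:bhel-3} record that the actions are genuine module actions.

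Next I would furnish the left representation. Condition (iv) of Definition~\ref{def:correspondence}, namely the existence of the adjoining function $\Delta$, enters through the scaling factor $\Delta^{1/2}$ in the left-action formula of Equation~\eqref{def:left-right-action}, which is exactly what makes the $C_c(G)$\nb-action interact correctly with the inner product. By Lemma~\ref{lemma:left-action-self-adj} this action of $C_c(G)$ extends to a non-degenerate \Star{}homomorphism $\Cst(G,\alpha)\to\Bound_{\Cst(H)}(\Hils(X))$; adjointability is Equation~\eqref{eq:bhel-7}, and boundedness together with non-degeneracy are the content of that lemma. Combining the two outputs yields a Hilbert $\Cst(H,\beta)$\nb-module $\Hils(X)$ carrying a non-degenerate \Star{}representation of $\Cst(G,\alpha)$, which is by Definition~\ref{def:Cst-corr} a $\Cst$\nb-correspondence from $\Cst(G,\alpha)$ to $\Cst(H,\beta)$.

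The only genuinely hard analytic steps---positivity of the inner product and boundedness of the left action---have already been isolated into Proposition~\ref{prop:inner-product-is-positive} and Lemma~\ref{lemma:left-action-self-adj}, and each of those in turn rests on the transverse-measure and representation machinery of Renault recalled in Section~\ref{cha:loc-cpt-hausdorff-gpd}. Consequently I do not expect a real obstacle in the final assembly: the proof is simply the observation that the constructed data satisfies Definition~\ref{def:Cst-corr}, so it suffices to cite Theorem~\ref{prop:proper-space-gives-hilmod} and Lemma~\ref{lemma:left-action-self-adj}.
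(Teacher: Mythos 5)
Your proposal is correct and matches the paper's own proof, which likewise just combines Theorem~\ref{prop:proper-space-gives-hilmod} and Lemma~\ref{lemma:left-action-self-adj}; you have simply spelled out in more detail how the two outputs fit Definition~\ref{def:Cst-corr}. No gaps.
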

\begin{proof}
Follows by putting Proposition~\ref{prop:proper-space-gives-hilmod}
and Lemma~\ref{lemma:left-action-self-adj} together.
\end{proof}

 \section{Examples}
\label{cha:examples-theory}

\begin{example}
  \label{exa:cont-function-as-corr}
Let $X$ and $Y$ be locally compact, Hausdorff spaces, and let $f\colon  X\to Y$ be a continuous function. We view $X$ and $Y$ as groupoids with Haar systems consisting of Dirac measures on $X$ and $Y$, $\delta_X=\{\delta_x\}_{x\in X}$ and $\delta_Y=\{\delta_y\}_{y\in Y}$, respectively. We write $X'$ for the \emph{space} $X$. We use this notation to avoid confusing the space and the groupoid structures. 

The function $f$ is the momentum map for the trivial left action of $Y$ on $X'$, that is, for $(f(x),x)\in Y\times_{\textup{Id}_Y,Y,f} X'$, $f(x)\cdot x=x$. In fact, this is the only possible action of $Y$ on $X'$. In a similar way $X$ acts on itself trivially via the momentum map $\textup{Id}_X$. The family of Dirac measures $\delta_X$ mentioned above is an $X$\nb-invariant family of measures on $X'$. Both the actions are proper. If $h\in C_c(Y\times_{\textup{Id}_Y,Y,f} X')$, then
 
\begin{multline*}
\int_{X'}\int_Y h(y,x)\,\dd(\delta_Y)^{f(a)}(y)\,\dd(\delta_X)^a(x)=h(f(a),a)\\=\int_{X'}\int_Y h(y\inverse,yx)\,\dd(\delta_Y)^{f(a)}(y)\,\dd(\delta_X)^a(x).
\end{multline*}

Therefore $\delta_X$ is $Y$\nb-invariant.
Thus $(X',\delta_X)$ is a topological correspondence from $Y$ to~$X$ with the  constant function $1$ as the adjoining function.
The action of $C_c(X)$ on $C_c(X')$ as well as the $C_c(X)$\nb-valued inner product on $C_c(X')$ are the pointwise multiplication of two functions. For $h\in C_c(Y)$, $k\in C_c(X')$, $(h\cdot k)(x)=h(f(x))k(x)$.

The $\Cst$\nb-correspondence $\Hils(X')\colon C_0(Y)\to C_0(X)$ is the $\Cst$\nb-correspondence associated with the \Star{}homomorphism $f^*\colon C_0(Y)\to \Mult(C_0(X))$ produced by the Gelfand transform.
\end{example}

\begin{example}
  \label{exa:proper-map-as-corr}
Let $X$, $Y$, $X'$ and $f$ be as in Example~\ref{exa:cont-function-as-corr}. Let $\lambda=\{\lambda_y\}_{y\in Y}$ be a continuous family of measures along $f$. It follows from the discussion in Example~\ref{exa:cont-function-as-corr} that $X$ is a proper $X$-$Y$\nb-bispace. For $h\in C_c(X\times_{\textup{Id}_X,X,\textup{Id}_X}X')$,
\begin{multline*}
\int_{X'}\int_X h(x\inverse, xz)\,\dd(\delta_X)^x(z)\,\dd\lambda_y(x)\\=\int_{X'}\int_X h(x,z)\,\dd(\delta_X)^x(z)\,\dd\lambda_y(x)=\int_{X'} h(f(x),x)\,\dd\lambda_y(x).  
\end{multline*}
The first equality above is due to the triviality of the action and the second one follows from the definition of the measures $\delta_X\times\lambda_y$ as in Remark~\ref{rem:quasi-inv-of-lambda}. Thus $\lambda$ is  $X$\nb-invariant and the modular function is the constant function $1$. Hence $(X',\lambda)$ is a correspondence from $X$ to~$Y$.
\end{example}

\begin{example}
  \label{exm:topological-quiver-top-corr}
Let $X$ and $Y$ be as in Example~\ref{exa:cont-function-as-corr}. Let $f,b\colon  X\to Y$ be continuous maps and let $\lambda$ be a continuous family of measures along $f$. Make $X$ a $Y$\nb-$Y$\nb-bispace using actions similar to those in Example~\ref{exa:cont-function-as-corr}. For $f\colon  X\to Y$ use the family of measures 
$\lambda$ and the formulae in Example~\ref{exa:proper-map-as-corr} to define a right action of $C_c(Y)$ on $C_c(X)$. It is straightforward to check that $(X,\lambda)$ is a topological correspondence from $Y$ to itself. When the spaces are second countable, the quintuple $(Y, X,s,r,\lambda)$ is called a \emph{topological quiver} \cite{Muhly-Tomforde-2005-Topological-quivers}.

In general, assume that $X,Y$ and $Z$ are locally compact Hausdorff spaces, $f\colon X\to Y$ and $b\colon X\to Z$ are maps. Let $\lambda$ be a continuous family of measures along $f$. Then $(X,\lambda)$ is a topological correspondence from $Z$ to $Y$.
\end{example}

\begin{example}
  \label{exa:gp-homo-as-corr}
Let $G$ and $H$ be locally compact groups, $\phi\colon  H\to G$ a continuous group homomorphism, and $\alpha$ and $\beta$ the Haar measures on $G$ and $H$, respectively.  The right multiplication action is a proper action of $G$ on itself. The measure $\alpha\inverse$ is invariant under this action. Using $\phi$ define an action of $H$ on $G$ as $ \eta\gamma=\phi(\eta)\gamma$ for $(\eta,\gamma)\in H\times G$. We claim that $\alpha\inverse$ is $H$\nb-quasi-invariant for this $H$\nb-action. Let $\delta_G$ and $\delta_H$ be the modular functions of $G$ and $H$, respectively. The modular functions allow to switch between the left and right invariant Haar measures $\alpha$ and $\alpha\inverse$, and $\beta$ and $\beta\inverse$. The relations are $\alpha\inverse=\delta_G\inverse\alpha$ and $\beta\inverse=\delta_H\inverse\beta$. If $R_\gamma\colon G\to G$ is the right multiplication operator, then \[\int_G R_\gamma f\,\dd\alpha=\delta_G(\gamma)\inverse\int_G f\,\dd\alpha\] for $f\in C_c(G)$. A similar equality holds for $g\in C_c(H)$. Let $f \in C_c(H \times G)$,
\begin{align*}
  &\int_G\int_H f(\eta,\phi(\eta)\inverse\gamma)\, \frac{\delta_H(\eta)}{\delta_G(\phi(\eta))} \, \dd\beta(\eta)\,\dd\alpha\inverse(\gamma)\\
 &= \int_G\int_H f(\eta\inverse,\phi(\eta)\gamma)\,\frac{1}{\delta_G(\phi(\eta))}  \;\dd\beta(\eta)\,\dd\alpha\inverse(\gamma) \quad\textup{ (by sending }\eta\textup{ to }\eta\inverse \textup{ in } H\textup{)}\\
 &= \int_G\int_H f(\eta\inverse,\gamma) \; \dd\beta(\eta)\,\dd\alpha\inverse(\gamma)  \quad\textup{ (by removing } \phi(\eta)\inverse \textup{ in } G\textup{)}.
\end{align*}

If one compares the first term of the above computation with the equation in $(iv)$ of  Definition~\ref{def:correspondence}, and uses the fact that that the adjoining function is a groupoid homomorphism, then it can be seen that $\Delta(\eta, \eta\inverse \gamma) =\frac{\delta_H(\eta)}{\delta_G\circ\phi(\eta)}$. Hence $\Delta(\eta\inverse, \gamma)=\Delta(\eta, \eta\inverse \gamma)\inverse = \frac{\delta_G\circ\phi(\eta)}{\delta_H(\eta)}$. Thus a group homomorphism $\phi\colon  H\to G$ gives a topological correspondence $(G,\alpha\inverse)$ from $(H,\beta)$ to $(G,\alpha)$ and $\frac {\delta_G\circ\phi}{\delta_H}$ is the adjoining function.
\end{example}
\begin{example}
  \label{exa:proper-gp-homo-as-corr}
Let $G$, $H$, $\alpha$, $\beta$, $\delta_H$ and $\phi$ be as in Example~\ref{exa:gp-homo-as-corr}. Additionally, assume that $\phi\colon  H\to G$ is a proper function. For the time being, assume that the action of $H$ on $G$ given by $\gamma \eta\defeq \gamma\phi(\eta)$ for $(\gamma,\eta)\in G\times H$ is proper, which is a fact and we prove it towards the end of this example. With this action of $H$ and the left multiplication action of $G$ on itself, $G$ is a proper $G$-$H$\nb-bispace. 
$\alpha\inverse$ is an $H$\nb-invariant measure. The adjoining function of this action is the constant function $1$. To see this, let $f\in C_c(G\times G)$, then
\begin{align*}
 & \iint f(\gamma\inverse,\eta)\,\dd\alpha(\gamma)\dd\alpha\inverse(\eta)\\
&= \iint f(\gamma,\eta)\,\delta_G(\gamma)\inverse\dd\alpha(\gamma)\dd\alpha\inverse(\eta) &&\textnormal{(because } \alpha\inverse=\delta\inverse \alpha\textup{)}\\
&= \iint f(\gamma,\gamma\inverse\eta)\,\dd\alpha(\gamma)\dd\alpha\inverse(\eta) &&\textnormal{(because } L_\gamma\alpha\inverse=\delta(\gamma) \alpha\inverse\textup{)}.
\end{align*}
Now we prove that the action of $G$ on $H$ is proper, that is, the map $\Psi\colon  H\times G\to H\times H$ sending $(\gamma,\eta)\mapsto (\gamma,\gamma\phi(\eta))$ is proper. The maps
\begin{align*}
\textup{Id}_H\times\phi&\colon  H\times G\to H\times H \textup{ and }\\
m&\colon  (\eta,\eta')\mapsto (\eta,\eta\eta') \textup{ from } H\times H\to H\times H
\end{align*}

are proper, and $\Psi=m\circ (\textup{Id}_H\times\phi)$. Hence $\Psi$ is proper.
\end{example}

\begin{example}\label{exa:gp-to-point-correspondence}
Let $G$ be a locally compact group and $\alpha$ the Haar measure on $G$. Let $X$ be a locally compact proper left $G$-space. Let $\lambda$ be a \emph{strongly} $G$-quasi-invariant measure on $X$, that is, there is a continuous  function $\Delta: G\times X \rightarrow \R^{+} $ such that $\dd(g\lambda)(x) = \Delta(g, x) \dd\lambda(x)$ for every $g$ in $G$. In this setting, $(X,\lambda)$ is a correspondence from $(G,\alpha)$ to $(\textup{Pt}, \delta_{\textup{Pt}})$, with $\Delta$ as the adjoining function. The $\Cst$-\nb algebra for \textup{Pt} is $\C$, the Hilbert module $\Hils(X)$ is the Hilbert space $L^2(X,\lambda)$ and the action of $\Cst(G)$ on this Hilbert module is the representation of $\Cst(G)$ obtained from the representation of $G$ on $C_c(X)$.

    An example of this situation is: when $X$ is a homogeneous space for $G$, $X$ carries a $G$-strongly quasi-invariant measure. For details, see~\cite{Folland1995Harmonic-analysis-book}*{Section 2.6}.
\end{example}

\begin{example}[Macho Stadler and O'uchi's correspondences]\label{exm:Stadler-Ouchi-correspondence}
  In~\cite{Stadler-Ouchi1999Gpd-correspondences}, Macho Stadler and O'uchi present a notion of groupoid correspondences. We change the direction of correspondence in their definition to fit our construction and reproduce the definition here:
  \begin{definition}
    A correspondence from a locally compact, Hausdorff groupoid with Haar system $(G,\alpha)$ to a groupoid with Haar system $(H,\beta)$ is a $G$-$H$\nb-bispace $X$ such that:
    \begin{enumerate}[label=\roman*)]
    \item the action of $H$ is proper and the momentum map for the right action $s_X$ is open,
    \item the action of $G$ is proper,
    \item the actions of $G$ and $H$ commute,
    \item the right momentum map induces a bijection from $G\backslash X$ to $\base$.
    \end{enumerate}
 \end{definition}
Macho Stadler and O'uchi do not assume that the left momentum map is open. We do the same. Condition (iv) above is equivalent to saying that $G\7 X$ and $\base$ are homeomorphic.

Macho Stadler and O'uchi do not require a family of measures on the $G$-$H$-bispace $X$.  We show that a correspondence of Macho Stadler and O'uchi carries a canonical $H$\nb-invariant continuous family of measures $\lambda$ which is given by \[\int_{X_{u}} f\,\dd\lambda^u\defeq \int_G f(\gamma\inverse x)\,\dd\alpha^{r_X(x)}(\gamma)\;\quad \textup{for}\;\;f\in C_c(X),\]
where $u=s_X(x)$. Note that this family of measures is the family of measures $\alpha\inverse_X$ along the quotient map $X\to G\7X$ as in Example~\ref{ex:def-of-beta-x}. Condition (iv) in the above definition identifies $G\7X\homeo \base$ to give the desired result.

 Since $\lambda$ is invariant for the $G$\nb-actions, we get $\Delta=1$. Thus $(X,\lambda)$ is a \emph{topological correspondence} from $(G,\alpha)$ to $(H,\beta)$ in our sense.

Macho Stadler and O'uchi prove that such a correspondence from $(G,\alpha)$ to $(H,\beta)$ induces a $\Cst$-correspondence from $\Cred(G, \alpha)$ to $\Cred(H,\beta)$.
\end{example}
\begin{example}[Equivalence of groupoids]
\label{exa:ME-is-top-corr}
\begin{definition}[Equivalence of groupoids, a slight modification of
  Definition 2.1~\cite{Muhly-Renault-Williams1987Gpd-equivalence}]
\label{def:gpd-equivalence}
  Let $G$ and~$H$ be locally compact groupoids. A locally compact
   space $X$ is a $G$-$H$-equivalence if
  \begin{enumerate}[label=\roman*)]
  \item $X$ is a left free and proper $G$\nb-space;
  \item $X$ is a right free and proper $H$\nb-space;
  \item the momentum maps $r_X$ and $s_X$ are open;
  \item the actions of $G$ and $H$ commute, that is, $X$ is a $G$-$H$\nb-bispace;
  \item the left momentum map $r_X\colon  X\to\base[G]$ induces a bijection of
    $X/H$ onto $\base[G]$;
  \item the right momentum map $s_X\colon  X\to\base$ induces a bijection
    of $G\backslash X$ onto $\base[H]$.
  \end{enumerate}
\end{definition}
Equivalences of Hausdorff groupoids (\cite{Muhly-Renault-Williams1987Gpd-equivalence}) are a special case of Macho Stadler-O'uchi correspondences. Hence equivalences of groupoids are topological correspondences as well. Similarly, one can check that an equivalence of locally compact groupoids defined in~\cite{Renault1985Representations-of-crossed-product-of-gpd-Cst-Alg} is also a topological correspondence.  Furthermore, an equivalence of groupoids is an invertible correspondence.
\end{example}

\begin{example}[Generalised morphisms of Buneci and Stachura]
\label{exa:buneci-stachura}
Buneci and Stachura define \emph{generalised morphisms} in~\cite{Buneci-Stachura2005Morphisms-of-lc-gpd}. We modify this definition to fit our conventions and repeat it here:
\begin{definition}
A generalised morphism from $(G,\alpha)$ to $(H,\beta)$ is a left  action $\Theta$ of $G$ on the \emph{space} $H$ with $r_{GH}$ as the anchor map, the action commutes with the right multiplication action of $H$ on itself and there is a continuous positive function $\Delta_\Theta$ on $G\times_{s_G,\base,r_{GH}}H$ such that \[\iint f(\gamma,\gamma\inverse \eta)\,\Delta_\Theta(\gamma,\gamma\inverse \eta)\,\dd\alpha^{r_{GH}(\eta)}(\gamma)\,\dd\beta_u\inverse(\eta)=\iint f(\gamma\inverse, \eta)\,\dd\alpha^{r_{GH}(\eta)}(\gamma)\dd\beta_u\inverse(\eta)\]
for all $f\in C_c(G\times_{s_G,\base,r_{GH}}H)$ and $u\in\base$.
\end{definition}

If $\Theta$ is a generalised morphism from $(G,\alpha)$ to $(H,\beta)$ then $(H, \beta\inverse)$ is a topological correspondence from $(G,\alpha)$ to $(H,\beta)$, where $\beta\inverse$ is the family of measures
\[
\int f \,\dd(\beta^{-1})_u = \int f(\eta\inverse)\,\dd\beta^u(\eta)
\]
 for $f\in C_c(H)$ and $u\in\base$. It is obvious from the definition itself that $\Delta_\Theta$ is the adjoining function for this correspondence.

In~\cite{Buneci-Stachura2005Morphisms-of-lc-gpd},  Buneci and Stachura prove that a generalised morphism induces a \Star\nb{}homomorphism from $\Cst(G,\alpha)$ to $\Mult(\Cst(H,\beta))$. This is a $\Cst$\nb-correspondence from $\Cst(G,\alpha)$ to $\Cst(H,\beta)$ with the underlying Hilbert module $\Cst(H,\beta)$.
\end{example}

\begin{example}\label{ex:actiongpd-subgpd-correspondence}
  Let $X$ be a locally compact right $G$-space for a locally compact group $G$ and let $\lambda$ be the Haar measure on $G$. Let $H$ and $K$ be subgroups of $G$. Assume that $K$ is closed  and let $\alpha$ and $\beta$ be the Haar measures on $H$ and $K$, respectively. Then $X\rtimes H$ and $X\rtimes K$ are subgroupoids of $X\rtimes G$. Denote these three transformation groupoids by $\mathbf{H}$, $\mathbf{K}$ and $\mathbf{G}$, respectively. Then $\mathbf{G}$ is an $\mathbf{H}$\nb-$\mathbf{K}$-bispace for the left and the right multiplication actions, respectively. We bestow $\mathbf{H}$ and $\mathbf{K}$ with the Haar systems $\{\alpha^y\}_{y\in X}$ and $\{\beta^z\}_{z\in X}$, respectively, where $\alpha^y=\alpha$ and $\beta^z=\beta$ for each $y,z\in X$. If $\lambda\inverse_x =\lambda\inverse$ for all $x\in X$, then the family of measures $\{\lambda_x\inverse\}_{x\in X}$ on $\mathbf{G}$ is $\mathbf{K}$\nb-invariant. We show that this family is $\mathbf{H}$\nb-quasi-invariant with the adjoining function $\delta_G/\delta_H$.

For $((x,\gamma),(y, \kappa))\in \mathbf{G}\rtimes \mathbf{K}$ we have $y=x\gamma$ and the map $(x, \gamma, y, \kappa) \mapsto (x, \gamma, \kappa)$ gives an isomorphism between the groupoids $\mathbf{G}\rtimes \mathbf{K}$ and $X\rtimes (G\times K)$. Using this identification, it can be checked that the right action of $\mathbf{K}$ on $\mathbf{G}$ is proper, which is implied by the fact that $K\subset G$ is closed. Another quicker way to see this is to observe that 
$\mathbf{K}\subseteq\mathbf{G}$ is a closed subgroupoid.

Let $f\in C_c(\mathbf{H}\ltimes\mathbf{G})$, $u\in\base[\mathbf{K}]=\base[\mathbf{G}]\homeo X$. Let $b=(u\gamma\inverse,\gamma)\in s_\mathbf{G}\inverse(u)\subseteq \mathbf{G}$. If $a=(u\gamma\inverse,\eta)\in\mathbf{H}$, then $a\inverse=(u\gamma\inverse,\eta)\inverse=(u\gamma\inverse\eta,\eta\inverse)$ is composable with $b$ and $a\inverse b=(u\gamma\inverse,\eta)\inverse(u\gamma\inverse,\gamma)=(u\gamma\inverse\eta,\eta\inverse\gamma)$. Now a computation similar to that in Example~\ref{exa:gp-homo-as-corr} shows that
  \begin{align*}
&\int_{\mathbf{G}}\int_{\mathbf{H}} f(a,a\inverse b) \,\frac{\delta_H(\eta)}{\delta_G(\eta)}\,\dd\alpha^{r_{\mathbf{G}}(b)}(a) \,\dd\lambda\inverse_{u}(b)\\
 &=\int_{\mathbf{G}}\int_{\mathbf{H}} f((u\gamma\inverse,\eta),(u\gamma\inverse\eta, \eta\inverse\gamma)) \,\frac{\delta_H(\eta)}{\delta_G(\eta)}\,\dd\alpha^{r_{\mathbf{G}}(u\gamma\inverse,\gamma)}(u\gamma\inverse,\eta) \,\dd\lambda\inverse_{u}(u\gamma\inverse,\gamma)\\
 &= \int_{G}\int_{H} f((u\gamma\inverse,\eta),(u\gamma\inverse\eta, \eta\inverse\gamma)) \,\frac{\delta_H(\eta)}{\delta_G(\eta)}\,\dd\alpha(\eta) \,\dd\lambda\inverse(\gamma)\\
 &= \int_{G}\int_{H} f((u\gamma\inverse,\eta\inverse),(u\gamma\inverse\eta\inverse, \eta\gamma)) \frac{1}{\delta_G(\eta)} \,\dd\alpha(\eta) \,\dd\lambda\inverse(\gamma)\\&\quad\textnormal{(by changing } \eta\mapsto\eta\inverse \textup{)}.
 \end{align*}
Now we change $\gamma\mapsto \eta\inverse\gamma$. Then we use the relation $\dd\lambda\inverse(\eta\inverse\gamma)=\frac{\dd\lambda\inverse(\gamma)}{\delta_G(\eta)}$ to see that the previous term equals
 \begin{align*}
 & \int_{G}\int_{H} f((u\gamma\inverse\eta,\eta\inverse), (u\gamma\inverse, \gamma)) \,\dd\alpha(\eta) \,\dd\lambda\inverse(\gamma) \\
 &= \int_{\mathbf{G}}\int_{\mathbf{H}} f((u\gamma\inverse, \eta)\inverse, (u\gamma\inverse, \gamma)) \,\dd\alpha(u\gamma\inverse) \,\dd\alpha^{r_{\mathbf{G}}(u\gamma\inverse,\gamma)}(u\gamma\inverse,\eta) \,\dd\lambda\inverse_{u}(u\gamma\inverse,\gamma)\\
&= \int_{\mathbf{G}}\int_{\mathbf{H}} f(a\inverse, b) \,\frac{\delta_H(\eta)}{\delta_G(\eta)}\,\dd\alpha^{r_{\mathbf{G}}(b)}(a) \,\dd\lambda\inverse_{u}(b).
  \end{align*}
Thus $\{\lambda\inverse_x\}_{x\in X}$ is an $\mathbf{H}$\nb-quasi-invariant family of measures on $\mathbf{G}$ with $\delta_G/\delta_H$ as the adjoining function.
\end{example}
\medskip

Let $f'\colon X\to Y$ be a proper map between locally compact spaces. Let $B\subseteq Y$, $A\subseteq {f'}\inverse(B)$ and $f\colon A\to B$ be the map obtained by restricting the domain and the codomain of $f'$. Assume that $A\subseteq X$ is closed. Then we claim that $f$ is proper. Let $K\subseteq B$ be compact, then $K$ is compact in $Y$ also. Thus it is enough to consider the case when $B=Y$, in which case $f$ is the restriction of $f'$ to the closed subspace $A$. Since the inclusion map $i_A\colon A\hookrightarrow X$ is closed,~\cite{Bourbaki1966Topologoy-Part-1}*{Chapter I, \S 10.1, Proposition 2} shows that $i_A$ is proper. Hence $f=f'\circ i_A$ is proper.
\begin{example}
[The induction correspondence]
  \label{exm:general-induction}
Let $(G,\alpha)$ be a locally compact groupoid with a Haar system, $H$ a closed subgroupoid. Let $\beta$ be a Haar system for $H$. Note that $G_{\base}$ is a $G$-$H$-bispace where the left and right actions are multiplication from the left and right, respectively. Both actions are free. We claim that the actions of $G$ and $H$ are proper.

Let $\iota\colon \base[G]\to G$ be the inclusion map which is continuous. Then $\base=\iota\inverse(H)\subseteq \base[G]$ is closed.

  Let $\Psi\colon  G_{\base}\times_{\base} H\to G_{\base}\times G_{\base}$ be the map $\Psi(x,\eta)=(x, x\eta)$. Note that $\Psi$ is obtained from the proper map $(x,\eta)\mapsto (x, x\eta)$, $G\times_{s_G,\base[G],r_G}G\to G\times G$, by restricting the domain and codomain. If we prove that $G_{\base}\times_{\base}H\subseteq G\times_{s_G,\base[G],r_G}G$ is closed, then, from the discussion preceding this example, it will follow that $\Psi$ is proper.

Since $\base\subseteq \base[G]$ is closed, $(s_G\times r_G)\inverse(\base)=G\times_{s_G,\base,r_G} G\subseteq G\times_{s_G,\base[G],r_G} G$ is closed where $s_G\times r_G\colon G\times_{s_G,\base[G],r_G} G\to \base[G]$ is the map $(\gamma,\eta)\mapsto s_G(\gamma)=r_G(\eta)$. Now the projection on the second factor $\pi_2\colon G\times_{s_G,\base,r_G} G\to G$ is a continuous, due to which $G_{\base}\times_{\base} H=\pi_2\inverse(H)\subseteq G\times_{s_G,\base,r_G} G$ is closed.

To see that the left action is proper, first note that $G_{\base}=s_G\inverse(\base)\subseteq G$ is closed. Thus $\pi_2\inverse(G_{\base})=G\times_{s_G,\base[G],r_{G_{\base}}}G_{\base}\subseteq G\times_{s_G,\base[G],r_G}G$ is closed. And then arguing same as the right action shows that the left action is also proper.

Now it is not hard to see that $G\7 X\homeo \base$. By Example~\ref{exm:Stadler-Ouchi-correspondence}, $X$ produces a topological correspondence from $(G,\alpha)$ to $(H,\beta)$.

  Though both actions are free and proper, this correspondence need not be a groupoid equivalence as it might fail to satisfy Condition~{(v)} of Definition~\ref{def:gpd-equivalence}.
\end{example}

\paragraph{\textbf{Acknowledgement:}}
I thank Jean Renault and Ralf Meyer for many fruitful discussions. I thank Ralf Meyer specially; for proofreading the document carefully. I am thankful to the EuroIndia Project of Erasmus Mundus and CNPq, Brasil, that is, the Brazilian National Council for Scientific and Technological Development, for their supports.

\begin{bibdiv}
  \begin{biblist}
  \bib{Anantharaman-Renault2000Amenable-gpd}{book}{
   author={Anantharaman-Delaroche, C.},
   author={Renault, J.},
   title={Amenable groupoids},
   series={Monographies de L'Enseignement Math\'ematique [Monographs of
   L'Enseignement Math\'ematique]},
   volume={36},
   note={With a foreword by Georges Skandalis and Appendix B by E. Germain},
   publisher={L'Enseignement Math\'ematique, Geneva},
   date={2000},
   pages={196},
   isbn={2-940264-01-5},
   review={\MR{1799683 (2001m:22005)}},
}
\bib{Bourbaki1966Topologoy-Part-1}{book}{
   author={Bourbaki, Nicolas},
   title={Elements of mathematics. General topology. Part 1},
   publisher={Hermann, Paris; Addison-Wesley Publishing Co., Reading,
   Mass.-London-Don Mills, Ont.},
   date={1966},
   pages={vii+437},
   review={\MR{0205210 (34 \#5044a)}},
}
\bib{Buneci-Stachura2005Morphisms-of-lc-gpd}{article}{
 author={Buneci, M\u{a}d\u{a}lina Roxana},
  author={Stachura, Piotr},
  title={Morphisms of locally compact groupoids endowed with Haar systems},
  status={eprint},
  date={2005},
  note={arXiv {0511613}},
}
\bib{Folland1995Harmonic-analysis-book}{book}{
   author={Folland, Gerald B.},
   title={A course in abstract harmonic analysis},
   series={Studies in Advanced Mathematics},
   publisher={CRC Press, Boca Raton, FL},
   date={1995},
   pages={x+276},
   isbn={0-8493-8490-7},
   review={\MR{1397028 (98c:43001)}},
}

\bib{Holkar-Renault2013Hypergpd-Cst-Alg}{article}{
  author={Holkar, Rohit Dilip},
  author={Renault, Jean},
  title={Hypergroupoids and $C^*$\nobreakdash-algebras},
  language={English, with English and French summaries},
  journal={C. R. Math. Acad. Sci. Paris},
  volume={351},
  date={2013},
  number={23-24},
  pages={911--914},
  issn={1631-073X},
  review={\MR{3133603}{}},
  doi={10.1016/j.crma.2013.11.003},
}
\bib{Lance1995Hilbert-modules}{book}{
   author={Lance, E. C.},
   title={Hilbert $C^*$-modules},
   series={London Mathematical Society Lecture Note Series},
   volume={210},
   note={A toolkit for operator algebraists},
   publisher={Cambridge University Press, Cambridge},
   date={1995},
   pages={x+130},
   isbn={0-521-47910-X},
   review={\MR{1325694 (96k:46100)}},
   doi={10.1017/CBO9780511526206},
}
\bib{Muhly-Renault-Williams1987Gpd-equivalence}{article}{
   author={Muhly, Paul S.},
   author={Renault, Jean N.},
   author={Williams, Dana P.},
   title={Equivalence and isomorphism for groupoid $C^\ast$-algebras},
   journal={J. Operator Theory},
   volume={17},
   date={1987},
   number={1},
   pages={3--22},
   issn={0379-4024},
   review={\MR{873460 (88h:46123)}},
}
\bib{Muhly-Tomforde-2005-Topological-quivers}{article}{
   author={Muhly, Paul S.},
   author={Tomforde, Mark},
   title={Topological quivers},
   journal={Internat. J. Math.},
   volume={16},
   date={2005},
   number={7},
   pages={693--755},
   issn={0129-167X},
   review={\MR{2158956 (2006i:46099)}},
   doi={10.1142/S0129167X05003077},
}
\bib{mythesis}{thesis}{,
  title={Topological construction of $\textup{C}^*$-correspondences for groupoid ${C}^*$-algebras},
  author={Holkar, Rohit {D}ilip},
  year={2014},
  school={{G}eorg-{A}ugust-{U}niversit{\"a}t {G}\"ottingen}
}
\bib {PatersonA1999Gpd-InverseSemigps-Operator-Alg}{book}{
   author={Paterson, Alan L. T.},
   title={Groupoids, inverse semigroups, and their operator algebras},
   series={Progress in Mathematics},
   volume={170},
   publisher={Birkh\"auser Boston, Inc., Boston, MA},
   date={1999},
   pages={xvi+274},
   isbn={0-8176-4051-7},
   review={\MR{1724106 (2001a:22003)}},
   doi={10.1007/978-1-4612-1774-9},
}
\bib{Renault1980Gpd-Cst-Alg}{book}{
   author={Renault, Jean},
   title={A groupoid approach to $C^{\ast} $-algebras},
   series={Lecture Notes in Mathematics},
   volume={793},
   publisher={Springer, Berlin},
   date={1980},
   pages={ii+160},
   isbn={3-540-09977-8},
   review={\MR{584266 (82h:46075)}},
}
\bib{Renault1985Representations-of-crossed-product-of-gpd-Cst-Alg}{article}{
   author={Renault, Jean},
   title={Repr\'esentation des produits crois\'es d'alg\`ebres de groupo\"\i
   des},
   language={French},
   journal={J. Operator Theory},
   volume={18},
   date={1987},
   number={1},
   pages={67--97},
   issn={0379-4024},
   review={\MR{912813 (89g:46108)}},
}
\bib{Renault2014Induced-rep-and-Hpgpd}{article}{
   author={Renault, Jean},
   title={Induced representations and hypergroupoids},
   journal={SIGMA Symmetry Integrability Geom. Methods Appl.},
   volume={10},
   date={2014},
   pages={Paper 057, 18},
   issn={1815-0659},
   review={\MR{3226993}},
   doi={10.3842/SIGMA.2014.057},
}
\bib{Rieffel1974Induced-rep}{article}{
   author={Rieffel, Marc A.},
   title={Induced representations of $C^{\ast} $-algebras},
   journal={Advances in Math.},
   volume={13},
   date={1974},
   pages={176--257},
   issn={0001-8708},
   review={\MR{0353003 (50 \#5489)}},
}
\bib{Stadler-Ouchi1999Gpd-correspondences}{article}{
  author={Macho Stadler, Marta},
  author={O'uchi, Moto},
  title={Correspondence of groupoid $C^*$-algebras},
  journal={J. Operator Theory},
  volume={42},
  date={1999},
  number={1},
  pages={103--119},
  issn={0379-4024},
  eprint={www.mathjournals.org/jot/1999-042-001/1999-042-001-005.pdf},
}

\bib{Tu2004NonHausdorff-gpd-proper-actions-and-K}{article}{
   author={Tu, Jean-Louis},
   title={Non-Hausdorff groupoids, proper actions and $K$-theory},
   journal={Doc. Math.},
   volume={9},
   date={2004},
   pages={565--597 (electronic)},
   issn={1431-0635},
   review={\MR{2117427 (2005h:22004)}},
}
\bib{Westman1969Gpd-cohomology}{article}{
   author={Westman, Joel J.},
   title={Cohomology for ergodic groupoids},
   journal={Trans. Amer. Math. Soc.},
   volume={146},
   date={1969},
   pages={465--471},
   issn={0002-9947},
   review={\MR{0255771 (41 \#431)}},
}
  \end{biblist}
\end{bibdiv}
\end{document}